\documentclass[11pt]{amsart}
\usepackage{setspace}
\usepackage{xcolor}
\usepackage{makecell}
\usepackage{amssymb,amscd,amsthm,verbatim,amsmath,color,fancyhdr,mathrsfs,amsfonts,amssymb,commath, graphicx,bbm}
\usepackage[bookmarks=false]{hyperref}
\usepackage[english]{babel}
\usepackage{parskip}
\usepackage{enumitem}

\usepackage[letterpaper,top=2cm,bottom=2cm,left=3cm,right=3cm,marginparwidth=1.75cm]{geometry}

\DeclareMathOperator{\Id}{\mathrm{I}_{d}}

\newcommand{\Was}[1]{\mathbb{W}_{#1}}

\newcommand{\Cov}{\mathrm{Cov}}

\DeclareMathOperator*{\argmin}{argmin}

\newcommand{\cP}{\mathcal{P}}

\newcommand{\vol}{\mathrm{vol}}
\newcommand{\diffref}{\mathbf{m}}
\newcommand{\ricci}{\mathrm{Ric}}

\newcommand{\Tr}{\text{Tr}}

\newcommand{\Schro}{\text{Schr\"{o}dinger}}

\newcommand{\Exp}[1]{\mathrm{E}_{#1}}

\newtheorem{remark}{Remark}

\newtheorem{definition}{Definition}

\newtheorem{theorem}{Theorem}
\newtheorem{assumption}{Assumption}
\newtheorem{proposition}{Proposition}

\newtheorem{mainresult}{Main Result}

\newcommand{\commentout}[1]{}

\title[Noising-Denoising by Large Temperature Schr\"{o}dinger Bridges]{Noising-Denoising by Large Temperature Schr\"{o}dinger Bridges}
\author{Garrett Mulcahy}
\address{Garrett Mulcahy\\ Department of Mathematics \\ University of Washington\\ Seattle, WA 98195, USA \\ {Email: gmulcahy@uw.edu}}

\keywords{Schr\"odinger bridges, denoising diffusion models, optimal transport}
	
\subjclass[2020]{49N99, 49Q22, 60J60}

\thanks{This research is partially supported by the following grants: NSF DMS-2502281, 2133244, 2052239 and PIMS PRN-01 Kantorovich Initiative. Many thanks to Soumik Pal for helpful discussion and comments.}

\begin{document}

\begin{abstract}
In this note, we establish a connection between denoising diffusion models and large temperature dynamic Schr\"{o}dinger bridges computed with respect to certain reference processes. In particular, we show that in the large temperature regime (i.e.\ $T \to +\infty$), the Schr\"{o}dinger bridge behaves like a forward time diffusion process over the time interval $[0,T/2]$, and approximately like the time reversal of a diffusion process over $[T/2,T]$. This construction provides a process-level characterization of the gradient flow approximation developed by \cite{clerc23-longtime}.  
\end{abstract}
\maketitle

\section{Introduction}
Noising-denoising diffusion models are a dominant paradigm in generative modeling. Introduced and first investigated in the works \cite{sohl-dickstein15,song2019Generative,song2020score,ho2020denoising}, the idea is the following. First, one begins with samples from a distribution and then runs, given this initial data, a diffusion with a known mixing rate. This is called the \textbf{noising} step. The most popular choice of diffusion is the Ornstein-Uhlenbeck (OU) process, 
\begin{align}\label{eq:OU}\tag{OU}
    dX_t &= -\frac{1}{2}X_t dt + dB_t,
\end{align}
which has stationary distribution $N(0,\Id)$, where $\Id$ denotes the $d \times d$ identity matrix. At training time, one fixes a large time horizon $T > 0$ and learns via a neural network an approximation to the score function of each marginal time point. At inference time, i.e.\ when one wishes to generate a sample, one performs a time reversal and transforms samples from the stationary distribution into samples from the initial distribution. This step is called \textbf{denoising}.

In this note, we argue that the \textbf{large temperature $\Schro$ bridge}, a solution to a certain entropy minimization problem (see the definition in \eqref{def:sb-dyn-ou}), behaves like a noising-denoising diffusion model up to a reasonable precision. As we elaborate in Section \ref{subsec:sb-versus-denoising}, the large temperature $\Schro$ bridge may present both theoretical and computational advantages compared to the dominant noising-denoising diffusion models. For the sake of the Introduction, we will introduce our main ideas on $\mathbb{R}^{d}$ equipped with the OU process defined in \eqref{eq:OU}. However, our results also extend to the compact manifold setting. Note that denoising diffusion models have been considered in the manifold setting in \cite{debortoli2022riemannian}. We will fully introduce those ideas and notation later when they become relevant in Sections \ref{sec:preliminaries} and \ref{sec:manifold}.

Fix $T > 0$. We call this parameter temperature and will take it to be large in this note. Let $R^{T} = \mathrm{Law}(X_{t}, t \in [0,T])$, where $(X_t, t \geq 0)$ satisfies \eqref{eq:OU} with initial distribution $X_0 \sim N(0,\Id)$, i.e.\ with initial distribution given by its stationary distribution. We will set $\diffref$ to be $N(0,\Id)$, and we will call it the \textbf{reference measure} and  \eqref{eq:OU} the \textbf{reference process}. Let $H(\cdot|\cdot)$ denote the relative entropy (i.e.\ Kullback-Leibler divergence) between two probability measures. Given two probability measures $\mu,\nu \in \cP(\mathbb{R}^{d})$ with finite second moments, the (dynamic) $T$-\textbf{$\Schro$ bridge} from $\mu$ to $\nu$ with respect to the \eqref{eq:OU} reference process is the solution to the following optimization problem,
\begin{align}\label{def:sb-dyn-ou}
    P^{T} := \argmin \limits_{P \in \Pi(\mu,\nu)} H(P|R^{T}),
\end{align}
where $\Pi(\mu,\nu)$ is the set of all probability measures on the space of continuous paths (continuous functions from $[0,T]$ to $\mathbb{R}^{d}$), denoted $C([0,T];\mathbb{R}^{d})$, such that the time zero marginal is $\mu$ and the time $T$ marginal is $\nu$. We denote the coordinate process by $(x_t, t \in [0,T])$. 

\textbf{Our construction.} We construct an explicit approximation for $P^{T}$ based on a noising-denoising framework. In other words, we construct a path measure in $\Pi(\mu,\nu)$ that behaves like a forward time diffusion on $[0,T/2]$, and then approximately like the time reversal of a diffusion on $[T/2,T]$. 

The components are as follows. Let $(r_{t}(\cdot,\cdot), t \geq 0)$ denote the transition densities with respect to the stationary measure $\diffref(dy)$ of the reference dynamics. We denote the corresponding semigroup by $(R_t,t \geq 0)$ (precise definitions are given in Section \ref{sec:preliminaries}). As the reference dynamics are currently given by \eqref{eq:OU},
\begin{align}\label{eq:ou-trans-dens}
    r_{t}(x,y) &= \frac{1}{(1-e^{-t})^{d/2}}\exp\left(-\frac{\norm{x}^2-2e^{t/2}\langle x,y\rangle + \norm{y}^2}{2(e^{t}-1)}\right).
\end{align}
Next, define the Radon-Nikodym derivatives of the marginals with respect to $\diffref$ as
\begin{align}
    \rho := d\mu/d\diffref, \quad \sigma := d\nu/d\diffref.
\end{align}
First, start \eqref{eq:OU} with $X_0 \sim \mu$ and run it for time $T/2$. This is the \textbf{``noising step''}. The joint distribution at times $0$ and $T/2$ is denoted $\alpha_{T/2}$ and is given by
\begin{align*}
    \frac{d\alpha_{T/2}}{d(\diffref \otimes \diffref)}(x_0,x_{T/2}) &= \rho(x_0) r_{T/2}(x_0,x_{T/2}).
\end{align*}
Now, do the same procedure but with initial distribution $\nu$. Call this joint distribution $\beta_{T/2}$. We now compute the conditional distribution of the position at time $0$ given the position at time $T/2$. This is the \textbf{``denoising step''}, and the density is given by
\begin{align*}
    \frac{d\beta_{T/2}(\cdot|y_{T/2})}{d\diffref}(y_0) &= \frac{\sigma(y_0) r_{T/2}(y_0,y_{T/2})}{\int_{\mathbb{R}^{d}} \sigma(y_0) r_{T/2}(y_0,y_{T/2}) \diffref(dy_0)} = \frac{\sigma(y_0)r_{T/2}(y_0,y_{T/2})}{R_{T/2}\sigma (y_{T/2})}. 
\end{align*}
Lastly, let $\nabla \varphi_{T/2}$ denote the quadratic cost optimal transport map from $R_{T/2}^*\mu$ to $R_{T/2}^*\nu$, defined later in Section \ref{subsec:ot-prelim}. This map is guaranteed to exist by \cite{THEbrenier}.

The construction is then the following: start \eqref{eq:OU} with $X_0 \sim \mu$ and run for time $T/2$ (noising). Then, pushforward the $T/2$ coordinate by $\nabla \varphi_{T/2}$ and run the time reversal of \eqref{eq:OU} started from $\nu$ (denoising). In other words, we have a triplet $(X_0,X_{T/2},X_{T})$ such that the joint distribution has density with respect to $\diffref^{\otimes 3}$ equal to
\begin{align}\label{eq:triplet-ou}
    \rho(x_0) r_{T/2}(x_0,x_{T/2}) \cdot \frac{\sigma(x_T)r_{T/2}(x_T,\nabla \varphi_{T/2}(x_{T/2}))}{R_{T/2}\sigma (\nabla \varphi_{T/2}(x_{T/2}))}
\end{align}
From this joint density, we define a path measure $Q^T$ with Radon-Nikodym derivative
\begin{align}\label{defn:diffusion-approx}
    \frac{dQ^{T}}{dR^{T}}(x) &:= \frac{\rho(x_0)\sigma(x_T)}{R_{T/2}\sigma(\nabla \varphi_{T/2}(x_{T/2}))} \cdot \frac{r_{T/2}(\nabla \varphi_{T/2}(x_{T/2}),x_{T})}{r_{T/2}(x_{T/2},x_{T})}
\end{align}
By construction, observe that $Q^{T} \in \Pi(\mu,\nu)$ and thus is an admissible candidate for the $\Schro$ bridge. Importantly, observe that $Q^{T}$ behaves like a ``noising'' process over the time interval $[0,T/2]$, and approximately like a ``denoising'' process over $[T/2,T]$. Note that over $[T/2,T]$ the process is not exactly the time reversal of the gradient flow started from $\nu$ due to the pushforward correction. A pictorial representation of $Q^{T}$ is given in Figure \ref{fig:large-time-schematic}. 

Note that when $\mu =\nu$ the map $\nabla \varphi_{T/2}$ is the identity, so the construction becomes much simpler. In fact, in this case the process over $[T/2,T]$ is exactly the time reversal of the gradient flow started from $\nu$. In general though, for large $T$ the probability measures $R_{T/2}^*\mu$ and $R_{T/2}^*\nu$ are both very close to the stationary measure $\diffref$ (see discussion of the turnpike property in Section \ref{subsec:related-works}). Thus, intuitively $\nabla \varphi_{T/2}$ is very close to the identity map for large values of $T$, i.e.\ there is only a slight modification at time $T/2$. 

\begin{figure}
    \centering
    \includegraphics[width=1.0\linewidth]{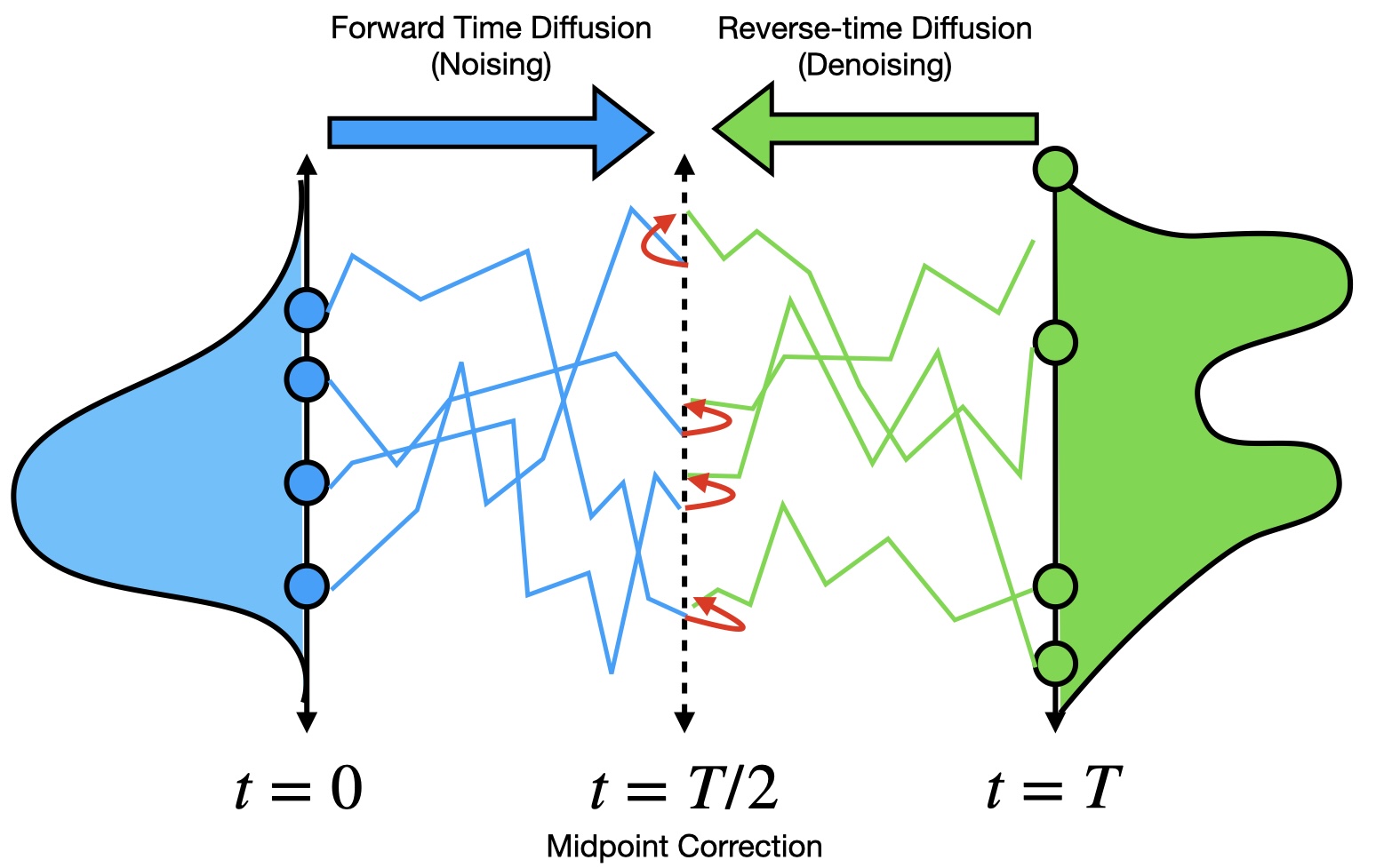}
    \caption{Sketch of the two step Markov chain $(X_0,X_{T/2},X_{T})$ from which $Q^{T}$ is derived in \eqref{defn:diffusion-approx}.}
    \label{fig:large-time-schematic}
\end{figure}

We summarize our main results as follows. 
\begin{mainresult}[Informal Statement of Theorems \ref{thm:euclidean-case} and \ref{thm:compact-manifold}]\label{mainresult}
    Let $P^{T}$ and $Q^{T}$ be as defined in \eqref{def:sb-dyn-ou} and \eqref{defn:diffusion-approx}. Under assumptions on $\mu$ and $\nu$ stated in Assumption \ref{assumption},
    \begin{align}\label{eq:sym-rel-informal}
        \limsup\limits_{T \to +\infty} \exp(T/2)\left(H(Q^{T}|P^{T})+H(P^{T}|Q^{T})\right) <+\infty. 
    \end{align}
    An analogous result also holds on compact manifolds $(M,g)$. In this case the exponential rate is governed by the spectral decomposition of the reference process.
\end{mainresult}
We also establish that this convergence rate cannot in general be improved, see Theorem \ref{thm:compact-manifold-same}. Note that by the information processing inequality, Main Result \ref{mainresult} provides a decay rate for the symmetric relative entropy of the joint distribution of any finite collection of time marginals of $Q^{T}$ and $P^{T}$. In particular, integrating \eqref{eq:triplet-ou} with respect to $\diffref(dx_{T/2})$ results in an explicit coupling of $\mu$ and $\nu$ that approximates the static $\Schro$ bridge (i.e.\ $(x_0,x_{T})_{\#}P^{T}$) with a decay rate that is no worse than that given in Main Result \ref{mainresult}. 

Lastly, we remark that Main Result \ref{mainresult} hints at the following large time structural result for $\Schro$ bridges. It says that the $\Schro$ bridge from $\mu$ to $\nu$ for large $T$ can be approximately obtained by combining the $\Schro$ bridges from $\mu$ to $\diffref$ and $\diffref$ to $\nu$, both computed at temperature $T/2$. In other words, at large temperature a semigroup-like property emerges in which $\Schro$ bridges with one marginal equal to the reference measure $\diffref$ play a crucial role. This is a large temperature result complementary to one developed in the small temperature regime established in \cite[Theorem 3]{AHMP25} and \cite[Theorem 2]{MP25}. For a rigorous statement and proof, see Theorem \ref{thm:semigrp-large-temp}.

\subsection{$\Schro$ Bridge and Denoising Diffusion Models}\label{subsec:sb-versus-denoising} 
The $\Schro$ bridge has been employed in a generative modeling context by works such as \cite{doucet2021diffusion,wang-dl-sb-21,tong-sbsim-24,shi2023diffusion,peluchetti-23,doucet-unpaired-24}. We note that \cite{zhang-sb-ouformula-2026,bunne-sbformula-26} specifically consider the $\Schro$ bridge with \eqref{eq:OU} reference in their analyses. In general, the $\Schro$ bridge possesses a body of theoretical and practical results that may make it a more desirable alternative to the more popular noising-denoising diffusion models in certain contexts. Many of these points are raised in \cite{doucet2021diffusion}. For instance, in the time-reversal denoising step, there is a truncation error arising from the fact that one begins with a sample from the stationary distribution, even though the stationary distribution is never attained in finite time. This issue is not present with the $\Schro$ bridge as the noising and denoising steps are performed at the same time. 

We highlight this comparison specifically in the same marginal case. As mentioned earlier, when $\mu = \nu$ the construction $Q^{T}$ in \eqref{defn:diffusion-approx} can be most faithfully interpreted as an approximate noising-denoising process as $\nabla \varphi_{T/2} = \Id$. In this case, $P^{T}$ and $Q^{T}$ are similar to so-called mirror interpolations as defined in \cite[Section 3.3]{ABV25}. This allows us to use algorithms for computing $P^{T}$ to obtain a stochastic interpolation that approximately operates as a noising-denoising model at large temperatures.

To compute $\Schro$ bridges, there are two algorithms with well-studied convergence theory: the Sinkhorn algorithm (Iterative Proportional Fitting Procedure) \cite{cuturi2013sinkhorn,sinkhorn-OG}, and the  Iterative Markov Fitting (IMF) algorithm \cite{shi2023diffusion,peluchetti-23}. Both algorithms are known to produce exponential convergence (with respect to the number of iterations) under assumptions related to log-concavity of the marginals, especially under large regularization (i.e.\ large $T$). For the Sinkhorn algorithm see \cite[Theorem 1.3, Proposition 1.11]{CCGT25} and example gallery therein; for IMF, see \cite[Theorems 1 and 2]{SCD-dsb-25} (although these results only apply to the Euclidean setting). There is a vast literature establishing convergence rates of the Sinkhorn algorithm in more general settings, see the recent works \cite{gn-sinkhorn-rate26,chizat-sinkconv26,cdg-sinkhkorn-26,eck-conv25} and references therein.  

At the same time, these theoretical convergence results do not guarantee strong performance in practical settings of interest. For instance, the implementation of IMF requires fitting a neural network to compute the Markov projection. Similarly for score-based denoising diffusion models, the ability of the neural network to efficiently estimate the score function is key to guaranteeing strong model performance \cite{chen2023sampling,chen2023improved,lee2023convergence,conforti2025kl}. We hope that the theoretical results of this note will encourage more investigation into the comparisons and tradeoffs of using \textbf{large} temperature $\Schro$ bridges as opposed to denoising diffusion models in generative modeling applications.

\subsection{Related Works}\label{subsec:related-works}
As $T \to +\infty$, it is known in various senses that the joint distribution at time $0$ and time $T$ of $P^{T}$ converges to the independent coupling $\mu \otimes \nu$. Additionally, there is interest in understanding the convergence of time points along the \textbf{entropic interpolation}, i.e.\ the curve of measures traced out by the time marginals of $P^{T}$, to the stationary measure $\diffref$. Set $\mu_{t}^{T} := (x_t)_{\#}P^{T}$, then we write $(\mu_{t}^{T},t \in [0,T])$ to denote the entropic interpolation. More precise descriptions of the large time limiting behavior of the $\Schro$ bridge have been given in works such as \cite{conforti-second-order-sb19,conforti21deriv,clerc23-longtime,tamanini22-costa}. 

\textbf{Curvature Dimension Condition.} An important point of contrast between the results of this note and previous works is the assumption that the reference process satisfies a curvature dimension condition. Using the manifold notions introduced in Section \ref{sec:preliminaries}, the definition is as follows. 
\begin{definition}[Curvature Dimension Condition]
    For $\kappa \in \mathbb{R}$, the triplet $(M,g,e^{-U}\vol)$ satisfies
$\text{CD}(\kappa,\infty)$ if the following inequality holds for all $x \in M$ 
\begin{align}\label{eq:cd-defn}\tag{CD($\kappa,\infty$)}
    \ricci + \mathrm{Hess}(U) \geq \kappa g. 
\end{align}
\end{definition}
Here, $\ricci_{x}$ denotes the Ricci curvature tensor at $x$. By associating the diffusion process defined later in \eqref{eq:reference-process-intro} with its stationary measure, we will also say that the process \eqref{eq:reference-process-intro} satisfies $\text{CD}(\kappa,\infty)$. Crucially, the standard Gaussian on $\mathbb{R}^{d}$, i.e.\ the stationary measure of \eqref{eq:OU}, satisfies $\text{CD}(1,\infty)$. This is an important condition that establishes many functional inequalities related to the convergence of the associated Markov process to its equilibrium measure, see \cite{bgl-markov} (although pay attention to differing constant conventions). 

While we will use the fact that $\mathbb{R}^{d}$ equipped with \eqref{eq:OU} satisfies $\mathrm{CD}(1,\infty)$, our proofs in the compact manifold setting will entirely avoid this assumption in favor of spectral arguments, see Section \ref{subsec:spectral-analysis} for more details. To compare previous results with ours, the key bridge between spectral methods and the curvature dimension condition is the following. Let $\lambda_1$ denote the so-called spectral gap of the spectral decomposition of $L^2(\diffref)$, i.e.\ the first nonzero eigenvalue of the (negative) generator of the reference process (defined later in \eqref{defn:generator-of-diff}). If the reference process also satisfies $\mathrm{CD}(\kappa,\infty)$ for some $\kappa \in \mathbb{R}$, then 
\begin{align}\label{remark:cd-lambda}
    \text{it holds that $\lambda_1 \geq \kappa/2$.}
\end{align}
As \eqref{eq:OU} satisfies $\text{CD}(1,\infty)$, in this case the inequality in \eqref{remark:cd-lambda} is an equality. In general though this inequality can be strict. For instance, take the sphere $\mathbb{S}^{d}$ equipped with Brownian motion reference process (i.e.\ its generator is $\frac{1}{2}\Delta_{\mathbb{S}^{d}}$). In this case, the optimal value for $\kappa$ is $d-1$ as $\ricci = (d-1)g$, whereas $\lambda_1 = d/2$. 

We stress that in the general context of Markov processes, especially on non-compact spaces and outside the $\mathrm{CD}(\kappa,\infty)$ with $\kappa > 0$ setting, the spectrum of the generator is in general \textbf{not} discrete \cite{milman18-spectral}. However, we work with the spectral setting specifically because it is sufficiently general to capture a connection with noising-denoising diffusion models. 

\textbf{Entropic Cost and Interpolation.} Under a general curvature dimension assumption and minor additional assumptions, \cite[Theorem 1.4]{conforti21deriv} establishes the following inequality for all $T > 0$ under $\mathrm{CD}(\kappa,\infty)$ with $\kappa > 0$ and $\mu,\nu$ having bounded compactly supported densities with respect to $\diffref$,
\begin{align}\label{eq:conv-rate-ct}
    \abs{H(P^{T}|R^{T})-\left(H(\mu|\diffref)+H(\nu|\diffref)\right)} \leq \frac{2}{\exp(\kappa T/2)-1}(H(\mu|\diffref)+H(\nu|\diffref)).
\end{align}
Moreover, the same theorem establishes that this exponential rate of $\kappa/2$ in general cannot be improved. We also use many large temperature estimates and results from \cite{conforti21deriv} that we summarize later in Proposition \ref{prop:schro-limit-facts}. Additionally, \cite[Theorem 1.4]{conforti-second-order-sb19} and \cite[Equation (2.20)]{conforti21deriv} establish convergence rates in relative entropy of each time point $\mu_{t}^{T}$ along the entropic interpolation to the reference measure $\diffref$ as $T \to +\infty$. To be precise, it is shown under the same assumptions as \eqref{eq:conv-rate-ct} that for all $T >0$ and $t \in [0,T]$
\begin{align}\label{eq:rel-ent-equilibrium-conv}
    H(\mu_t^{T}|\diffref) &\leq \frac{1-\exp(-\kappa (T-t))}{1-\exp(-\kappa T)}H(\mu|\diffref) + \frac{1-\exp(-\kappa t)}{1-\exp(-\kappa T)}H(\nu|\diffref) \\
    &- \frac{\cosh(\kappa T/2)-\cosh(\kappa (t-T/2))}{\sinh(\kappa T/2)}H(P^{T}|R^{T}). 
\end{align}
Indeed, results similar to Theorems \ref{thm:euclidean-case} and \ref{thm:compact-manifold} can be established on the basis of the inequalities developed in \eqref{eq:rel-ent-equilibrium-conv} in terms of $\kappa$ instead of $\lambda_1$. However, we establish our results by relying instead on the spectral decomposition of $L^2(\diffref)$ available in our setting. 

\textbf{Large Temperature $\Schro$ Bridge and Gradient Flows.} This work is heavily inspired by the large temperature $\Schro$ bridge analysis done in \cite{clerc23-longtime}. The main result of \cite{clerc23-longtime} is a bound on the $\Was{2}$ distance between each time point $\mu_{t}^{T}$ along the entropic interpolation and the distribution of the corresponding time point of the reference dynamics with initial measure $\mu$, i.e.\ $R_t^*\mu$, in both the $\mathrm{CD}(\kappa,\infty)$ and $\mathrm{CD}(0,n)$ settings. The curve of measures $(R_t^*\mu, t \geq 0)$ is the Wasserstein gradient flow (curve of steepest descent) of the functional $H(\cdot|\diffref)$ started from $\mu$, see \cite{ambrosio2005gradient} for more on this characterization. In our time scaling and $\mathrm{CD}(\kappa,\infty)$ setting, when $\mu,\nu$ have compactly supported, smooth densities with respect to $\diffref$, \cite[Theorem 4.2]{clerc23-longtime} gives for all $t \in [0,T)$ 
\begin{align}\label{eq:clerc-bdd}
     \Was{2}(\mu_t^{T}, R_t^* \mu) &\leq C\frac{t\exp(-\kappa T/2)}{\sqrt{\exp(-\kappa t)-\exp(-\kappa T)}}.
\end{align}
Observe that this bound explodes as $t \to T$, so this bound is best understood as fixing $t \in (0,T)$ and then sending $T \to +\infty$.

By comparison, our Main Result \ref{mainresult} provides a process-level approximation to the large temperature $\Schro$ bridge at each temperature $T > 0$. By the information processing inequality, Main Result \ref{mainresult} implies a uniform bound over all $t \in [0,T]$ of the relative entropy of $\mu_{t}^{T}$ with respect to the time $t$ marginal of $Q^T$ defined in \eqref{defn:diffusion-approx}. Indeed, the motivation of our construction was to incorporate the gradient flow intuition over $[0,T/2]$ of \eqref{eq:clerc-bdd} in a way that respected both the initial and terminal endpoint distributions. 

Lastly, we wish to emphasize the role of the time $T/2$ and its relation to the \textbf{turnpike property}, an important phenomenon in stochastic control \cite{trelat-turnpike-15,conforti-hjb-turnpike} that has been studied in the context of the $\Schro$ bridge problem as well as its kinetic and mean field variants in \cite{clerc23-longtime,entropic-turnpike-22,conforti21deriv,mean-field-sb-turnpike20}. The intuition provided by this phenomenon states that at large time, the dynamics of the $\Schro$ bridge away from its initial and terminal time endpoints is governed by the reference dynamics. To be more concrete, for some $\delta > 0$ and large $T >0$, the dynamics of $P^T$ restricted to $[T/2-\delta,T/2+\delta]$ should be very close to those of $R^T$ over the same interval. The presence of such behavior along the entropic interpolation is quantified in the results \cite[Sections 4.2.1, 4.3.3]{clerc23-longtime}. For our analysis, this phenomenon manifests in the fact that at time $T/2$, the time marginal of the $\Schro$ bridge becomes very close to the stationary measure $\diffref$ (see Proposition \ref{prop:chi-sqrd-bdd}).

\subsection{Outline of Paper.} In Section \ref{sec:preliminaries} we introduce the essential preliminary elements to prove our results. Section \ref{sec:ou-diff} proves the main diffusion approximation result when the ambient space is $\mathbb{R}^{d}$ and the reference process is \eqref{eq:OU}. An analogous result for compact manifolds is then developed in Section \ref{sec:manifold}. Most proofs are relegated to the Appendix. 

\subsection{AI Disclosure.} The author acknowledges the use of ChatGPT 5.6 Sol in the preparation and editing of the manuscript. ChatGPT identified and fixed an error in a first draft, specifically regarding the connection between the $\mathrm{CD}(\kappa,\infty)$ condition and the spectrum of the generator of the reference process. It identified literature establishing the Wasserstein decay in Proposition \ref{prop:bgl-9-7-2} in the spectral setting and performed the Gaussian computation after Theorem \ref{thm:euclidean-case} (which the author then verified). Additionally, it identified and corrected minor errors, gaps, and typos in the proofs. The construction of the diffusion approximation and all theorem statements were original ideas of the author, who wrote the manuscript and retains all responsibility for correctness of the results.

\section{Preliminaries}\label{sec:preliminaries}
We now introduce the necessary preliminary information and notation to properly state and prove the results of this paper. 

\subsection{Diffusions on Manifolds}
Let $(M,g)$ denote a smooth ($C^{\infty}$), connected, complete Riemannian manifold without boundary, equipped with the Levi-Civita connection denoted $\nabla$. Let $d: M \times M \to [0,+\infty)$ denote the distance on $M$ induced by the metric. In Section \ref{sec:ou-diff}, $M = \mathbb{R}^{d}$ with its standard Euclidean metric, and in Section \ref{sec:manifold} we take $M$ to be compact. Lastly, let $C([0,T],M)$ denote the set of continuous functions $[0,T] \to M$, and denote the coordinate process by $(x_t, t \in [0,T])$.

To define the $\Schro$ bridge on $(M,g)$ compact, we must specify a reference process. For a proper introduction to diffusion processes on manifolds, we refer readers to \cite{hsu-stoch-analysis-manifold,bgl-markov}. The reference processes we consider have the following form. Let $(B_t^{M}, t \geq 0)$ denote Brownian motion on $M$, and let $U \in C^2(M)$ be a potential. The following manifold-valued SDE,
\begin{equation}\label{eq:reference-process-intro}
    dX_t = -\frac{1}{2}\nabla U(X_t) dt + dB_t^{M},
\end{equation}
is reversible with stationary measure $e^{-U(x)}\vol(dx)$, where $\vol(dx)$ is the volume measure on $M$. The reference measure is defined as 
\begin{align}\label{eq:ref-measure}
    \diffref(dx) := \exp(-U(x))\vol(dx),
\end{align}
where we insist that $U$ in \eqref{eq:reference-process-intro} is chosen such that $\diffref$ is a probability measure. 
We denote the semigroup corresponding to \eqref{eq:reference-process-intro} by $(R_t, t \geq 0)$, defined for $f \in L^2(\diffref)$ by
\begin{align}
    R_t f(x) &= \Exp{}\left[f(X_{t})|X_0 = x\right].
\end{align}
For each $t > 0$ and $x \in M$, the transition kernel $r_{t}(x,\cdot)$ is the Radon-Nikodym derivative of the law of $X_t|X_0 = x$ with respect to $\diffref$, so that
\begin{align}
    R_t f(x) &= \int_{M} f(y) r_{t}(x,y) \diffref(dy). 
\end{align}
By reversibility, each $r_t$ is symmetric in its arguments. Note that the adjoint of the semigroup acts on measures. In particular, if $d\mu/d\diffref = \rho$ then $R_{t}^*\mu(dx) = (R_{t}\rho)\diffref(dx)$.

\subsection{$\Schro$ Bridge.} We define the $\Schro$ bridge on manifolds. Analogous to \eqref{def:sb-dyn-ou}, fix $T > 0$ and set $R^{T} = \mathrm{Law}(X_t, t \in [0,T])$ where $(X_t, t \geq 0)$ satisfies \eqref{eq:reference-process-intro} with $X_0 \sim \diffref$. The working assumption for the paper is the following.
\begin{assumption}\label{assumption}
    Let $(X_t, t \geq 0)$ be a reference process that is either (1) \eqref{eq:OU} when $M = \mathbb{R}^{d}$ or (2) as defined in \eqref{eq:reference-process-intro} with $U \in C^2(M)$ when $M$ is compact.
    Let $\mu, \nu \in \cP_2(M)$ with $H(\mu|\diffref), H(\nu|\diffref)$ finite. Let $\rho = d\mu/d\diffref$ and $\sigma = d\nu/d\diffref$ be such that $\rho, \sigma \in L^{\infty}(\diffref)$ with compact support. 
\end{assumption}
The (dynamic) T-$\Schro$ bridge from $\mu$ to $\nu$ is defined as
\begin{align}\label{eq:dynam-schro-bridge-defn}
     P^T := \argmin \limits_{P \in \Pi(\mu,\nu)} H(P|R^{T}).
\end{align}
The quantity $H(P^{T}|R^{T})$ is called the \textbf{entropic cost}. By \cite[Proposition 2.5]{schroLeonard13}, under Assumption \ref{assumption} the $\Schro$ bridge defined in \eqref{eq:dynam-schro-bridge-defn} exists, is unique, and possesses an $(f,g)$-decomposition. That is, there exists $a^{T},b^{T}: M \to [0,+\infty)$ measurable such that 
\begin{align}\label{eq:product-decomp}
    \frac{dP^{T}}{dR^{T}}(x) = a^{T}(x_0)b^{T}(x_{T}). 
\end{align}
Following the convention of \cite{conforti21deriv}, we insist that $\norm{b^{T}}_{L^1(\diffref)} = 1$ so that the pair $(a^{T},b^{T})$ is unique. We recall the following results, noting that \cite{conforti21deriv} proves them in a setting that includes, for instance, when the reference process satisfies $\mathrm{CD}(\kappa,\infty)$ for some $\kappa > 0$. This is satisfied when $\mathbb{R}^{d}$ is equipped with \eqref{eq:OU} reference. However, we wish to avoid curvature dimension assumptions in the compact manifold case, so we present an elementary proof of these results in the Appendix in Section \ref{subsec:compact-manifold-proofs}.
\begin{proposition}[Lemmas 2.1 and 3.6 in \cite{conforti21deriv}]\label{prop:schro-limit-facts}
    Under Assumption \ref{assumption}, $a^{T} \to \rho$ and $b^{T} \to \sigma$ in $L^2(\diffref)$ as $T \to +\infty$. Moreover, for all $T_0 > 0$ there exists $c_{T_0} > 0$ such that for all $T \geq T_0$
    \begin{align*}
        \norm{a^T}_{L^{\infty}(\diffref)} \leq \frac{1}{c_{T_0}}\norm{\rho}_{L^{\infty}(\diffref)}, \quad \norm{b^T}_{L^{\infty}(\diffref)}\norm{a^T}_{L^{1}(\diffref)} \leq \frac{1}{c_{T_0}}\norm{\sigma}_{L^{\infty}(\diffref)}.
    \end{align*}
\end{proposition}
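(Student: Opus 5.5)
We work from the Schrödinger system satisfied by the $(f,g)$-decomposition \eqref{eq:product-decomp}. Since $P^T\in\Pi(\mu,\nu)$, the marginal constraints read
\begin{align*}
a^T \, R_T b^T = \rho, \qquad b^T\, R_T a^T = \sigma \qquad \diffref\text{-a.e.}
\end{align*}
Spectrally, $R_T f = \int f\,d\diffref + \mathcal{R}_T f$, where $\mathcal{R}_T$ acts on the mean-zero part with $\norm{\mathcal{R}_T}_{L^2\to L^2}\le e^{-\lambda_1 T}$. On compact $M$ this comes from the discrete spectrum of the generator. For \eqref{eq:OU} it comes from the Hermite decomposition, with $\lambda_1=1/2$.

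The plan has three steps, in this order: uniform $L^\infty$ bounds, then $L^2$ convergence, then the combined statement.

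\textbf{Uniform $L^\infty$ bounds.} We need a lower bound on $R_T a^T$ and $R_T b^T$ that is uniform in $T\ge T_0$.
\begin{itemize}
\item On compact $M$, the heat kernel satisfies $r_t\ge c_{T_0}>0$ for all $t\ge T_0$. This follows from positivity and continuity of $r_{T_0}$ on $M\times M$ together with the semigroup property $r_{t}=R_{t-T_0}r_{T_0}$.
\item For \eqref{eq:OU}, $r_t$ is not bounded below on $\mathbb{R}^d$. However, the compact support of $\rho,\sigma$ is exactly what is needed. From \eqref{eq:ou-trans-dens}, $\inf_{x\in K,\,y\in\mathbb{R}^d} r_t(x,y)$ is not positive either. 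So instead we use $b^T\,R_Ta^T=\sigma$, which forces $a^T$ to be supported in $\operatorname{supp}\rho$ (we may take $a^T=0$ off $\operatorname{supp}\rho$). We then need $r_t(x,y)\ge c$ only for $x,y$ both in the compact sets $\operatorname{supp}\rho$ and $\operatorname{supp}\sigma$, and this holds uniformly for $t\ge T_0$.
\end{itemize}
Given the lower bound, $R_T a^T(y)\ge c_{T_0}\norm{a^T}_{L^1}$ on $\operatorname{supp}\sigma$. Hence
\begin{align*}
b^T=\frac{\sigma}{R_Ta^T}\le \frac{\norm{\sigma}_\infty}{c_{T_0}\norm{a^T}_{L^1}},
\end{align*}
which is the second claimed inequality. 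Similarly $R_Tb^T\ge c_{T_0}\norm{b^T}_{L^1}=c_{T_0}$ on $\operatorname{supp}\rho$, using the normalization $\norm{b^T}_{L^1}=1$. This gives $a^T\le \norm{\rho}_\infty/c_{T_0}$, the first inequality.

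\textbf{$L^2$ convergence.} Integrating $a^T R_Tb^T=\rho$ against $\diffref$ and using symmetry of $r_T$ gives $\int a^T\,d\diffref\cdot\int b^T\,d\diffref=1$ up to no error. In fact $\int a^TR_Tb^T\,d\diffref=\int\rho\,d\diffref=1$ exactly. Decompose
\begin{align*}
R_Tb^T = 1+\mathcal{R}_Tb^T, \qquad \norm{\mathcal{R}_Tb^T}_{L^2}\le e^{-\lambda_1T}\norm{b^T}_{L^2}.
\end{align*}
The $L^2$ norm of $b^T$ is uniformly bounded. It is bounded by $\norm{b^T}_\infty^{1/2}\norm{b^T}_{L^1}^{1/2}$, and $\norm{a^T}_{L^1}$ is bounded below because $a^T\ge \rho/\norm{R_Tb^T}_\infty$. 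Therefore
\begin{align*}
\norm{a^T-\rho}_{L^2}=\norm{a^T\,\mathcal{R}_Tb^T}_{L^2}\le \norm{a^T}_\infty e^{-\lambda_1 T}\norm{b^T}_{L^2}\to0.
\end{align*}
For $b^T$ we write $R_Ta^T=\norm{a^T}_{L^1}+\mathcal{R}_Ta^T$. Since $\norm{a^T}_{L^1}\to\int\rho=1$ by the previous estimate, the same argument gives $b^T\to\sigma$, indeed at rate $e^{-\lambda_1T}$.

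\textbf{Main obstacle.} The main obstacle is breaking the circularity: the $L^\infty$ bounds use $\norm{a^T}_{L^1}$, while the $L^2$ bounds use the $L^\infty$ bounds. The first step resolves this by deriving everything from the normalization $\norm{b^T}_{L^1}=1$ and the positive lower bound on $r_T$ over the relevant compact sets. In the OU case, some care is also needed to justify the support restriction and the lower bound on $r_t$ over compacts.
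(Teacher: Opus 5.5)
Your proof is correct. On a compact manifold it follows the paper's argument step for step:
\begin{itemize}
\item a uniform two-sided bound on $r_t$ for $t\ge T_0$;
\item then $\rho=a^TR_Tb^T\ge c\,a^T$ (using $\norm{b^T}_{L^1(\diffref)}=1$) and $\sigma=b^TR_Ta^T\ge c\,b^T\norm{a^T}_{L^1(\diffref)}$;
\item a lower bound on $\norm{a^T}_{L^1(\diffref)}$ from the upper kernel bound;
\item finally $\norm{a^T-\rho}_{L^2(\diffref)}\le\norm{a^T}_{L^\infty(\diffref)}\norm{R_Tb^T-1}_{L^2(\diffref)}$ together with the spectral gap.
\end{itemize}
Your splitting $R_Ta^T=\norm{a^T}_{L^1(\diffref)}+\mathcal{R}_Ta^T$ for the $b^T$ part is equivalent to the paper's $R_T(a^T-\rho)+(R_T\rho-1)$. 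Your Chapman--Kolmogorov bound $r_t(x,y)=\int r_{t-T_0}(x,z)r_{T_0}(z,y)\,\diffref(dz)\ge\min r_{T_0}$ is a slightly more elementary substitute for the paper's appeal to uniform convergence $r_t\to1$. The same identity also gives $r_t\le\max r_{T_0}$.

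The genuine difference is the OU case. The paper does not prove it; it imports it from the cited lemmas of Conforti, which hold in a broader setting including $\mathrm{CD}(\kappa,\infty)$ with $\kappa>0$. You observe that the elementary scheme still works because $a^T=\rho/R_Tb^T$ and $b^T=\sigma/R_Ta^T$ vanish off the compact supports. So one only needs two-sided bounds on the explicit kernel over $\mathrm{supp}\,\rho\times\mathrm{supp}\,\sigma$, uniformly in $t\ge T_0$. These hold: for $\norm{x},\norm{y}\le R$ the exponent in \eqref{eq:ou-trans-dens} lies in $[-R^2/(e^{T_0}-1)-R^2/(2\sinh(T_0/2)),\,R^2/(2\sinh(T_0/2))]$. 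Your route makes the proposition self-contained in both settings, with explicit constants and the $e^{-\lambda_1T}$ rate; the citation buys generality beyond them.

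There are three small slips to fix.
\begin{itemize}
\item The vanishing of $a^T$ off $\mathrm{supp}\,\rho$ comes from $a^TR_Tb^T=\rho$ together with $R_Tb^T>0$, not from $b^TR_Ta^T=\sigma$. That second equation gives the support of $b^T$, and you need both.
\item The sentence asserting $\int a^T\,d\diffref\cdot\int b^T\,d\diffref=1$ is false. The correct identity is $\int a^T\,d\diffref+\sum_{i\ge1}e^{-\lambda_iT}\alpha_i(T)\beta_i(T)=1$, with $\alpha_i(T),\beta_i(T)$ the spectral coefficients of $a^T,b^T$. You never use it, so nothing breaks.
\item The bound $a^T\ge\rho/\norm{R_Tb^T}_{L^\infty(\diffref)}$ needs a uniform upper kernel bound, which you do not state. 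In the OU case it follows from $\sup_x r_t(x,y)=(1-e^{-t})^{-d/2}e^{\norm{y}^2/2}$ for $y\in\mathrm{supp}\,\sigma$.
\end{itemize}
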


Let $\mu_{t}^{T} = (x_t)_{\#}P^{T}$ denote the time $t \in [0,T]$ marginal distribution of the $\Schro$ bridge. Recall that the collection $(\mu_{t}^{T}, t \in [0,T])$ is called the $T$-\textbf{entropic interpolation}. Note that $d\mu_{t}^{T}/d\diffref = R_{t}a^{T}R_{T-t}b^{T}$. Importantly, as a consequence of the $L^2(\diffref)$ convergence of $a^{T}$ to $\rho$, it holds that $\int_{M} a^{T} \diffref \to 1$ as $T \to +\infty$. Hence, given $T_0 > 0$ large enough,
\begin{align}\label{eq:uniform-bdd-potents}
    \sup\limits_{T \geq T_0} \left(\norm{a^{T}}_{L^{\infty}(\diffref)}+\norm{b^{T}}_{L^{\infty}(\diffref)}\right) < +\infty \text{ and thus } \sup\limits_{T \geq T_0} \left(\sup\limits_{t \in [0,T]}\norm{\frac{d\mu_{t}^{T}}{d\diffref}}_{L^{\infty}(\diffref)} \right)<+\infty. 
\end{align}

\subsection{Optimal Transport.}\label{subsec:ot-prelim}
As outlined in the introduction, the construction of the diffusion approximation $Q^T$ in \eqref{defn:diffusion-approx} uses the quadratic cost optimal transport map at time $T/2$. This requires a brief introduction to the important notions of optimal transport. For a general treatise on optimal transport, we suggest the texts \cite{villani2021topics,santam2015ot,villani2009optimal}.

On a smooth manifold $(M,g)$, the squared 2-\textbf{Wasserstein} distance between $\mu, \nu \in \cP_2(M)$ is defined by
\begin{align}\label{eq:wass-dist}
    \Was{2}^2(\mu,\nu) &:= \inf \limits_{\gamma \in \Pi(\mu,\nu)} \int_{M \times M} d^2(x,y)\gamma(dxdy), 
\end{align}
where $\Pi(\mu,\nu)$ is the set of probability measures on $M\times M$ with first marginal equal to $\mu$ and the second equal to $\nu$. When $\mu$ is absolutely continuous with respect to $\vol$, the optimal choice of $\gamma$ in \eqref{eq:wass-dist} is unique; denote it $\gamma^*$. Moreover, it is induced by a transport map \cite[Theorem 10.41]{villani2009optimal}. That is, there exists a measurable function $T: M \to M$ such that $\gamma^* = (\mathrm{Id},T)_{\#}\mu$. When $M = \mathbb{R}^{d}$, \cite{THEbrenier} proved that there is a convex function $\phi$, unique up to a constant, such that $T = \nabla \varphi$. A similar characterization on manifolds was first shown by \cite{mccann-ot-maifold01}. We will call $T$ the \textbf{quadratic cost optimal transport map}.

\subsection{Spectral Analysis.}\label{subsec:spectral-analysis}
The reference diffusion in \eqref{eq:reference-process-intro} has an associated second order differential operator called the (infinitesimal) generator, defined for $f \in C_{c}^{\infty}(M)$ by
\begin{align}\label{defn:generator-of-diff}
    Lf := -\frac{1}{2}\langle \nabla U, \nabla f \rangle + \frac{1}{2}\Delta_g f,
\end{align}
where $\Delta_{g}$ denotes the Laplace-Beltrami operator on $(M,g)$. In the case of $\mathbb{R}^{d}$, $\Delta_g$ is the standard Laplacian $\Delta = \sum_{i=1}^{d} \partial^2_{i}$. 

On a compact manifold, it is a standard fact \cite{milman18-spectral} that the operator $L$ admits a spectral decomposition. That is, there is a discrete sequence of eigenvalues $(\lambda_{j}, j \geq 0) \subset \mathbb{R}$ with $0 = \lambda_0 < \lambda_1$ and $\lambda_{j} \leq \lambda_{j+1}$ for all $j \geq 0$ and $\lambda_j \to +\infty$ as $j \to+\infty$. We denote the corresponding sequence of eigenvectors as $(\psi_{j}, j \geq 0) \subset L^2(\diffref)$ with $\psi_{0} = 1$. The collection is chosen to form an orthonormal basis of $L^2(\diffref)$ such that $L\psi_j = -\lambda_{j}\psi_{j}$. Similarly, on $\mathbb{R}^{d}$ with the \eqref{eq:OU} generator, it is a standard fact that a spectral decomposition also exists. In this case, $\lambda_1 = 1/2$ and the eigenfunctions are formed from the Hermite polynomials. For more explicit examples, consult \cite[Section 2.7]{bgl-markov}.

Additionally, although $\lambda_0$ is also a simple eigenvalue (multiplicity one), this may not be the case for $\lambda_1$. We define
\begin{align}
    \mathcal{I}_{1} := \{i \geq 0: \lambda_i = \lambda_1\}.
\end{align}
Note that $\mathcal{I}_{1}$ is always a finite set. 

The presence of a discrete spectrum in our settings allows us to deduce more refined information in the large temperature limit. For instance, it is known from \cite[Theorem 1.4]{conforti21deriv} that $H(P^{T}|R^{T}) \to H(\mu|\diffref)+H(\nu|\diffref)$ as $T \to +\infty$ at a rate of $\exp(-\kappa T/2)$ that is shown to be sharp. In our current setting, we are able to exactly compute the next order term in terms of spectral quantities.  
\begin{theorem}\label{thm:next-term-entropic-cost}
    Let $(M,g)$ be either (1) $\mathbb{R}^{d}$ with \eqref{eq:OU} reference process, or (2) a compact manifold with reference process \eqref{eq:reference-process-intro} satisfying $U \in C^2(M)$. Let $\mu,\nu \in \cP_2(M)$ satisfy Assumption \ref{assumption}. The following limit holds
    \begin{align*}
        \lim\limits_{T \to +\infty} \exp(\lambda_1 T)\left(H(P^{T}|R^{T}) - \left(H(\mu|\diffref)+H(\nu|\diffref)\right)\right) &= -\sum_{i \in \mathcal{I}_{1}} p_iq_i,
    \end{align*}
    where $p_i = \langle \psi_i, \rho\rangle_{L^2(\diffref)}$ and $q_i = \langle \psi_i,\sigma \rangle_{L^2(\diffref)}$.
\end{theorem}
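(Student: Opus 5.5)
Write $a=a^T$, $b=b^T$. The plan is to express the entropic cost through the Schrödinger system and then expand in the eigenbasis. By \eqref{eq:product-decomp}, $H(P^T|R^T) = \int \log a\, d\mu + \int \log b\, d\nu$. The marginal constraints give the Schrödinger system
\begin{align*}
\rho = a\, R_T b, \qquad \sigma = b\, R_T a,
\end{align*}
so that $\log a = \log\rho - \log R_T b$ on the support of $\rho$, and similarly for $b$. Hence
\begin{align*}
H(P^T|R^T) - H(\mu|\diffref) - H(\nu|\diffref) = -\int \log(R_T b)\, d\mu - \int \log(R_T a)\, d\nu .
\end{align*}

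The next step is a spectral expansion of $R_T a$ and $R_T b$:
\begin{align*}
R_T a = \langle a,1\rangle + \sum_{j\ge1} e^{-\lambda_j T}\langle a,\psi_j\rangle \psi_j .
\end{align*}
By Proposition \ref{prop:schro-limit-facts}, $\langle a, 1\rangle =: \alpha_T \to 1$ and $\langle b,1\rangle = 1$.

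Write $R_T a = \alpha_T(1+\epsilon_a)$ and $R_T b = 1+\epsilon_b$. Then $\|\epsilon_a\|_\infty,\|\epsilon_b\|_\infty = O(e^{-\lambda_1 T})$. On a compact manifold this follows from the uniform bounds \eqref{eq:uniform-bdd-potents} together with ultracontractivity: factor $R_T = R_1 R_{T-1}$, use $L^2\to L^\infty$ boundedness of $R_1$, and the $L^2$ spectral gap bound on $R_{T-1}$. For OU, $\sup_y r_1(\cdot,y)$ is not bounded, but we only integrate against $\rho,\sigma$ with compact support, so bounds of $\epsilon$ on compact sets suffice. Those follow from hypercontractivity, or directly from $\|r_1(x,\cdot)\|_{L^2(\diffref)}$ being bounded for $x$ in compacts.

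The two error terms are then handled by Taylor expansion: $\log(1+\epsilon) = \epsilon + O(\epsilon^2)$, with the quadratic part of order $O(e^{-2\lambda_1 T})$ and hence negligible.

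For the $\nu$-term, $-\int \log(R_T a)\, d\nu = -\log\alpha_T - \int \epsilon_a\, d\nu + O(e^{-2\lambda_1T})$. The linear term is
\begin{align*}
\int \epsilon_a\, \sigma\, d\diffref = \alpha_T^{-1}\sum_{j\ge1} e^{-\lambda_j T}\langle a,\psi_j\rangle q_j .
\end{align*}
Multiplying by $e^{\lambda_1 T}$, the terms with $j\in\mathcal I_1$ converge to $\sum_{i \in \mathcal{I}_1} p_i q_i$, because $a\to\rho$ in $L^2$. The remaining terms are bounded by $e^{-(\lambda_{j^*}-\lambda_1)T}\|a\|_2\|\sigma\|_2 \to 0$, where $\lambda_{j^*}$ is the next distinct eigenvalue (Cauchy–Schwarz). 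The same computation for the $\mu$-term gives a contribution $-\sum_{i \in \mathcal{I}_1} p_i q_i$, with no normalization factor since $\langle b,1\rangle = 1$.

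The constant $-\log\alpha_T$ still has to be accounted for. Integrating $\rho = a R_T b$ gives
\begin{align*}
1 = \int a\, R_T b\, d\diffref = \alpha_T + \sum_{j\ge1} e^{-\lambda_jT}\langle a,\psi_j\rangle\langle b,\psi_j\rangle ,
\end{align*}
so $\alpha_T = 1 - e^{-\lambda_1T}\sum_{i \in \mathcal{I}_1} p_iq_i + o(e^{-\lambda_1T})$, again using $a\to\rho$ and $b\to\sigma$ in $L^2$. Therefore $-\log\alpha_T = +e^{-\lambda_1T}\sum_{i \in \mathcal{I}_1} p_iq_i + o(e^{-\lambda_1 T})$.

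Summing the three contributions gives $-\sum - \sum + \sum = -\sum_{i \in \mathcal{I}_1} p_i q_i$, which is the claimed limit.

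The main obstacle is justifying the uniform smallness of $\epsilon_a$ and $\epsilon_b$ in $L^\infty$ on the supports, which is needed for the Taylor expansion of the logarithm, especially in the OU case. Uniformity in $T$ of the constants comes from Proposition \ref{prop:schro-limit-facts}.
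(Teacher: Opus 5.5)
Your proof is correct, and its skeleton matches the paper's: the same reduction to $-\int \log R_Tb^T\,d\mu-\int \log R_Ta^T\,d\nu$, the same spectral expansions, and the same identity $1=\int a^T R_Tb^T\,d\diffref$ giving $1-\int a^T\,d\diffref = e^{-\lambda_1T}\sum_{i\in\mathcal{I}_1}p_iq_i+o(e^{-\lambda_1T})$. The difference is in how the logarithm is linearized. You prove uniform smallness of $R_Tb^T-1$ and $R_Ta^T/\alpha_T-1$ on the supports of $\rho,\sigma$ via the local bound $\abs{R_1f(x)}\le r_2(x,x)^{1/2}\norm{f}_{L^2(\diffref)}$, and then Taylor expand, which gives an explicit $O(e^{-2\lambda_1T})$ remainder for the linearization. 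The paper uses no pointwise information at all. It sandwiches $\log(1+x)$ between $x/(1+x)$ and $x$, and uses $\rho=a^TR_Tb^T$ to rewrite the lower bound as $\int (R_Tb^T-1)a^T\,d\diffref=\int(R_Tb^T-1)\,d\mu+\int(R_Tb^T-1)(a^T-\rho)\,d\diffref$. The last term is $o(e^{-\lambda_1T})$ by Cauchy--Schwarz, because $e^{\lambda_1T}\norm{R_Tb^T-1}_{L^2(\diffref)}$ stays bounded while $a^T\to\rho$ in $L^2(\diffref)$; the $\nu$-term is handled the same way with $b^T-\sigma$. So the step you single out as the main obstacle is exactly what the paper's purely $L^2(\diffref)$ argument sidesteps. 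That step is genuinely delicate for OU, since $R_1$ is not globally $L^2\to L^\infty$ bounded there. In exchange, your route gives an explicit second-order remainder and cleaner bookkeeping, because you factor out $\alpha_T$. One small correction: hypercontractivity only gives $L^2\to L^q$ bounds with $q<\infty$, so it does not by itself give the pointwise control you need. The argument that works is the direct estimate, using $\norm{r_1(x,\cdot)}_{L^2(\diffref)}^2=r_2(x,x)=(1-e^{-2})^{-d/2}\exp(\norm{x}^2/(e+1))$, which is bounded on compacts.
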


\section{Diffusion Approximation: $\mathbb{R}^{d}$ with \eqref{eq:OU} Reference}\label{sec:ou-diff}
We now properly state Main Result \ref{mainresult} in the Euclidean setting. 
\begin{theorem}\label{thm:euclidean-case}
    Equip $\mathbb{R}^{d}$ with reference process \eqref{eq:OU}, so that $\diffref$ is $N(0,\Id)$. Let $\mu,\nu \in \cP_2(\mathbb{R}^{d})$ satisfy Assumption \ref{assumption}. Let $P^{T}$ be the $\Schro$ bridge from $\mu$ to $\nu$ with \eqref{eq:OU} reference as defined in \eqref{eq:dynam-schro-bridge-defn}, and let $Q^{T}$ denote the noising-denoising approximation in \eqref{defn:diffusion-approx}. Then
    \begin{align}
        \limsup\limits_{T \to +\infty} \exp(T/2)\left(H(Q^T|P^T)+H(P^T|Q^T)\right) < +\infty. 
    \end{align}
\end{theorem}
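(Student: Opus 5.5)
The plan is to write the symmetric relative entropy as a single integral,
\begin{align*}
H(Q^T|P^T)+H(P^T|Q^T) = \int \log\frac{dP^T}{dQ^T}\, d(P^T-Q^T),
\end{align*}
and then exploit the fact that both path measures lie in $\Pi(\mu,\nu)$. Combining the $(f,g)$-decomposition \eqref{eq:product-decomp} with \eqref{defn:diffusion-approx}, and writing $y=x_{T/2}$, $z=x_T$, gives
\begin{align*}
\log\frac{dP^T}{dQ^T}(x) = \log\frac{a^T}{\rho}(x_0) + \log\frac{b^T}{\sigma}(z) + \log R_{T/2}\sigma(\nabla\varphi_{T/2}(y)) + \log\frac{r_{T/2}(y,z)}{r_{T/2}(\nabla\varphi_{T/2}(y),z)}.
\end{align*}
The first two terms depend only on $x_0$ or only on $x_T$. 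Both $P^T$ and $Q^T$ have marginals $\mu$ and $\nu$ there, so these terms integrate to zero against $P^T-Q^T$. This removes any need for quantitative rates on $a^T\to\rho$ and $b^T\to\sigma$; Proposition \ref{prop:schro-limit-facts} is used only for the uniform $L^\infty$ bounds \eqref{eq:uniform-bdd-potents}.

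The remaining function $f(y,z)$ is made explicit using \eqref{eq:ou-trans-dens}:
\begin{align*}
f(y,z) = \log R_{T/2}\sigma(\nabla\varphi_{T/2}(y)) + \frac{\norm{\nabla\varphi_{T/2}(y)}^2-\norm{y}^2}{2(e^{T/2}-1)} + \frac{e^{T/4}\langle y-\nabla\varphi_{T/2}(y),z\rangle}{e^{T/2}-1}.
\end{align*}
I will bound $\int f\,dP^T$ and $\int f\,dQ^T$ term by term.

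\emph{First term.} Expanding $\sigma$ in Hermite polynomials gives $\norm{R_{T/2}\sigma-1}_{L^2(\diffref)}\le e^{-T/4}\norm{\sigma-1}_{L^2(\diffref)}$, since $\lambda_1=1/2$. Under $Q^T$, $\nabla\varphi_{T/2}(y)\sim R_{T/2}^*\nu$, so $\int\log R_{T/2}\sigma\,d(R^*_{T/2}\nu)=H(R_{T/2}^*\nu|\diffref)\le\chi^2$, which is $O(e^{-T/2})$. Under $P^T$ I will compare with the $Q^T$ value. Both $\mu^T_{T/2}$ and $R^*_{T/2}\mu$ are $O(e^{-T/4})$-close to $\diffref$ in $\chi^2$ (Proposition \ref{prop:chi-sqrd-bdd} and the spectral estimate). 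The function $\log R_{T/2}\sigma\circ\nabla\varphi_{T/2}$ is small, because $\sigma$ is bounded with compact support, which makes $R_{T/2}\sigma$ uniformly close to $1$ on large balls. Cauchy--Schwarz should then give $O(e^{-T/2})$.

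\emph{Third term.} Under both measures $z\in\mathrm{supp}\,\sigma$, which is compact, so $|z|\le C$. It then suffices to bound $\int\norm{y-\nabla\varphi_{T/2}(y)}$ under the $y$-marginal by $O(e^{-T/4})$. Under $Q^T$ this quantity is at most $\Was{2}(R^*_{T/2}\mu,R^*_{T/2}\nu)\le e^{-T/4}\Was{2}(\mu,\nu)$, by the $\mathrm{CD}(1,\infty)$ Wasserstein contraction (Proposition \ref{prop:bgl-9-7-2}). The prefactor $e^{T/4}/(e^{T/2}-1)\approx e^{-T/4}$ then yields exactly $e^{-T/2}$. This is the term that fixes the rate.

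\emph{Second term.} Under $Q^T$ this term is $(m_2(R^*_{T/2}\nu)-m_2(R^*_{T/2}\mu))/(2(e^{T/2}-1))$, which is $O(e^{-T/2})$ because second moments stay bounded along OU.

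The main obstacle is transferring these integrals from $Q^T$ to $P^T$ when the integrand involves $\nabla\varphi_{T/2}$, because under $P^T$ the point $y$ has law $\mu^T_{T/2}$ rather than $R^*_{T/2}\mu$. I would first try the bound $d\mu^T_{T/2}/d\diffref\le C$ from \eqref{eq:uniform-bdd-potents}, which reduces matters to controlling $\int\norm{\nabla\varphi_{T/2}-\mathrm{Id}}^k\,d\diffref$. Here $R_{T/2}\rho$ is not bounded below on all of $\mathbb{R}^d$, so this is not immediate. I would handle it by splitting into a large ball, on which $R_{T/2}\rho\ge 1/2$ for $T$ large, and its complement, which has Gaussian tails. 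On the complement I would use a Caffarelli-type bound: $R^*_{T/2}\mu$ and $R^*_{T/2}\nu$ have log-densities relative to $\diffref$ with Hessians of order $e^{-T/2}$, which should make $\nabla\varphi_{T/2}$ globally Lipschitz with linear growth and constants uniform in $T$. If that fails, a fallback is to use the conditional law of $z$ given $y$ under $P^T$, namely $r_{T/2}(y,z)b^T(z)/R_{T/2}b^T(y)$. This would let me integrate out $z$ exactly and compare directly with $Q^T$ through the chi-squared closeness of $\mu^T_{T/2}$ and $R^*_{T/2}\mu$ to $\diffref$.
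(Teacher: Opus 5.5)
Your argument is correct and is essentially the paper's proof. You use the same cancellation of the $x_0$- and $x_T$-terms and the same split into the $\log R_{T/2}\sigma\circ\nabla\varphi_{T/2}$ term and the heat-kernel ratio. You also rely on the same inputs: uniform linear growth of $\nabla\varphi_{T/2}$ via Caffarelli (Proposition \ref{prop:prop-of-log-conc}), the $e^{-T/4}$ Wasserstein contraction, Proposition \ref{prop:chi-sqrd-bdd}, and \eqref{eq:uniform-bdd-potents}.

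Two local differences are worth noting.

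\textbf{Heat-kernel term.} You bound $\abs{x_T}$ under both measures using compactness of $\mathrm{supp}\,\nu$. This is a genuine simplification. It removes the paper's domination of the law of $(x_{T/2},x_T)$ under $P^T$ by $C r_{T/2}\,\diffref\otimes\diffref$, and the $L^2(\diffref\otimes\diffref)$ estimate on $r_{T/2}-1$.

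\textbf{The $\log R_{T/2}\sigma$ term under $P^T$.} You compare with the exact $Q^T$ value $H(R_{T/2}^*\nu|\diffref)$ via Cauchy--Schwarz. For this you need $\norm{\log R_{T/2}\sigma\circ\nabla\varphi_{T/2}}_{L^2(\diffref)}=O(e^{-T/4})$, and ``close to $1$ on large balls'' does not by itself give that. The explicit kernel does: since $\nu$ is supported in a ball, $\abs{\log R_{T/2}\sigma(w)}\le C(e^{-T/4}\norm{w}+e^{-T/2}(1+\norm{w}^2))$, and linear growth of $\nabla\varphi_{T/2}$ then finishes it. The paper instead uses the one-sided bound $\log(1+x)\le x$, which suffices because the total is nonnegative, together with the gradient bound of Proposition \ref{prop:bgl-8-6-1}.

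\textbf{The ``main obstacle'' is not one.} The paper writes
$\Exp{\diffref}\norm{y-\nabla\varphi_{T/2}(y)}=\Exp{R_{T/2}^*\mu}\norm{y-\nabla\varphi_{T/2}(y)}+\Exp{\diffref}[(1-R_{T/2}\rho)\norm{y-\nabla\varphi_{T/2}(y)}]$.
It then applies Cauchy--Schwarz to the second piece, using $\norm{R_{T/2}\rho-1}_{L^2(\diffref)}\le e^{-T/4}\norm{\rho-1}_{L^2(\diffref)}$ and linear growth. No lower bound on $R_{T/2}\rho$ is ever needed.

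If you keep your ball/complement split, the radius cannot be fixed, since then the complement's contribution does not decay in $T$. It must grow like $\sqrt{T}$ so that the Gaussian tail is $O(e^{-T/4})$. You then need $R_{T/2}\rho\ge 1/2$ on that growing ball, which follows from the same pointwise kernel bound applied to $\rho$.
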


To assess the tightness of this asymptotic we compute explicitly $P^{T}$ and $Q^{T}$ in the case of univariate Gaussian marginals. We note that this setting is not included in the theorem as $\mu$ and $\nu$ must be compactly supported. 

\textbf{Gaussian computations.} Because explicit formulas for the relevant quantities exist in the Gaussian setting, see \cite{zhang-sb-ouformula-2026,bunne-sbformula-26,janati2020} and Proposition \ref{prop:euc-gaussian-comp}, we let $\mu = N(0,1)$ and $\nu = N(0,\eta^2)$. To keep calculations simple, we only compute the endpoint distributions, i.e.\ we set $\pi^{T} := (x_0,x_{T})_{\#}P^{T}$ and $q^{T} := (x_0,x_{T})_{\#}Q^{T}$. Note that by the information processing inequality $H(q^{T}|\pi^{T}) \leq H(Q^{T}|P^{T})$ and $H(\pi^{T}|q^{T}) \leq H(P^{T}|Q^{T})$, so the above asymptotic still holds. In this case, $\pi^{T} \sim N(0,\Sigma_{1})$ and $q^{T} \sim N(0,\Sigma_2)$ with
    \begin{align}
        \Sigma_{1} = \begin{pmatrix} 1 & c_{T} \\ c_{T} & \eta^2 \end{pmatrix}, \quad \Sigma_2 = \begin{pmatrix} 1 & \frac{e^{-T/2}\eta^2}{\sqrt{1+e^{-T/2}(\eta^2-1)}} \\ \frac{e^{-T/2}\eta^2}{\sqrt{1+e^{-T/2}(\eta^2-1)}} & \eta^2 \end{pmatrix},
    \end{align}
    where $c_{T} = \frac{1}{2e^{-T/2}}\left(\sqrt{(1-e^{-T})^2+4e^{-T}\eta^2}-(1-e^{-T})\right)$. On the basis of these formulas, it holds as $T \to +\infty$ that
    \begin{align*}
         \lim\limits_{T \to +\infty }\exp(2T)\left(H(\pi^{T}|q^{T})+H(q^{T}|\pi^{T})\right) &< +\infty.
    \end{align*}
    Hence, the implied asymptotic rate for the endpoint distributions that Theorem \ref{thm:euclidean-case} suggests is not tight in the case of Gaussian marginals. Lastly, note that when $\eta^2 = 1$, i.e.\ when $\mu = \nu = \diffref$, all the relative entropy quantities vanish for all $T > 0$.
    We give the details of this Gaussian computation in Proposition \ref{prop:euc-gaussian-comp} in the Appendix. 

The proof of Theorem \ref{thm:euclidean-case} is relegated to the Appendix. The key advantage of performing computations using the symmetric relative entropy is that the potentials $a^{T}, b^{T}$ cancel, resulting in an expression that only involves quantities relating to the diffusion. A similar maneuver is used in the small-temperature analysis of \cite{AHMP25,MP25} and the estimates of \cite[Theorem 1.2]{chiarini2022gradient}.

The key nontrivial step in the Euclidean setting considered in Theorem \ref{thm:euclidean-case} concerns the regularity of the quadratic cost optimal transport map between $R_{T/2}^*\mu$ and $R_{T/2}^*\nu$, denoted $\nabla \varphi_{T/2}$. This relies on quantitative results on the propagation of log-concavity along \eqref{eq:OU}. We summarize this in the following proposition, and delay the proof to the Appendix. 
\begin{proposition}\label{prop:prop-of-log-conc}
    Let $\mu,\nu \in \cP_2(\mathbb{R}^{d})$ satisfy Assumption \ref{assumption}. Then for some $T_0 > 0$ large enough, there is $C >0$ such that $\norm{\nabla^2 \varphi_{T/2}} \leq C$ for all $T \geq T_0$. In particular, this implies the existence of a different constant $C > 0$ such that
    \begin{align*}
        \norm{\nabla \varphi_{T/2}(x)} \leq C(1 + \norm{x}), \quad \text{for all $T \geq T_0$ and $x \in \mathbb{R}^{d}$.}
    \end{align*}
\end{proposition}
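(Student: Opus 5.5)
Write $\mu_s := R_s^*\mu$ and $\nu_s := R_s^*\nu$, with $s = T/2$. The plan is to show that for $s$ large both $\mu_s$ and $\nu_s$ are uniformly log-concave and log-smooth, with constants bounded uniformly in $s$, and then to apply Caffarelli-type contraction and regularity theorems for Brenier maps between such measures.

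\textbf{Step 1: explicit density of $\mu_s$.} By Mehler's formula, $X_s = e^{-s/2}X_0 + \sqrt{1-e^{-s}}\,Z$ with $X_0\sim\mu$ and $Z \sim N(0,\Id)$. The Lebesgue density of $\mu_s$ is therefore the Gaussian $N(0,(1-e^{-s})\Id)$ convolved with the law of $e^{-s/2}X_0$. That law is supported in a ball of radius $e^{-s/2}R_0$, where $\mathrm{supp}\,\rho \subset B(0,R_0)$. Write $\mu_s(dx) = e^{-V_s(x)}dx$. Differentiating under the integral gives
\begin{align*}
\nabla^2 V_s(x) = \frac{1}{1-e^{-s}}\Id - \frac{1}{(1-e^{-s})^2}\Cov_{x}\big(e^{-s/2}X_0\big),
\end{align*}
where $\Cov_x$ denotes the covariance under the posterior law of $e^{-s/2}X_0$ given $X_s = x$. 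This posterior is supported in a ball of radius $e^{-s/2}R_0$, so its covariance is bounded by $e^{-s}R_0^2\Id$. Hence, for $s \geq s_0$ large,
\begin{align*}
\tfrac12 \Id \leq \nabla^2 V_s \leq 2\Id ,
\end{align*}
and the same holds for $W_s$, where $\nu_s = e^{-W_s}dx$. Only $L^\infty$ and compact support of $\rho,\sigma$ are used here; indeed only compact support is needed.

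\textbf{Step 2: two-sided Hessian bound via Caffarelli.} Caffarelli's contraction theorem gives the following: if the source is $\alpha$-log-smooth and the target is $\beta$-log-concave, then $\nabla^2\varphi \leq \sqrt{\alpha/\beta}\,\Id$. This is usually stated for a Gaussian source, but its extensions (Kolesnikov; Valdimarsson; the Chewi–Pooladian proof via entropic OT) cover log-smooth sources and log-concave targets. With $\alpha = 2$ and $\beta = 1/2$ from Step 1 we get $\nabla^2 \varphi_{T/2} \leq 2\Id$ uniformly in $T \geq 2s_0 =: T_0$. Since $\varphi_{T/2}$ is convex, $\nabla^2\varphi_{T/2}\geq 0$, so $\norm{\nabla^2\varphi_{T/2}} \leq C$ in operator norm. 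A bound on the inverse map is not needed.

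\textbf{Step 3: linear growth.} It remains to control $\nabla\varphi_{T/2}(0)$ uniformly. The Hessian bound makes $\nabla\varphi_{T/2}$ $C$-Lipschitz, so
\begin{align*}
\norm{\nabla\varphi_{T/2}(x)} \leq \norm{\nabla\varphi_{T/2}(0)} + C\norm{x}.
\end{align*}
Integrate against $\mu_s$ and use that the pushforward is $\nu_s$:
\begin{align*}
\norm{\nabla\varphi_{T/2}(0)} \leq \int\norm{\nabla\varphi_{T/2}}\,d\mu_s + C\int\norm{x}\,d\mu_s = \int\norm{y}\,d\nu_s + C\int\norm{x}\,d\mu_s .
\end{align*}
Both first moments are bounded uniformly in $s$, for instance by $R_0 + \sqrt{d}$. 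This yields the claimed bound.

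The main obstacle is Step 2: one must invoke the correct version of Caffarelli's theorem for a non-Gaussian but log-smooth source. The route I would try first is to cite the general statement, namely that for a $\alpha$-log-smooth source and a $\beta$-log-concave target the Brenier map is $\sqrt{\alpha/\beta}$-Lipschitz. If only the Gaussian-source version is available, a fallback is to compose maps through $\diffref$. That is, set $\nabla\varphi_{T/2} = \nabla\psi_\nu\circ(\nabla\psi_\mu)^{-1}$, where each map is obtained from Caffarelli with a Gaussian source. However, such a composition is not itself a Brenier map, so this fallback would only give a Lipschitz bound on some transport map rather than on $\nabla\varphi_{T/2}$. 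The general version should therefore be preferred.
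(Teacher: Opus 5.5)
Your proof is correct and follows essentially the same route as the paper. It first obtains uniform two-sided Hessian bounds on $-\log$ of the Lebesgue densities of $R_{T/2}^*\mu$ and $R_{T/2}^*\nu$; your posterior-covariance identity reproduces exactly the bound $\frac{e^{s}}{e^{s}-1}\bigl(1-\frac{R^2}{e^{s}-1}\bigr)\Id \leq \nabla^2 V_s \leq \frac{e^{s}}{e^{s}-1}\Id$ that the paper imports from the heat-flow literature. It then applies the general Caffarelli contraction theorem in the Chewi--Pooladian form and closes with the same Lipschitz/integration argument for $\nabla\varphi_{T/2}(0)$, the only cosmetic difference being that you bound the first moments directly via the Mehler representation rather than via uniform boundedness of $R_{T/2}\rho$ and $R_{T/2}\sigma$ with respect to $\diffref$.
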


We note that the propagation of various notions related to log-concavity along \eqref{eq:OU} is a rich area of study, with recent work including \cite{chaintron2025propagation,brigati-heatflow25,silveri-logconcav-25}. Indeed, the above result can be extended to $\mu, \nu$ such that their negative-log Lebesgue density is a Lipschitz perturbation of a strictly convex function, see \cite[Proposition 2.10]{chaintron2025propagation} and \cite[Theorem 1.3]{brigati-heatflow25}. However, we only consider the compact support setting as we will require the results of \cite[Lemma 2.1, Lemma 3.6]{conforti21deriv} summarized in Proposition \ref{prop:schro-limit-facts}, which require compact support. 

It is important to note that the creation of log-concavity (in the strong sense of uniform Hessian bounds) is unique to \eqref{eq:OU}-like processes \cite{kol-ou-logcov-01}. The proof of Proposition \ref{prop:ou-density-ratio} in the Appendix strongly uses the fact that the construction of $Q^T$ employs the quadratic cost optimal transport map. In general, the Lipschitz regularity of optimal transport beyond log-concavity is very difficult to establish. This suggests that a different approach may have to be employed to extend Theorem \ref{thm:euclidean-case} to non-\eqref{eq:OU} reference processes, perhaps using the non-optimal but Lipschitz Kim-Milman transport map as developed in \cite{kim-millman-map12} and whose Lipschitz properties were studied in \cite{msLipshiz23,fms-lip-24,ce2025-lip}.

\section{Diffusion Approximation: Compact Manifold}\label{sec:manifold}
Throughout this section, let $(M,g)$ be a compact manifold with the reference process \eqref{eq:reference-process-intro} and reference measure $\diffref \in \cP(M)$ defined in \eqref{eq:ref-measure} and related spectral quantities as defined in Section \ref{sec:preliminaries}.

First, we extend the diffusion approximation \eqref{defn:diffusion-approx} to the manifold setting. Indeed, the definition is the same formula symbol-for-symbol, with each quantity being replaced by its manifold quantity as defined in Section \ref{sec:preliminaries}. The only modification is with regard to the optimal transport map. Here, we let $\Phi_{T/2}: M \to M$ denote the optimal transport map from $R_{T/2}^*\mu$ to $R_{T/2}^*\nu$ with respect to the cost $d^2(\cdot,\cdot)$, which is guaranteed to exist under our hypotheses as explained in Section \ref{subsec:ot-prelim}. Altogether then,
\begin{align}\label{defn:diffusion-approx-manifold}
    \frac{dQ^{T}}{dR^{T}}(x) &:= \frac{\rho(x_0)\sigma(x_T)}{R_{T/2}\sigma(\Phi_{T/2}(x_{T/2}))} \cdot \frac{r_{T/2}(\Phi_{T/2}(x_{T/2}),x_{T})}{r_{T/2}(x_{T/2},x_{T})}
\end{align} 
With this definition, Main Result \ref{mainresult} in this setting becomes
\begin{theorem}\label{thm:compact-manifold}
    Let $(M,g)$ be a compact manifold with reference process having a spectral gap $\lambda_1> 0$. For $\mu,\nu \in \cP_2(M)$ satisfying Assumption \ref{assumption},   
    \begin{align}
        \limsup\limits_{T \to +\infty} \exp(\lambda_{1} T)(H(Q^{T}|P^{T})+H(P^{T}|Q^{T})) < +\infty. 
    \end{align}
\end{theorem}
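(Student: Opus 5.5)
The plan is to exploit the cancellation of potentials in the symmetric relative entropy. Since $dP^T/dR^T = a^T(x_0)b^T(x_T)$ and $dQ^T/dR^T$ is given by \eqref{defn:diffusion-approx-manifold}, we have
\begin{align*}
H(Q^T|P^T)+H(P^T|Q^T) = \int \log\frac{dQ^T}{dR^T}\, d(Q^T-P^T) - \int \log\frac{dP^T}{dR^T}\, d(Q^T-P^T).
\end{align*}
Both $P^T$ and $Q^T$ lie in $\Pi(\mu,\nu)$, so the second integral vanishes: $\log a^T(x_0)+\log b^T(x_T)$ depends only on the endpoints. For the same reason, the terms $\log\rho(x_0)+\log\sigma(x_T)$ in $\log dQ^T/dR^T$ cancel. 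What remains is
\begin{align*}
\int F_T(x_{T/2},x_T)\, d\bigl(Q^T-P^T\bigr), \qquad F_T(y,z) := \log\frac{r_{T/2}(\Phi_{T/2}(y),z)}{r_{T/2}(y,z)} - \log R_{T/2}\sigma(\Phi_{T/2}(y)).
\end{align*}
This depends only on the joint law of $(x_{T/2},x_T)$ under $Q^T$ and $P^T$. Under $Q^T$, by \eqref{eq:triplet-ou} this joint density with respect to $\diffref^{\otimes 2}$ is $R_{T/2}\rho(y)\,\sigma(z)r_{T/2}(z,\Phi_{T/2}(y))/R_{T/2}\sigma(\Phi_{T/2}(y))$. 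Under $P^T$ it is $R_{T/2}a^T(y)\,b^T(z)r_{T/2}(y,z)$.

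The key spectral input is uniform heat-kernel closeness on a compact manifold. Writing $r_t(x,y)=1+\sum_{j\ge1}e^{-\lambda_j t}\psi_j(x)\psi_j(y)$, standard ultracontractivity (a uniform bound on $\sum_j e^{-\lambda_j t_0}\psi_j^2$) gives $\sup_{x,y}|r_t(x,y)-1|\le Ce^{-\lambda_1 t}$ for $t\ge t_0$. The same argument gives $\|R_t f-\int f\,d\diffref\|_\infty\le Ce^{-\lambda_1 t}\|f\|_{L^2(\diffref)}$. Hence $r_{T/2}$, $R_{T/2}\sigma$, $R_{T/2}\rho$ and $R_{T/2}a^T$ are all $1+O(e^{-\lambda_1 T/2})$ uniformly; for the last one we use Proposition \ref{prop:schro-limit-facts} and $\int a^T d\diffref\to1$. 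It follows that $\|F_T\|_\infty = O(e^{-\lambda_1 T/2})$. We cannot merely bound the total variation of $Q^T-P^T$ by $O(e^{-\lambda_1 T/2})$ in a crude way, because we also need the difference of the two joint densities in $(y,z)$ to be $O(e^{-\lambda_1 T/2})$ uniformly. Expanding both densities shows each equals $\sigma(z)$ times $1+O(e^{-\lambda_1 T/2})$ in sup norm, modulo the discrepancy $b^T$ versus $\sigma$ under $P^T$. I would handle that discrepancy by integrating in $y$ first. The $z$-marginal is exactly $\nu$ for both measures, and $F_T$ can be split as $F_T = G_T(y,z) + H_T(y)$. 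Here $H_T(y)=-\log R_{T/2}\sigma(\Phi_{T/2}(y))$ pairs with the difference of $x_{T/2}$-marginals. The marginals are $R_{T/2}\rho$ and $R_{T/2}a^T R_{T/2}b^T$, both $1+O(e^{-\lambda_1T/2})$, so their difference in $L^1$ is $O(e^{-\lambda_1 T/2})$, and the term is $O(e^{-\lambda_1 T})$. For $G_T=\log r_{T/2}(\Phi(y),z)-\log r_{T/2}(y,z)$, linearise: $G_T = \sum_{j\ge1}e^{-\lambda_j T/2}(\psi_j(\Phi(y))-\psi_j(y))\psi_j(z) + O(e^{-\lambda_1 T})$. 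Under either measure, conditionally on $y$, the law of $z$ has density $\sigma(z)(1+O(e^{-\lambda_1T/2}))$ relative to $\diffref$ (up to normalisation), so integrating against the difference of the two measures again yields $O(e^{-\lambda_1T/2})\cdot O(e^{-\lambda_1T/2})$.

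Hence the total is $O(e^{-\lambda_1 T})$, which is the claim. The main obstacle I expect is the uniformity step for $P^T$'s conditional law of $x_T$ given $x_{T/2}$. That conditional law is $b^T(z)r_{T/2}(y,z)/R_{T/2}b^T(y)$, whereas $Q^T$ uses $\sigma$. A naive bound would give only $\|b^T-\sigma\|$, which has no rate from Proposition \ref{prop:schro-limit-facts}. To get around this, I would use the fact that $\nu = \sigma\diffref$ forces $b^T\,R_T a^T=\sigma$. This shows $b^T=\sigma(1+O(e^{-\lambda_1 T}))/\!\int a^T$, giving an exponential rate. Note that the optimal transport map $\Phi_{T/2}$ only enters through bounded functions evaluated at the point, so no regularity of it is needed, unlike Proposition \ref{prop:prop-of-log-conc}.
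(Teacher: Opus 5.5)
Your argument is correct, and it gets the rate by a different mechanism than the paper.

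\textbf{Where the two proofs diverge.} Both proofs start from the same cancellation of the endpoint terms. This leaves $\int F_T\,d(Q^T-P^T)$, with $F_T$ a function of $(x_{T/2},x_T)$ only. The paper then handles the two pieces of $F_T$ separately.
\begin{itemize}
\item The $\log R_{T/2}\sigma\circ\Phi_{T/2}$ piece goes through Proposition \ref{prop:first-half-symm-rel-ent}. That is a one-sided bound built from $H(R_{T/2}^*\nu|\diffref)\ge 0$, Cauchy--Schwarz against $\chi^2(\mu_{T/2}^T\|\diffref)$, and the gradient decay of Proposition \ref{prop:bgl-8-6-1}.
\item The heat-kernel ratio is bounded in absolute value \emph{separately} under $P^T$ and under $Q^T$. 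This combines the log-Lipschitz estimate of Proposition \ref{prop:heat-kernel-log-bdd}, whose constant is $O(e^{-\lambda_1T/2})$, with $\Exp{\diffref}[d(x,\Phi_{T/2}(x))]=O(e^{-\lambda_1T/2})$. The latter rests on the $\Was{2}$ decay of Proposition \ref{prop:bgl-9-7-2} and on the optimality of $\Phi_{T/2}$.
\end{itemize}
So in the paper the second factor $e^{-\lambda_1T/2}$ comes from $\Phi_{T/2}$ being close to the identity. In your proof it comes from cancellation between $Q^T$ and $P^T$. You show $\|F_T\|_\infty=O(e^{-\lambda_1T/2})$ and that the two laws of $(x_{T/2},x_T)$ are $O(e^{-\lambda_1T/2})$ apart in total variation. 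Both facts follow from the ultracontractive bound $\sup_{x,y}|r_t(x,y)-1|\le Ce^{-\lambda_1 t}$. That bound comes from the eigenfunction expansion and $\sup_z r_{t_0}(z,z)<\infty$, a fact the paper itself uses in the proof of Theorem \ref{thm:compact-manifold-same}.

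\textbf{What each route buys.} Your route is shorter and needs no Wasserstein or gradient estimates. It also shows that on a compact manifold the optimality of $\Phi_{T/2}$ plays no role: any measurable map pushing $R_{T/2}^*\mu$ to $R_{T/2}^*\nu$ gives the same rate. The paper's route is the one that carries over to the Ornstein--Uhlenbeck setting of Theorem \ref{thm:euclidean-case}. There $r_{T/2}$ is not uniformly close to $1$, $\log r_{T/2}$ and $\log R_{T/2}\sigma$ grow quadratically, and $F_T$ is unbounded. So one genuinely needs $\nabla\varphi_{T/2}$ close to the identity (Proposition \ref{prop:prop-of-log-conc}) rather than a sup-norm bound.

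\textbf{Two small remarks.}
\begin{itemize}
\item The split $F_T=G_T+H_T$ and the linearisation of $G_T$ are unnecessary. Let $p,q$ be the densities of $(x_{T/2},x_T)$ under $P^T,Q^T$ with respect to $\diffref^{\otimes 2}$. Then $\bigl|\int F_T(q-p)\,d\diffref^{\otimes 2}\bigr|\le\|F_T\|_\infty\|q-p\|_{L^1(\diffref^{\otimes 2})}$ already gives $O(e^{-\lambda_1T})$.
\item Your claim $R_{T/2}a^T=1+O(e^{-\lambda_1T/2})$ needs a rate for $\int a^T\,d\diffref\to 1$, which Proposition \ref{prop:schro-limit-facts} alone does not provide. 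The rate follows from $1=\int b^T\,d\diffref=\int \sigma/R_Ta^T\,d\diffref$ together with $R_Ta^T=\int a^T\,d\diffref+O(e^{-\lambda_1T})$, or directly from \eqref{eq:limit-l1-norm-aT}. In fact, in $p(y,z)=R_{T/2}a^T(y)\,b^T(z)\,r_{T/2}(y,z)$ the factor $\int a^T\,d\diffref$ cancels between $R_{T/2}a^T$ and $b^T=\sigma/R_Ta^T$. So only a lower bound on $\int a^T\,d\diffref$ is needed there.
\end{itemize}
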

Because $M$ is compact, there is no need to control the regularity of the optimal transport maps $(\Phi_{T/2}, T > 0)$ as in Proposition \ref{prop:prop-of-log-conc}. This gives a simplified argument.

Lastly, we show that this rate of approximation cannot in general be improved. To do this, we specialize to the same-marginal case (i.e.\ $\mu = \nu$) to simplify the argumentation. Indeed, in this case we can precisely calculate the rescaled limit in terms of the spectral decomposition of the density $\rho$. 
\begin{theorem}\label{thm:compact-manifold-same}
    Retain the setting of Theorem \ref{thm:compact-manifold}. When $\mu = \nu$, it holds that
    \begin{align}
        \lim\limits_{T \to +\infty} \exp(\lambda_1 T)(H(Q^{T}|P^{T})+H(P^{T}|Q^{T})) &= \sum_{i \in \mathcal{I}_{1}}p_i^2,
    \end{align}
    where $p_i = \langle \rho, \psi_i \rangle_{L^2(\diffref)}$. 
\end{theorem}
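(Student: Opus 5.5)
Since $\mu=\nu$, the optimal transport map $\Phi_{T/2}$ from $R_{T/2}^*\mu$ to itself is the identity. Hence \eqref{defn:diffusion-approx-manifold} reduces to
\[
\frac{dQ^T}{dR^T}(x)=\frac{\rho(x_0)\rho(x_T)}{R_{T/2}\rho(x_{T/2})},
\]
while \eqref{eq:product-decomp} gives $dP^T/dR^T=a^T(x_0)b^T(x_T)$. The plan is to write the symmetric entropy as $\int \log(dP^T/dQ^T)\,d(P^T-Q^T)$. Both $P^T$ and $Q^T$ lie in $\Pi(\mu,\mu)$, so every term depending only on $x_0$ or only on $x_T$ integrates to the same value under both measures and cancels. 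This leaves
\[
H(Q^T|P^T)+H(P^T|Q^T)=\int_M \log f_T\,(g_T-f_T)\,d\diffref,
\]
where $f_T:=R_{T/2}\rho$ is the time-$T/2$ marginal density of $Q^T$ and $g_T:=R_{T/2}a^T\,R_{T/2}b^T$ is that of $P^T$. Both are densities, so $\int(g_T-f_T)\,d\diffref=0$.

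Next I would expand both densities spectrally. Writing $\varepsilon_T:=f_T-1=\sum_{i\ge1}p_ie^{-\lambda_iT/2}\psi_i$ gives $\log f_T=\varepsilon_T+O(\varepsilon_T^2)$. By the symmetry $\mu=\nu$ and uniqueness of the $(f,g)$-decomposition, $a^T=\kappa_Tb^T$ for a constant $\kappa_T$. Proposition \ref{prop:schro-limit-facts} gives $a^T,b^T\to\rho$ in $L^2(\diffref)$, so $\kappa_T\to1$. Set $b^T=\sum_j\beta_j^T\psi_j$; then $\beta_0^T\to1$ and $\beta_i^T\to p_i$. Therefore
\[
g_T=\kappa_T\Bigl(\beta^T_0+\sum_{i\ge1}\beta_i^Te^{-\lambda_iT/2}\psi_i\Bigr)^2
=\kappa_T(\beta_0^T)^2+\sum_{i\ge1}2\kappa_T\beta_0^T\beta_i^Te^{-\lambda_iT/2}\psi_i+\kappa_T\bigl(R_{T/2}b^T-\beta_0^T\bigr)^2 .
\]
In the pairing with $\varepsilon_T$, the constant term drops because $\varepsilon_T$ has zero mean. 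The linear term contributes
\[
\sum_{i\ge1}p_i\bigl(2\kappa_T\beta_0^T\beta_i^T-p_i\bigr)e^{-\lambda_iT},
\]
whose coefficients tend to $p_i(2p_i-p_i)=p_i^2$. After multiplying by $e^{\lambda_1T}$, only the indices $i\in\mathcal{I}_1$ survive in the limit, and the tail $i\notin\mathcal I_1$ is bounded by $e^{-(\lambda_{j}-\lambda_1)T}\|\rho\|_2\sup_T\|b^T\|_2\to0$, where $\lambda_j>\lambda_1$ is the next distinct eigenvalue. This produces exactly $\sum_{i\in\mathcal I_1}p_i^2$.

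It remains to show the remainders are $o(e^{-\lambda_1T})$: namely $\int\varepsilon_T\,\kappa_T(R_{T/2}b^T-\beta_0^T)^2$ and $\int O(\varepsilon_T^2)(g_T-f_T)$. Both are cubic in quantities of size $e^{-\lambda_1T/2}$, provided I have $L^\infty$ control. On the compact manifold I would use ultracontractivity of $R_t$ (heat kernel bounds for $t\ge1$) together with the spectral gap:
\[
\|R_{T/2}h-\textstyle\int h\,d\diffref\|_{L^\infty}\le Ce^{-\lambda_1(T/2-1)}\|h\|_{L^2}.
\]
Combined with the uniform bounds \eqref{eq:uniform-bdd-potents}, this gives $\|\varepsilon_T\|_\infty$, $\|R_{T/2}b^T-\beta_0^T\|_\infty$ and $\|g_T-f_T\|_\infty$ all of order $O(e^{-\lambda_1T/2})$. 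It also gives $f_T\ge1/2$ for large $T$, which justifies the Taylor expansion of $\log$. Every remainder is then $O(e^{-3\lambda_1T/2})$.

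The main obstacle is this uniform $L^\infty$ control. The delicate points are the convergence $\beta_i^T\to p_i$ simultaneously for all $i\in\mathcal I_1$, and the uniform summability of the tail; I would deduce both from $L^2$ convergence of $b^T$ and Cauchy--Schwarz.
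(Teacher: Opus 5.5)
Your proposal is correct and follows essentially the paper's route. With $\Phi_{T/2}=\mathrm{Id}$ and symmetric potentials, both arguments reduce the symmetric entropy to $\int_M \log R_{T/2}\rho\,(R_{T/2}a^T R_{T/2}b^T-R_{T/2}\rho)\,d\diffref$, extract the $\mathcal{I}_1$ modes from the spectral expansions, and kill the quadratic remainders on compact $M$ using the heat-kernel bound $|R_t f|\le (\sup_z r_{2t}(z,z))^{1/2}\|f\|_{L^2(\diffref)}$. The only difference is bookkeeping: the paper shows each rescaled factor converges in $L^2(\diffref)$ to $\sum_{i\in\mathcal{I}_1}p_i\psi_i$ (via an $L^2\to L^4$ bound) and pairs the limits, whereas you pair first and control the remainders in $L^\infty$.
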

A benefit of Theorem \ref{thm:compact-manifold-same} is that it highlights that the proof approach for Theorems \ref{thm:euclidean-case} and \ref{thm:compact-manifold} cannot be meaningfully improved. Indeed, the sum of the two relative entropies in the two theorems results in two terms that we analyze separately in the Appendix: a diffusion term in Proposition \ref{prop:first-half-symm-rel-ent}, and a heat kernel comparison term in Propositions \ref{prop:heat-kernel-log-bdd} and \ref{prop:ou-density-ratio}. The heat kernel comparison arguments are rather loose, but Theorem \ref{thm:compact-manifold-same} demonstrates that any sufficient tightening of these argument will not in general improve the quantitative exponential decay of the relative entropy approximation $Q^{T}$ gives to $P^{T}$. 

\section{A Large Time Decomposition Property}\label{sec:large-time-semgroup}
We now state and prove the structure result for $\Schro$ bridges discussed in the paragraph after the statement of Main Result \ref{mainresult}.

First, we fix notation and explain the construction. Let $(M,g)$ be a compact smooth Riemannian manifold. Let $\mu,\nu \in \cP_{2}(M)$ satisfy Assumption \ref{assumption}, and fix $T > 0$. Let $P_{\mu}^{T/2}, P_{\nu}^{T/2}$ denote the $\Schro$ bridge with respect to $R^{T/2}$ from $\mu$ to $\diffref$ and from $\diffref$ to $\nu$, respectively. We write the following product decompositions, following the convention that $\norm{b^{T/2}}_{L^{1}(\diffref)} = \norm{\beta^{T/2}}_{L^{1}(\diffref)} = 1$:
\begin{align}
    \frac{dP_{\mu}^{T/2}}{dR^{T/2}} = a^{T/2}(x_0)b^{T/2}(x_{T/2}), \quad \frac{dP_{\nu}^{T/2}}{dR^{T/2}} &= \alpha^{T/2}(x_0)\beta^{T/2}(x_{T/2}). 
\end{align}
Let $\gamma_{T/2}(\cdot|x_{0})$ denote the conditional distribution of $x_{T/2}|x_0$ under $P_{\nu}^{T/2}$. Then it has density with respect to $\diffref$ equal to
\begin{align}
    \frac{\gamma_{T/2}(\cdot|x_{0})}{d\diffref}(x_{T/2}) &= \frac{\beta^{T/2}(x_{T/2})r_{T/2}(x_{0},x_{T/2})}{R_{T/2}\beta^{T/2}(x_{0})}.
\end{align}
Now we construct a path measure $Q^{T}$ by gluing together these two $\Schro$ bridges along their common marginal $\diffref$. First, take $P_{\mu}^{T/2}$ as is, then define $x_{T}|x_{T/2}$ to have the distribution $\gamma_{T/2}$. This defines a triplet of random variables $(X_0,X_{T/2},X_{T})$ with marginals $\mu,\diffref,\nu$, respectively, and joint density with respect to $\diffref^{\otimes 3}$ equal to
\begin{align}
    a^{T/2}(x_0)b^{T/2}(x_{T/2}) \cdot \frac{\beta^{T/2}(x_{T})}{R_{T/2}\beta^{T/2}(x_{T/2})} \cdot r_{T/2}(x_0,x_{T/2})r_{T/2}(x_{T/2},x_{T}).
\end{align}
We define $Q^{T}$ as
\begin{align}\label{eq:semigrp-sb-approx}
    \frac{dQ^{T}}{dR^{T}} &:= a^{T/2}(x_0) b^{T/2}(x_{T/2})  \cdot \frac{\beta^{T/2}(x_{T})}{R_{T/2}\beta^{T/2}(x_{T/2})}.
\end{align}
The following result quantifies the large time approximation $Q^{T}$ provides to the $\Schro$ bridge at temperature $T$ from $\mu$ to $\nu$. 
\begin{theorem}\label{thm:semigrp-large-temp}
    Retain the setting of Theorem \ref{thm:compact-manifold}. Fix $\mu,\nu \in \cP_2(M)$ satisfying Assumption \ref{assumption}. Let $P^{T}$ denote the $\Schro$ bridge from $\mu$ to $\nu$ computed with reference $R^{T}$, and let $Q^{T}$ denote the path measure defined in \eqref{eq:semigrp-sb-approx}. It holds that
    \begin{align}
        \limsup\limits_{T \to +\infty} \exp(\lambda_1 T)\left(H(Q^{T}|P^{T})+H(P^{T}|Q^{T})\right) < +\infty.
    \end{align}
\end{theorem}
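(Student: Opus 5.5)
The plan is to compute the symmetric relative entropy in closed form and then expand it spectrally. Both $P^T$ and $Q^T$ are absolutely continuous with respect to $R^T$, so
\begin{align*}
H(Q^T|P^T)+H(P^T|Q^T) = \int \log\frac{dQ^T}{dP^T}\, d(Q^T-P^T).
\end{align*}

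The first step is to substitute the explicit densities. Write $\frac{dP^T}{dR^T} = a^T(x_0)b^T(x_T)$ and use \eqref{eq:semigrp-sb-approx} for $Q^T$. Both path measures lie in $\Pi(\mu,\nu)$, so the terms $\log a^T(x_0)$ and $\log b^T(x_T)$ integrate to the same value under $Q^T$ and $P^T$, and they cancel. The same holds for $\log a^{T/2}(x_0)$ and $\log \beta^{T/2}(x_T)$.

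Under $Q^T$ the time-$T/2$ marginal is exactly $\diffref$, while under $P^T$ it is $\mu^T_{T/2}$. We are therefore left with
\begin{align*}
\int_M f_T \, d(\diffref - \mu^T_{T/2}), \qquad f_T := \log b^{T/2} - \log R_{T/2}\beta^{T/2}.
\end{align*}
This is the key reduction. The whole question becomes how close $\mu^T_{T/2}$ is to $\diffref$, times the size of $f_T$.

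The second step is to show that $\|f_T\|_{L^\infty}$ decays like $e^{-\lambda_1 T/4}$ or better.

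For $b^{T/2}$, use the Schrödinger system for the bridge from $\mu$ to $\diffref$, namely $b^{T/2}R_{T/2}a^{T/2} = 1$. This gives $\log b^{T/2} = -\log R_{T/2}a^{T/2}$.

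Next, expand $a^{T/2} = \sum_j \langle a^{T/2},\psi_j\rangle \psi_j$. This yields
\begin{align*}
\|R_{T/2}a^{T/2} - \langle a^{T/2},1\rangle\|_{L^\infty} \le C e^{-\lambda_1 T/2}.
\end{align*}
For this bound I would use the uniform $L^\infty$ bounds of Proposition \ref{prop:schro-limit-facts}, which I expect to apply verbatim to marginals $(\mu,\diffref)$. I would combine them with ultracontractivity on a compact $M$: write $R_{T/2} = R_{1}R_{T/2-1}$, apply the spectral gap in $L^2$ to $R_{T/2-1}$, and bound $\|R_1\|_{L^2\to L^\infty}$.

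Integrating the system shows $\langle a^{T/2},1\rangle\langle b^{T/2},1\rangle = 1$. With the normalization $\|b^{T/2}\|_{L^1}=1$, this gives $\langle a^{T/2},1\rangle = 1$. Hence $\|\log b^{T/2}\|_\infty = O(e^{-\lambda_1 T/2})$.

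The same argument applies to $R_{T/2}\beta^{T/2}$. Here the Schrödinger system gives $\alpha^{T/2} R_{T/2}\beta^{T/2} = 1$, and $\langle \beta^{T/2},1\rangle = 1$. So $\|\log R_{T/2}\beta^{T/2}\|_\infty = O(e^{-\lambda_1 T/2})$ as well.

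The third step is to bound the total variation $\|\mu^T_{T/2}-\diffref\|_{TV}$ by $O(e^{-\lambda_1 T/2})$. We have
\begin{align*}
\frac{d\mu^T_{T/2}}{d\diffref} = R_{T/2}a^T \cdot R_{T/2}b^T,
\end{align*}
and each factor is within $Ce^{-\lambda_1 T/2}$ in $L^\infty$ of its mean, by the same spectral and ultracontractivity argument together with \eqref{eq:uniform-bdd-potents}. The product of the two means equals $1$, since $\mu^T_{T/2}$ is a probability measure. Hence the density is $1 + O(e^{-\lambda_1 T/2})$ uniformly.

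Multiplying the two bounds gives
\begin{align*}
H(Q^T|P^T)+H(P^T|Q^T) \le 2\|f_T\|_\infty \,\|\mu^T_{T/2}-\diffref\|_{TV} = O(e^{-\lambda_1 T}),
\end{align*}
which is the claim.

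I expect the main obstacle to be the uniform $L^\infty$ control in step two. It requires the constants of Proposition \ref{prop:schro-limit-facts} to hold uniformly for the auxiliary bridges, whose second marginal is $\diffref$, and to hold for large $T$. It also requires that the passage from $L^2$ spectral decay to $L^\infty$ decay (via a bound $\sup_x r_1(x,\cdot)\le C$ on compact $M$) does not lose the exponent. If a sup-norm bound for $R_{T/2}\beta^{T/2}$ proves delicate, I would instead pair $f_T$ against the density difference in $L^2(\diffref)$. That only needs $L^2$ spectral estimates for $f_T$ and for $R_{T/2}a^TR_{T/2}b^T - 1$, together with a crude lower bound on $R_{T/2}\beta^{T/2}$ to control the logarithm.
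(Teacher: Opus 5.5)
Your reduction to $\int_M f_T\,d(\diffref-\mu^T_{T/2})$, with $f_T=\log b^{T/2}-\log R_{T/2}\beta^{T/2}$, is exactly the paper's. Your overall argument works once one step is repaired (second paragraph), but the way you pair $f_T$ against the measure difference is different.

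The paper uses Cauchy--Schwarz in $L^2(\diffref)$. It bounds the expression by $\sqrt{\chi^2(\mu^T_{T/2}\|\diffref)}$, controlled by Proposition \ref{prop:chi-sqrd-bdd}, times $\|\log R_{T/2}\beta^{T/2}+\log R_{T/2}a^{T/2}\|_{L^2(\diffref)}$. It then handles the logarithms pointwise via $\frac{x}{1+x}\le\log(1+x)\le x$ and the Schr\"odinger identities, e.g.\ $|\log R_{T/2}\beta^{T/2}|\le(\alpha^{T/2}+1)|R_{T/2}\beta^{T/2}-1|$. So it needs only $L^2$ spectral decay plus $L^\infty$ bounds on the potentials; this is your fallback.

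Your main route upgrades spectral decay to uniform decay via $\|R_1\|_{L^2\to L^\infty}\le(\sup_z r_2(z,z))^{1/2}$. This makes the logarithms trivial and lets you pair $\|f_T\|_\infty$ with a total-variation bound. It is legitimate on compact $M$ (the paper uses the same bound in proving Theorem \ref{thm:compact-manifold-same}), and it yields the stronger uniform statement that the time-$T/2$ density is $1+O(e^{-\lambda_1T/2})$. However, it rests on ultracontractivity, which fails for \eqref{eq:OU} on $\mathbb{R}^d$; the $L^2$ argument is more portable.

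You use one false identity twice.

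\textbf{First use.} Integrating $b^{T/2}R_{T/2}a^{T/2}=1$ against $\diffref$ gives $\langle R_{T/2}a^{T/2},b^{T/2}\rangle_{L^2(\diffref)}=1$. In spectral coordinates this reads $\langle a^{T/2},1\rangle+\sum_{i\ge1}e^{-\lambda_iT/2}\langle a^{T/2},\psi_i\rangle\langle b^{T/2},\psi_i\rangle=1$, not $\langle a^{T/2},1\rangle\langle b^{T/2},1\rangle=1$. In fact $\langle a^{T/2},1\rangle=\int_M R_{T/2}a^{T/2}\,d\diffref=\int_M (b^{T/2})^{-1}\,d\diffref\ge 1$ by Jensen, strictly unless $b^{T/2}$ is constant.

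\textbf{Second use.} $\mu^T_{T/2}$ being a probability measure does not force $\langle a^T,1\rangle\langle b^T,1\rangle=\langle a^T,1\rangle$ to equal $1$. By \eqref{eq:limit-l1-norm-aT}, $e^{\lambda_1T}(1-\langle a^T,1\rangle)\to\sum_{i\in\mathcal{I}_1}p_iq_i$, which is generally nonzero.

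\textbf{Repair.} Each case takes one line, and the conclusions you need survive.
\begin{itemize}
\item You have shown $\|R_{T/2}a^{T/2}-c_T\|_\infty\le Ce^{-\lambda_1T/2}$ with $c_T=\langle a^{T/2},1\rangle\ge 1$. Hence $(R_{T/2}a^{T/2})^{-1}=c_T^{-1}(1+O(e^{-\lambda_1T/2}))$ uniformly. Integrating against $\diffref$ and using $\int_M b^{T/2}\,d\diffref=1$ forces $c_T=1+O(e^{-\lambda_1T/2})$, so $\|\log b^{T/2}\|_\infty=O(e^{-\lambda_1T/2})$.
\item The density $R_{T/2}a^TR_{T/2}b^T$ is uniformly within $Ce^{-\lambda_1T/2}$ of $\langle a^T,1\rangle$. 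Integrating it gives $|1-\langle a^T,1\rangle|\le Ce^{-\lambda_1T/2}$.
\end{itemize}
With these corrections your bound $2\|f_T\|_\infty\|\mu^T_{T/2}-\diffref\|_{TV}=O(e^{-\lambda_1T})$ goes through.
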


\begin{proof}
    Recall the product structure that $P^{T}$ has with respect to $R^{T}$. Since $x_0$ and $x_{T}$ have the same marginal distributions under $P^{T}$ and $Q^{T}$,
\begin{align}
    H(Q^{T}|P^{T})+H(P^{T}|Q^{T}) &= \left(\Exp{Q^{T}}-\Exp{P^{T}}\right)\left[\log\left(\frac{b^{T/2}(x_{T/2})}{R_{T/2}\beta^{T/2}(x_{T/2})}\right)\right].
\end{align}
Moreover, $x_{T/2}$ is distributed as $\diffref$ under $Q^{T}$ and $\mu_{T/2}^{T}$ under $P^{T}$, giving that
\begin{align}
     H(Q^{T}|P^{T})+H(P^{T}|Q^{T}) &= \Exp{\diffref}\left[\left(1-\frac{d\mu_{T/2}^{T}}{d\diffref}\right)\log\left(\frac{b^{T/2}(x_{T/2})}{R_{T/2}\beta^{T/2}(x_{T/2})}\right)\right].
\end{align}
The $\Schro$ system gives that $1 = R_{T/2}a^{T/2}b^{T/2}$, so applying Cauchy-Schwarz gives that
\begin{align}
    H(Q^{T}|P^{T})+H(P^{T}|Q^{T}) \leq \sqrt{\chi^2(\mu_{T/2}^T||\diffref)} \cdot \norm{\log(R_{T/2}\beta^{T/2})+\log(R_{T/2}a^{T/2})}_{L^2(\diffref)}
\end{align}
By Proposition \ref{prop:chi-sqrd-bdd}, $\sqrt{\chi^2(\mu_{T/2}^T||\diffref)}$ decays with $\exp(-\lambda_1 T/2)$. We will further argue that the $L^2(\diffref)$-norm terms are both $\exp(-\lambda_1 T/2)$. To see this, recall the numerical inequality $\frac{x}{x+1} \leq \log(1+x) \leq x$ for all $x > -1$. Moreover, as $1 = R_{T/2}a^{T/2}\beta^{T/2}$ and $1 = \alpha^{T/2} R_{T/2}\beta^{T/2}$, it holds that
\begin{align*}
    \abs{\log(R_{T/2}\beta^{T/2})} \leq (\alpha^{T/2}+1)\abs{R_{T/2}\beta^{T/2}-1}, \quad \abs{\log(R_{T/2}a^{T/2})} \leq (b^{T/2}+1)\abs{R_{T/2}a^{T/2}-1}.
\end{align*}
By Proposition \ref{prop:schro-limit-facts}, $\norm{\alpha^{T/2}}_{L^{\infty}(\diffref)}, \norm{b^{T/2}}_{L^{\infty}(\diffref)}$ stay bounded as $T \to +\infty$. Repeating the arguments from Theorem \ref{thm:next-term-entropic-cost} establish that $\norm{R_{T/2}\beta^{T/2}-1}_{L^2(\diffref)}, \norm{R_{T/2}a^{T/2}-1}_{L^2(\diffref)}$ both decay at rate $\exp(-\lambda_1 T/2)$, completing the argument. 
\end{proof}

\section{Appendix}
Before presenting the proofs, we fix notation and recall some important results. Throughout all proofs, Assumption \ref{assumption} is in effect as well as all the notation developed in Section \ref{sec:preliminaries}. 

\subsection{Notation and Preliminary Results}
Recall $a^T, b^T: M \to [0,+\infty)$ as defined in \eqref{eq:product-decomp} with the convention that $\int_{M} b^{T} d\diffref = 1$. By Proposition \ref{prop:schro-limit-facts}, $a^{T}, b^T \in L^2(\diffref)$ and thus there exist sequences $(\alpha_{i}(T),i \geq 1)$ and $(\beta_{i}(T), i \geq 1)$ such that in $L^2(\diffref)$
\begin{align}\label{eq:spectral-expn-potents}
    a^{T} = \left(\int_{M} a^{T}d\diffref\right)+\sum_{i=1}^{\infty} \alpha_i(T) \psi_i, \quad b^{T} = 1+\sum_{i=1}^{\infty} \beta_i(T) \psi_i
\end{align}
As $\rho,\sigma \in L^2(\diffref)$ as well, there exist sequences $(p_i, i \geq 1)$ and $(q_i, i \geq 1)$ such that in $L^2(\diffref)$
\begin{align}
    \rho = 1 + \sum_{i=1}^{\infty} p_{i}\psi_i, \quad \sigma = 1 + \sum_{i=1}^{\infty} q_i \psi_{i}.
\end{align}

Moreover, by the $L^2(\diffref)$ convergence in Proposition \ref{prop:schro-limit-facts}, it holds that
\begin{align}
    \lim\limits_{T \to +\infty} \left(\int_{M} a^{T} d\diffref-1\right)^2 + \sum_{i=1}^{\infty} (\alpha_{i}(T)-p_{i})^2= \lim\limits_{T \to +\infty} \sum_{i=1}^{\infty} (\beta_{i}(T)-q_{i})^2 = 0. 
\end{align}

Before beginning the relevant proofs, we recall two basic facts that will be applied several times throughout the proofs. Both of these facts are consequences of $\mathrm{CD}(\kappa,\infty)$, although to avoid introducing this assumption in the compact manifold case we present proofs using spectral analysis. This approach results in weaker bounds that retain the desired asymptotic decay. 
\begin{proposition}[Heat Flow Decay]\label{prop:bgl-9-7-2}
    Let $(R_{t}, t \geq 0)$ be a diffusion semigroup, and let $\mu$ and $\nu$ satisfy Assumption \ref{assumption}. Then there exist $C > 0$ and $T_0 > 0$ (both depending on $\mu,\nu$) such that for all $t \geq T_0$,
    \begin{align*}
        \Was{2}(R_{t}^*\mu,R_{t}^*\nu) \leq C\exp(-\lambda_1 t). 
    \end{align*}
\end{proposition}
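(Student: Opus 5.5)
The plan is to compare $R_t^*\mu$ and $R_t^*\nu$ with the stationary measure $\diffref$ in a stronger divergence, and then return to $\Was{2}$ through a transport inequality. By the triangle inequality,
\begin{align*}
    \Was{2}(R_t^*\mu,R_t^*\nu) \leq \Was{2}(R_t^*\mu,\diffref)+\Was{2}(\diffref,R_t^*\nu),
\end{align*}
so it is enough to show $\Was{2}(R_t^*\mu,\diffref)\leq C\exp(-\lambda_1 t)$. The same argument then applies to $\nu$ with $\sigma$ in place of $\rho$.

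\textbf{Step 1 (spectral decay of the density).} The density of $R_t^*\mu$ with respect to $\diffref$ is $R_t\rho$. Expanding $\rho = 1+\sum_{i\geq 1}p_i\psi_i$ in $L^2(\diffref)$, which is legitimate since $\rho\in L^\infty$ by Assumption \ref{assumption}, gives
\begin{align*}
    \norm{R_t\rho-1}_{L^2(\diffref)}^2=\sum_{i\geq1}e^{-2\lambda_i t}p_i^2\leq e^{-2\lambda_1 t}\norm{\rho-1}_{L^2(\diffref)}^2 .
\end{align*}
Hence $\chi^2(R_t^*\mu\|\diffref)\leq e^{-2\lambda_1 t}\chi^2(\mu\|\diffref)$. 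Moreover $H(R_t^*\mu|\diffref)\leq \log(1+\chi^2)\leq \chi^2 \leq C e^{-2\lambda_1 t}$.

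\textbf{Step 2 (from entropy to $\Was{2}$).} On $\mathbb{R}^d$ with $\diffref=N(0,\Id)$, Talagrand's inequality $\Was{2}^2(\eta,\diffref)\leq 2H(\eta|\diffref)$ gives $\Was{2}(R_t^*\mu,\diffref)\leq \sqrt{2C}\,e^{-\lambda_1 t}$ directly.

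On a compact manifold I want to avoid curvature, so I will not invoke a Talagrand inequality, which typically requires a $\mathrm{CD}$ condition. Instead I will use a bound that holds for any bounded metric space, namely $\Was{2}^2(\eta,\diffref)\leq \mathrm{diam}(M)^2\,\norm{\eta-\diffref}_{TV}$. Combined with $\norm{\eta-\diffref}_{TV}\leq \frac12\sqrt{\chi^2}$, this only yields $\Was{2}\lesssim e^{-\lambda_1 t/2}$, which is too weak by a square root.

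\textbf{Main obstacle.} The difficulty is to obtain the full rate $e^{-\lambda_1 t}$ in the compact case without curvature. I would first try the weighted $\dot H^{-1}$ comparison
\begin{align*}
    \Was{2}(\eta,\diffref)\leq 2\norm{\eta-\diffref}_{\dot H^{-1}(\diffref)},
\end{align*}
which holds when $d\eta/d\diffref\geq 1/2$ (Peyre's inequality, valid on Riemannian manifolds via the Benamou--Brenier formula). The hypothesis is satisfied for large $t$ because $R_t\rho\to1$ uniformly. This uniform convergence follows from ultracontractivity on a compact manifold: $\norm{R_t f}_\infty\leq C\norm{R_{t-1}f}_{L^2}$, using that the heat kernel $r_1$ is bounded. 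Choosing $T_0$ so that this lower bound holds for $t\geq T_0$ gives the dependence of $T_0$ on $\mu,\nu$.

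Then $\norm{R_t\rho-1}_{\dot H^{-1}}^2=\sum_{i\geq1}\lambda_i^{-1}e^{-2\lambda_i t}p_i^2\leq \lambda_1^{-1}e^{-2\lambda_1 t}\norm{\rho-1}_{L^2}^2$. This gives $\Was{2}(R_t^*\mu,\diffref)\leq C e^{-\lambda_1 t}$ in both settings; for OU the uniform lower bound on the density also holds locally, or Talagrand can be used instead.
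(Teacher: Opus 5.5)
Your argument is correct. In the compact case it rests on the same key lemma as the paper, namely Peyre's $\dot H^{-1}$ comparison combined with the spectral gap, but it is organized differently.

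\textbf{Compact case.} The paper applies Peyre's inequality directly to the pair $(R_t^*\mu,R_t^*\nu)$ with weight $R_t^*\mu$. It then bounds $\int f\,d(R_t^*\mu-R_t^*\nu)=\int (R_tf)(\rho-\sigma)\,d\diffref$ by Cauchy--Schwarz and the Poincar\'e inequality. To convert the $R_t^*\mu$-weighted gradient constraint into a $\diffref$-weighted one, it needs the two-sided bound $c\,\diffref\le R_t^*\mu\le C\,\diffref$, and this is where $T_0$ comes from.

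You instead pass through $\diffref$ by the triangle inequality and weight the norm by $\diffref$. Peyre's Theorem 1, $\Was{2}(\mu_1,\mu_2)\le 2\norm{\mu_1-\mu_2}_{\dot H^{-1}(\mu_1)}$, needs no density lower bound. So with $\mu_1=\diffref$, your hypothesis $d\eta/d\diffref\ge 1/2$ and the ultracontractivity step are unnecessary, and your bound holds for every $t>0$.

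With the paper's normalization $L=\frac12\Delta_g-\frac12\langle\nabla U,\nabla\cdot\rangle$ one has $\int\norm{\nabla f}^2d\diffref=2\sum_i\lambda_i f_i^2$. Hence $\norm{h\,\diffref}_{\dot H^{-1}(\diffref)}^2=\sum_i h_i^2/(2\lambda_i)$, so your formula overestimates by a factor $2$, which is harmless.

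The cost of the triangle inequality is that your constant is $\norm{\rho-1}_{L^2(\diffref)}+\norm{\sigma-1}_{L^2(\diffref)}$ rather than something proportional to $\norm{\rho-\sigma}_{L^2(\diffref)}$. It therefore does not vanish when $\mu=\nu$, which does not matter for this statement.

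\textbf{Euclidean case.} You use Talagrand's inequality together with $H\le\chi^2$ and the spectral decay of $\chi^2$. The paper instead invokes the $\mathrm{CD}(1,\infty)$ Wasserstein contraction. That gives the cleaner constant $\Was{2}(\mu,\nu)$, valid for all $t\ge 0$ and all $\mu,\nu\in\cP_2$ with no density assumption. Your alternative of running the same Peyre argument with the Hermite spectral decomposition gives a single curvature-free proof that covers both settings.
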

\begin{proof}
    In the case of \eqref{eq:OU}, this follows from \cite[Theorem 9.7.2]{bgl-markov} with constant $C = \Was{2}(\mu,\nu)$. In the case of $(M,g)$ compact, we argue directly from the spectral decomposition. By \cite[Theorem 1]{peyre2018comparison}, it holds for all $\mu_1,\mu_2 \in \cP(M)$ that
    \begin{align}\label{eq:w2-neg-sob-comp}
        \Was{2}(\mu_1,\mu_2) \leq 2 \norm{\mu_1-\mu_2}_{\dot{H}^{-1}(\mu_1)},
    \end{align}
    where for any signed measure $\rho$ on $M$ such that $\int_{M} d\rho = 0$,
    \begin{align}\label{defn:neg-sob-norm}
        \norm{\rho}_{\dot{H}^{-1}(\mu_1)} &:= \sup\left\{ \int_M f d\rho : f \in C^1(M), \int_{M} \norm{\nabla f}^2 d\mu_1 = 1\right\}.
    \end{align}
    Now, recall that $R_{t}^* \mu = R_{t}\rho \diffref$. Thus, there exists $T_0$ and $0 < c < C$ such that for all $t \geq T_0$, $c \diffref \leq R_{t}^*\mu \leq C\diffref$. Fix $t \geq T_0$ and let $f \in C^1(M)$ be such that $\int_{M} \norm{\nabla f}^2 dR_{t}^{*}\mu = 1$. Note that this implies that $c\norm{\nabla f}_{L^2(\diffref)}^2  \leq \int_{M} \norm{\nabla f}^2 dR_{t}^{*}\mu = 1\leq C\norm{\nabla f}_{L^2(\diffref)}^2 $. We now see that
    \begin{align*}
        \int_{M} f d(R_t^*\mu - R_t^* \nu) &= \int_{M} f R_{t}(\rho-\sigma) d\diffref = \int_{M} (R_t f) (\rho-\sigma)d\diffref,
    \end{align*}
    where the last equality follows from the symmetry of the semigroup on $L^2(\diffref)$. Since $\int_{M} (\rho -\sigma) d\diffref = 0$, we can without loss of generality here insist that $\int_{M} f d\diffref = 0$. By Cauchy-Schwarz then, it holds that
    \begin{align*}
        \abs{\int_{M} f d(R_t^*\mu - R_t^* \nu)} &\leq \norm{\rho-\sigma}_{L^2(\diffref)}\norm{R_t f}_{L^2(\diffref)} \\
        &\leq \norm{\rho-\sigma}_{L^2(\diffref)} \cdot (2\lambda_1)^{-1/2}\exp(-\lambda_1 t) \norm{\nabla f}_{L^2(\diffref)} \\
        &\leq \norm{\rho-\sigma}_{L^2(\diffref)} \cdot (2\lambda_1 c)^{-1/2}\exp(-\lambda_1 t),
    \end{align*}
    where the second line follows from the Poincaré inequality (see the paragraph after \cite[Proposition 3.1.6]{bgl-markov}). 
    As this bound holds for all $f$ in the definition of \eqref{defn:neg-sob-norm}, the proposition follows by \eqref{eq:w2-neg-sob-comp}.
\end{proof}

We will also make use of the following pointwise inequality. A similar inequality follows immediately under the $\mathrm{CD}(\kappa,\infty)$ condition and is presented in \cite[Proposition 8.6.1]{bgl-markov}. To avoid this assumption in the compact manifold case, we again present an argument from spectral analysis, at the cost of a less sharp leading constant. 
\begin{proposition}\label{prop:bgl-8-6-1}
    Let $f: M \to \mathbb{R}$ be a bounded and measurable function. Then there exists some $T_0 > 0$ and $C > 0$ such that for all $t \geq T_0$,
    \begin{align*}
        \sup\limits_{x \in M}\norm{\nabla R_{t}f(x)} \leq C\exp(-\lambda_1 t)\norm{f}_{L^\infty(\diffref)}
    \end{align*}
\end{proposition}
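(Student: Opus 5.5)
We would argue separately in the two settings allowed by Assumption \ref{assumption}: a direct Mehler-formula computation for \eqref{eq:OU} on $\mathbb{R}^{d}$, and a spectral argument combined with one-time heat-kernel smoothing on compact $M$. The uniformity in $x \in M$ is the only real issue, since on $\mathbb{R}^d$ the kernel gradient is not uniformly bounded in $x$.

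\textbf{Euclidean case.} Here $\lambda_1 = 1/2$. By Mehler's formula, $R_t f(x) = \Exp{}[f(e^{-t/2}x + \sqrt{1-e^{-t}}Z)]$ with $Z \sim N(0,\Id)$. We would differentiate in $x$ after the change of variables $y = e^{-t/2}x+\sqrt{1-e^{-t}}z$, which moves the $x$-dependence into the Gaussian density; this is the Gaussian integration by parts (Bismut-type) formula
\begin{align*}
    \nabla R_t f(x) = \frac{e^{-t/2}}{\sqrt{1-e^{-t}}}\,\Exp{}\left[f\left(e^{-t/2}x+\sqrt{1-e^{-t}}Z\right) Z\right],
\end{align*}
which is valid for bounded measurable $f$. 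Then $\norm{\nabla R_t f(x)} \leq \frac{e^{-t/2}}{\sqrt{1-e^{-t}}}\Exp{}\norm{Z}\,\norm{f}_{L^\infty(\diffref)}$, and since $\Exp{}\norm{Z} \leq \sqrt{d}$, this gives the claim for any $T_0>0$ with $C = \sqrt{d}/\sqrt{1-e^{-T_0}}$.

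\textbf{Compact case.} Fix $s>0$, say $s = 1$, and write $\bar f = \int_M f\, d\diffref$. Since gradients kill constants and $R$ is a semigroup, for $t \geq T_0 := 1+s$ we have
\begin{align*}
    \nabla R_t f = \nabla R_s\left(R_{t-s} f - \bar f\right).
\end{align*}
By the spectral decomposition, $\norm{R_{t-s}f - \bar f}_{L^2(\diffref)} \leq e^{-\lambda_1 (t-s)}\norm{f-\bar f}_{L^2(\diffref)} \leq 2e^{\lambda_1 s}e^{-\lambda_1 t}\norm{f}_{L^\infty(\diffref)}$, using that $\diffref$ is a probability measure.

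It then remains to show that $\nabla R_s$ maps $L^2(\diffref)$ boundedly into uniformly bounded vector fields, i.e.\ that $\sup_x \norm{\nabla R_s g(x)} \leq K_s \norm{g}_{L^2(\diffref)}$. For this we would write $\nabla R_s g(x) = \int_M g(y)\nabla_x r_s(x,y)\,\diffref(dy)$ and apply Cauchy--Schwarz, so that $K_s = \sup_{x}\norm{\nabla_x r_s(x,\cdot)}_{L^2(\diffref)}$. This step is the main obstacle. It needs $r_s$ to be $C^1$ in $x$ jointly continuously on the compact $M\times M$ at fixed $s>0$, which we expect from parabolic regularity, since $L$ has smooth principal part and $C^1$ drift when $U \in C^2$. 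Should that citation be delicate, the fallback is a spectral route. It would show the series $\sum_j e^{-\lambda_j s}\norm{\nabla\psi_j}_\infty^2$ converges, using $\norm{\nabla \psi_j}_{L^2}^2 = 2\lambda_j$, elliptic/Sobolev bounds $\norm{\nabla\psi_j}_\infty \lesssim (1+\lambda_j)^{N}$, and Weyl's law for polynomial growth of $\lambda_j$. With $K_s$ finite, combining the two displays gives the claim with $C = 2K_s e^{\lambda_1 s}$.
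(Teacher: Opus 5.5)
Your proof is correct. In the compact case it follows the paper's argument. The paper also writes $\nabla R_t f = \nabla R_{T_0}(R_{t-T_0} f_0)$ with $f_0 = f - \int_M f\, d\diffref$, uses the $L^2(\diffref)$ spectral decay of $R_{t-T_0}f_0$, and closes with a fixed-time kernel bound. The only difference is cosmetic: the paper uses $\sup_{x,y}\norm{\nabla_x r_{T_0}(x,y)}$ together with $\norm{g}_{L^1(\diffref)} \le \norm{g}_{L^2(\diffref)}$, while you use $\sup_x \norm{\nabla_x r_s(x,\cdot)}_{L^2(\diffref)}$ and Cauchy--Schwarz. The paper takes finiteness of that kernel constant for granted from compactness, so your explicit flagging of the needed $C^1$ regularity of $r_s$, with the spectral fallback, is if anything more careful.

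The genuine difference is the Euclidean case. The paper cites the pointwise reverse Poincaré inequality $\norm{\nabla R_t f}^2 \le \frac{e^{-t}}{1-e^{-t}}\left(R_t f^2 - (R_t f)^2\right)$ from \cite{bgl-markov}, which is a $\Gamma$-calculus consequence of $\mathrm{CD}(1,\infty)$. You instead derive the explicit Bismut-type formula from Mehler's representation. The two are closely linked. Since $\mathrm{E}[Z] = 0$, you may replace $f$ by $f - R_t f(x)$ in your formula. Cauchy--Schwarz against a unit vector $v$, using $\mathrm{E}\langle v, Z\rangle^2 = 1$, then recovers exactly that reverse Poincaré inequality. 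This gives the dimension-free constant $1$ in place of your $\sqrt{d}$, which is harmless for the statement. Your route is self-contained and elementary but tied to the Gaussian structure of \eqref{eq:OU}. The paper's route is a one-line citation that also applies to other $\mathrm{CD}(\kappa,\infty)$ reference processes, though there it only yields the rate $\kappa/2$, which can be smaller than $\lambda_1$.
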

\begin{proof}
    Under OU, by the pointwise reverse Poincaré inequality developed in \cite[Equation (4.7.3)]{bgl-markov} for $f \in L^{\infty}(\diffref)$ and for all $t > 0$
\begin{align}
    \norm{\nabla R_{t}f}^2 \leq \frac{e^{-t}}{1-e^{-t}}(R_{t}f^2-(R_tf)^2).
\end{align}
This then implies the following inequality
\begin{align}
    \norm{\nabla R_{t}f}_{L^{\infty}(\diffref)} \leq \frac{\exp(-t/2)}{\sqrt{1-\exp(-t)}} \norm{f}_{L^{\infty}(\diffref)}. 
\end{align}
Next, consider the compact manifold case. Fix some $T_0 > 0$, then since $M$ is compact it holds that $C_{T_0} := \sup\limits_{x,y \in M} \norm{\nabla_{x} r_{T_0}(x,y)} < +\infty$. Set $f_0 := f - \int_{M} f d\diffref$, then for $t \geq T_0$
\begin{align*}
    \norm{\nabla R_{t}f_0(x)} &= \norm{\nabla R_{T_0}(R_{t-T_0}f_{0})(x)} \leq C_{T_0}\norm{R_{t-T_0}f_0}_{L^2(\diffref)} \leq C_{T_0} \exp(\lambda_1 (T_0-t))\norm{f_0}_{L^2(\diffref)}.
\end{align*}
Since $\nabla R_{t}f_{0} = \nabla R_{t}f$ and $\norm{f_0}_{L^{2}(\diffref)} \leq \norm{f}_{L^2(\diffref)}$, the proposition then follows from setting $C = C_{T_0} \exp(\lambda_{1} T_0)$ and the fact that $\norm{f}_{L^2(\diffref)} \leq \norm{f}_{L^{\infty}(\diffref)}$. 
\end{proof}

On the basis of the spectral expansion, we establish the following limit.
\begin{proposition}\label{prop:chi-sqrd-bdd}
    Under Assumption \ref{assumption}, it holds that 
    \begin{align}
        \limsup\limits_{T \to +\infty} \exp(\lambda_1 T)\chi^2(\mu_{T/2}^{T}||\diffref) &\leq 2(K^2+1)\sum_{i\in \mathcal{I}_1} (p_i^2 + q_i^2),
    \end{align}
    where $K = \sup\limits_{T \geq T^*} \norm{b^{T}}_{L^{\infty}(\diffref)}$ for some $T^* > 0$ chosen large enough, guaranteed to exist by \eqref{eq:uniform-bdd-potents}.
\end{proposition}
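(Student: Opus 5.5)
We plan to write the density of the midpoint marginal as $d\mu_{T/2}^{T}/d\diffref = (R_{T/2}a^{T})(R_{T/2}b^{T})$ and compare it to $1$ through the spectral expansions \eqref{eq:spectral-expn-potents}. Set $A := R_{T/2}a^{T}$ and $B := R_{T/2}b^{T}$. From the Schr\"odinger system, $\int_M a^{T}R_T b^{T}\,d\diffref = 1$. Hence $\int_M AB\,d\diffref = 1$, and therefore $\chi^2(\mu_{T/2}^{T}||\diffref) = \norm{AB-1}_{L^2(\diffref)}^2$.

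We split
\begin{align*}
AB - 1 = A(B-1) + (A-1),
\end{align*}
so that
\begin{align*}
\chi^2(\mu_{T/2}^{T}||\diffref) \leq 2\norm{A}_{L^\infty(\diffref)}^2\norm{B-1}_{L^2(\diffref)}^2 + 2\norm{A-1}_{L^2(\diffref)}^2 .
\end{align*}
The choice of which factor to bound in $L^\infty$ should match the constant $K$ in the statement. Swapping the roles, $AB-1 = B(A-1)+(B-1)$, gives
\begin{align*}
\chi^2(\mu_{T/2}^{T}||\diffref) \leq 2K^2\norm{A-1}_{L^2(\diffref)}^2 + 2\norm{B-1}_{L^2(\diffref)}^2 ,
\end{align*}
using $\norm{R_{T/2}b^{T}}_{L^\infty(\diffref)} \leq \norm{b^{T}}_{L^\infty(\diffref)} \leq K$, since $R_t$ is a Markov contraction on $L^\infty$. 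Bounding both coefficients by $2(K^2+1)$ then gives the form of the claimed constant.

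The remaining work is spectral. We have $B - 1 = \sum_{i\geq1} e^{-\lambda_i T/2}\beta_i(T)\psi_i$, so
\begin{align*}
e^{\lambda_1 T}\norm{B-1}^2_{L^2(\diffref)} = \sum_{i\in\mathcal{I}_1}\beta_i(T)^2 + \sum_{i\notin \mathcal{I}_1, i\geq 1} e^{-(\lambda_i-\lambda_1)T}\beta_i(T)^2 .
\end{align*}
The first sum tends to $\sum_{i\in\mathcal{I}_1} q_i^2$ because $\beta_i(T)\to q_i$, and $\mathcal{I}_1$ is finite. For the second sum, let $\lambda_* > \lambda_1$ be the next distinct eigenvalue. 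The sum is then bounded by $e^{-(\lambda_*-\lambda_1)T}\sup_{T}\norm{b^T}_{L^2(\diffref)}^2 \to 0$, using the uniform $L^\infty$ (hence $L^2$) bound from \eqref{eq:uniform-bdd-potents}.

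For $A$, we have $A - 1 = \left(\int_M a^{T}\,d\diffref - 1\right) + \sum_{i\geq1}e^{-\lambda_iT/2}\alpha_i(T)\psi_i$. The oscillating part is handled exactly as for $B$ and yields $\sum_{i\in\mathcal{I}_1}p_i^2$ in the limit.

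The main obstacle is the constant mode $\int_M a^{T}\,d\diffref - 1$, which is not damped by the semigroup. We have to show it is $o(e^{-\lambda_1T/2})$ (in fact $O(e^{-\lambda_1 T})$). For this we use the normalization $\int_M AB\,d\diffref = 1$ with $\int_M b^{T}\,d\diffref = 1$. Expanding in the eigenbasis,
\begin{align*}
1 = \int_M a^{T}\,d\diffref + \sum_{i\geq1} e^{-\lambda_i T}\alpha_i(T)\beta_i(T),
\end{align*}
so by Cauchy--Schwarz and the uniform $L^2$ bounds on $a^{T}$ and $b^{T}$,
\begin{align*}
\abs{\int_M a^{T}\,d\diffref - 1} \leq e^{-\lambda_1T}\norm{a^T}_{L^2(\diffref)}\norm{b^T}_{L^2(\diffref)} = O(e^{-\lambda_1 T}).
\end{align*}
Multiplied by $e^{\lambda_1T}$ after squaring, this term vanishes in the limit.

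Combining these pieces, the $\limsup$ of $e^{\lambda_1T}\chi^2(\mu_{T/2}^{T}||\diffref)$ is at most $2K^2\sum_{i\in\mathcal{I}_1}p_i^2 + 2\sum_{i\in\mathcal{I}_1}q_i^2 \leq 2(K^2+1)\sum_{i\in\mathcal{I}_1}(p_i^2+q_i^2)$. In the Euclidean OU case, the same argument runs with the Hermite basis, where $\lambda_1 = 1/2$.
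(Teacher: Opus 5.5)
Your proposal is correct and takes essentially the same route as the paper. It uses the same splitting $R_{T/2}a^{T}R_{T/2}b^{T}-1=(R_{T/2}a^{T}-1)R_{T/2}b^{T}+(R_{T/2}b^{T}-1)$ with $\norm{R_{T/2}b^{T}}_{L^\infty(\diffref)}\le K$, the same spectral expansions, and the same identity $1=\int_M a^{T}\,d\diffref+\sum_{i\ge1}e^{-\lambda_iT}\alpha_i(T)\beta_i(T)$ to control the constant mode. The only difference is that you bound $\abs{\int_M a^{T}\,d\diffref-1}=O(e^{-\lambda_1T})$ by Cauchy--Schwarz rather than computing its exact rescaled limit $\sum_{i\in\mathcal{I}_1}p_iq_i$; this suffices here and gives the slightly sharper constant $2K^2\sum_{i\in\mathcal{I}_1}p_i^2+2\sum_{i\in\mathcal{I}_1}q_i^2$.
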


\begin{proof}
    Note that $\frac{d\mu_{T/2}^{T}}{d\diffref} = R_{T/2}a^{T} R_{T/2}b^{T} \in L^2(\diffref)$. As $\int_{M} R_{T/2}a^{T}R_{T/2}b^{T} d\diffref = 1$, from the spectral expansions of $a^{T}$ and $b^T$ we obtain that 
    \begin{align}
        1 &= \langle R_{T/2}a^{T} R_{T/2}b^{T}, 1 \rangle_{L^2(\diffref)} = \left(\int_{M} a^{T}d\diffref\right) + \sum_{i=1}^{\infty} \exp(-\lambda_{i} T)\alpha_{i}(T)\beta_{i}(T).  
    \end{align}
    It then follows that
    \begin{align}\label{eq:limit-l1-norm-aT}
        \lim\limits_{T \to +\infty} \exp(\lambda_1 T)\left(1-\int_{M} a^{T} d\diffref\right) &= \sum_{i \in \mathcal{I}_{1}}p_i q_{i}.
    \end{align}
    Hence, we can deduce
    \begin{align*}
        \lim\limits_{T \to +\infty}\exp(\lambda_{1}T)\norm{R_{T/2}a^{T}-1}_{L^2(\diffref)}^2 &= \lim\limits_{T \to +\infty}\exp(\lambda_{1}T)\left[\left(\int_{M} a^{T} d\diffref-1\right)^2 + \sum_{i=1}^{\infty} \exp(-\lambda_{i}T)\alpha_{i}^2(T)\right] \\
        &= \sum_{i \in \mathcal{I}_1} p_i^2. 
    \end{align*}
    More straightforwardly, $\lim\limits_{T \to +\infty}\exp(\lambda_1 T)\norm{R_{T/2}b^{T}-1}_{L^2(\diffref)}^2 = \sum_{i \in \mathcal{I}_1} q_i^2$. To combine these facts, we write
    \begin{align*}
        R_{T/2}a^{T}R_{T/2}b^{T}-1 &= \left((R_{T/2}a^{T}-1)R_{T/2}b^{T}\right) + \left(R_{T/2}b^{T}-1\right).
    \end{align*}
    Applying the triangle inequality establishes
    \begin{align*}
        \limsup\limits_{T \to +\infty}\exp(\lambda_1 T)\norm{R_{T/2}a^{T}R_{T/2}b^{T}-1}_{L^2(\diffref)}^2 \leq 2(K^2+1)\sum_{i\in \mathcal{I}_1} (p_i^2 + q_i^2). 
    \end{align*}
    Finally then, recall that $\chi^{2}(\mu_{T/2}^{T}||\diffref) = \norm{d\mu_{T/2}^{T}/d\diffref-1}^{2}_{L^2(\diffref)}$, establishing the claim.
\end{proof}

We now prove Theorem \ref{thm:next-term-entropic-cost}, computing the next order term in the entropic cost. 
\begin{proof}[Proof of Theorem \ref{thm:next-term-entropic-cost}]
    From the product structure of the $\Schro$ bridge, this requires us to compute the limit as $T \to +\infty$ of the exponential rescaling of 
    \begin{align*}
        \int_{M} \left(\log a^{T} - \log \rho\right)  d\mu + \int_{M} \left(\log b^{T}-\log \sigma \right) d\nu  =\left(-\int_{M}  \log R_{T}b^{T} d\mu - \int_{M} \log R_{T}a^{T} d\nu\right)
    \end{align*}
    where the second identity follows from the fact that $\rho = a^{T}R_{T}b^{T}$ and $\sigma = R_{T}a^{T} b^{T}$. 
    
    We now compute the two integrals separately. Recall the numerical inequality $\frac{x}{x+1} \leq \log (1+x) \leq x$ for all $x > -1$. It then holds that
    \begin{align}
        \int_{M} \frac{(R_{T}b^{T}-1)}{R_{T}b^{T}} d\mu \leq \int_{M} \log R_{T}b^{T} d\mu \leq \int_{M} \left(R_{T}b^{T}-1\right)d\mu.
    \end{align}
    First, work with the right hand side. Recall the spectral expansion of $b^{T}$ in $L^{2}(\diffref)$. With some rearrangement, the following representation holds in $L^2(\diffref)$
    \begin{align*}
        \exp(\lambda_1 T)(R_{T}b^{T}-1) - \left(\sum_{i \in \mathcal{I}_{1}} \beta_{i}(T)\psi_{i}\right) = \sum_{i \notin \left(\mathcal{I}_{1} \cup \{0\}\right)}^{+\infty} \exp(-(\lambda_i -\lambda_1)T)\beta_{i}(T)\psi_{i}. 
    \end{align*}
    The right hand side vanishes as $T \to +\infty$ in $L^2(\diffref)$. As $\rho \in L^{\infty}(\diffref)$, the right hand side also vanishes in $L^2(\rho)$. This establishes that
    \begin{align}
        \lim\limits_{T \to +\infty }\exp(\lambda_1 T)\int_{M}(R_{T}b^{T}-1)d\mu &=  \sum_{i \in \mathcal{I}_{1}} q_{i}\int_{M}\psi_{i}d\mu = \sum_{i \in \mathcal{I}_{1}} p_{i}q_{i}
    \end{align}
    For the left hand side, as $\rho = a^{T}R_{T}b^{T}$ we obtain
    \begin{align*}
        \int_{M} \frac{(R_{T}b^{T}-1)}{R_{T}b^{T}} d\mu &= \int_{M} (R_{T}b^{T}-1)a^{T} d\diffref = \int_{M} (R_{T}b^{T}-1)d\mu + \int_{M} (R_{T}b^{T}-1)(a^{T}-\rho)d\diffref. 
    \end{align*}
    We have already computed the rescaled limit of the first term on the rightmost expression. For the second term, as $\exp(\lambda_1 T)\norm{R_{T}b^{T}-1}_{L^{2}(\diffref)}$ stays bounded as $T \to +\infty$ and $\norm{a^{T}-\rho}_{L^2(\diffref)}$ vanishes as $T \to +\infty$, the term vanishes by Cauchy-Schwarz. Hence, it holds that
    \begin{align*}
        \lim\limits_{T \to +\infty}\exp(\lambda_{1}T)\int_{M} \log R_{T}b^{T} d\mu &=  \sum_{i \in \mathcal{I}_1} p_{i}q_{i}.
    \end{align*}
    The argument for the second integral is similar, with the only difference arising from the fact that $a^{T}$ has not been normalized to integrate to $1$. Regardless, the analogous chain of reasoning from the numerical inequality for $\log$ gives that
    \begin{align*}
        \int_{M} (R_{T}a^{T}-1)(b_{T}-\sigma) d\diffref \leq \int_{M} \log R_{T}a^{T}d\nu - \int_{M} \left(R_{T}a^{T}-1\right) d\nu \leq 0.
    \end{align*}
    With the spectral expansion of $a^{T}$ and the limit of $\int_{M} a^{T} d\diffref$ computed in \eqref{eq:limit-l1-norm-aT} (note the sign change), identical argumentation gives that
    \begin{align*}
        \lim\limits_{T \to +\infty}\exp(\lambda_{1}T)\int_{M} \log R_{T}a^{T} d\nu &= \lim\limits_{T \to +\infty}\exp(\lambda_{1}T) \left(\left(\int_{M}R_{T}a^{T}d\nu-\int_{M} a^{T}d\diffref\right) + \left(\int_{M}a^{T}d\diffref -1\right)\right) \\
        &= \sum_{i \in \mathcal{I}_{1}} \left(p_{i}q_{i}-p_{i}q_i\right) =0.
    \end{align*}
\end{proof}

The following proposition is key for the symmetric relative entropy calculation, and can be developed simultaneously in both the compact and non-compact cases we consider.
\begin{proposition}\label{prop:first-half-symm-rel-ent}
    In addition to Assumption \ref{assumption}, suppose that 
    \begin{align}\label{eq:distance-asmp}
        \limsup\limits_{T \to +\infty} \Exp{\diffref}\left[d^2(x,\Phi_{T/2}(x))\right] < +\infty,
    \end{align}
    where $\Phi_{T/2}: M \to M$ is the quadratic cost optimal transport map from $R_{T/2}^*\mu$ to $R_{T/2}^*\nu$. Then
    \begin{align*}
        \limsup\limits_{T \to +\infty}\exp(\lambda_1 T)\left(\left(\Exp{P_{T}}-\Exp{Q_{T}}\right)\left[\log\left(\frac{d(R_{T/2}^*\nu)}{d\diffref}(\Phi_{T/2}(x_{T/2}))\right)\right]\right) < +\infty.
    \end{align*}
    
\end{proposition}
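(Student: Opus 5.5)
The plan is to express the quantity as a single integral against the difference of two time-$T/2$ marginals, split the test function into a \emph{main} part and an \emph{OT correction} part, and show that each part and the density difference are both of order $\exp(-\lambda_1 T/2)$. Their product then gives the required $\exp(-\lambda_1 T)$.

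\textbf{Reduction.} Set $g_T:=\log R_{T/2}\sigma$, which is the log-density of $R_{T/2}^*\nu$ with respect to $\diffref$. Under $P^{T}$ the law of $x_{T/2}$ is $\mu^T_{T/2}$, with density $h_T:=R_{T/2}a^TR_{T/2}b^T$. Under $Q^T$ it is $R^*_{T/2}\mu$, with density $R_{T/2}\rho$, because $Q^T$ is the noising process on $[0,T/2]$. Hence the quantity in the statement equals
\begin{align*}
\int_M g_T(\Phi_{T/2}(x))\,\bigl(h_T-R_{T/2}\rho\bigr)(x)\,\diffref(dx).
\end{align*}
\textbf{Density difference.} By the computation in the proof of Proposition \ref{prop:chi-sqrd-bdd}, $\norm{h_T-1}_{L^2(\diffref)}=O(e^{-\lambda_1T/2})$. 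The spectral expansion of $\rho$ gives $\norm{R_{T/2}\rho-1}_{L^2(\diffref)}\le e^{-\lambda_1T/2}\norm{\rho-1}_{L^2(\diffref)}$. So $D_T:=h_T-R_{T/2}\rho$ satisfies $\norm{D_T}_{L^2(\diffref)}=O(e^{-\lambda_1 T/2})$. Moreover $D_T$ is bounded in $L^\infty(\diffref)$ uniformly in $T$, by \eqref{eq:uniform-bdd-potents} together with $\norm{R_{T/2}\rho}_\infty\le\norm{\rho}_\infty$.

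\textbf{Splitting the test function.} Write $g_T\circ\Phi_{T/2}=g_T+(g_T\circ\Phi_{T/2}-g_T)$.
\begin{itemize}
\item For the first term I need $|g_T|$ to be of order $e^{-\lambda_1T/2}$ in a weighted $L^2$ sense.
\item For the second term I will use the mean value bound $|g_T(\Phi(x))-g_T(x)|\le \sup_{\text{segment}}\norm{\nabla g_T}\,d(x,\Phi(x))$, which requires $\nabla g_T=O(e^{-\lambda_1T/2})$.
\end{itemize}
Each piece is then paired with $D_T$ by Cauchy--Schwarz in $L^2(\diffref)$. In both cases the $e^{-\lambda_1 T/2}$ from the test function multiplies the $e^{-\lambda_1 T/2}$ from $\norm{D_T}_{L^2}$, which gives the claimed rate.

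\textbf{Compact case.} Here $R_{T/2}\sigma\to1$ uniformly; Proposition \ref{prop:bgl-8-6-1} yields this, and compactness gives the uniform lower bound. So for large $T$ we have $R_{T/2}\sigma\ge 1/2$.
\begin{itemize}
\item The inequality $\frac{x}{1+x}\le\log(1+x)\le x$ gives $|g_T|\le 2|R_{T/2}\sigma-1|$, whose $L^2(\diffref)$ norm is $O(e^{-\lambda_1T/2})$ by the spectral expansion.
\item Proposition \ref{prop:bgl-8-6-1} gives $\norm{\nabla g_T}_\infty\le 2\norm{\nabla R_{T/2}\sigma}_\infty=O(e^{-\lambda_1 T/2})$.
\end{itemize}
The correction term is then bounded by $Ce^{-\lambda_1T/2}\,\Exp{\diffref}[d^2(x,\Phi_{T/2}(x))]^{1/2}\norm{D_T}_{L^2(\diffref)}$. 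This is $O(e^{-\lambda_1 T})$ by hypothesis \eqref{eq:distance-asmp}.

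\textbf{Euclidean (OU) case.} This is the main obstacle, because $R_{T/2}\sigma$ is not bounded below on $\mathbb{R}^d$. Instead I would use the explicit kernel \eqref{eq:ou-trans-dens}, with $\mathrm{supp}\,\sigma\subset B_R$ and $t=T/2$. A direct differentiation gives
\begin{align*}
\nabla g_T(x)=-\frac{e^{-t}x}{1-e^{-t}}+\frac{e^{-t/2}}{1-e^{-t}}\,\mathbb{E}[Y],
\end{align*}
where $Y$ is a random variable supported in $B_R$. Hence $|\nabla g_T(x)|\le Ce^{-t/2}(1+|x|)$. Since $g_T(0)$ is bounded by the same kind of estimate, integrating this bound gives $|g_T(x)|\le Ce^{-t/2}(1+|x|^2)$. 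Here $e^{-t/2}=e^{-T/4}=e^{-\lambda_1T/2}$.

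For the correction term I would combine this gradient bound with Proposition \ref{prop:prop-of-log-conc}, namely $|\nabla\varphi_{T/2}(x)|\le C(1+|x|)$, to get
\begin{align*}
|g_T(\Phi(x))-g_T(x)|\le Ce^{-\lambda_1T/2}(1+|x|)^2 .
\end{align*}
Pairing this with $D_T$ needs a weighted bound, so I would use the $L^\infty$ bound on $D_T$ as follows. Gaussian moments of all orders are finite, so
\begin{align*}
\int(1+|x|)^2|D_T|\,d\diffref\le \norm{(1+|x|)^2|D_T|^{1/2}}_{L^2(\diffref)}\,\norm{|D_T|^{1/2}}_{L^2(\diffref)}.
\end{align*}
To keep the full $e^{-\lambda_1T/2}$ from $D_T$, I would instead apply Cauchy--Schwarz as $\norm{(1+|x|)^2}_{L^2(\diffref)}\norm{D_T}_{L^2(\diffref)}$, which is $O(e^{-\lambda_1T/2})$ directly. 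The same pairing handles the main term $g_T$.

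Combining the two pieces yields $O(e^{-\lambda_1 T})$ in both settings, which is the claimed bound.
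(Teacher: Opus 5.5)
Your argument is correct, and it takes a genuinely different route from the paper's.

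\textbf{What the paper does.} The paper never estimates $\log R_{T/2}\sigma$ itself. It uses $\Exp{Q^T}[\log R_{T/2}\sigma(\Phi_{T/2}(x_{T/2}))]=H(R_{T/2}^*\nu|\diffref)\geq 0$ together with $\log y\le y-1$. This reduces the problem to the one-sided quantity $\Exp{\diffref}[h_T\,(R_{T/2}\sigma\circ\Phi_{T/2}-1)]$, which it splits as $\Exp{\diffref}[(h_T-1)(R_{T/2}\sigma-1)]+\Exp{\diffref}[h_T\,(R_{T/2}\sigma\circ\Phi_{T/2}-R_{T/2}\sigma)]$. The second piece is paired with $h_T$ rather than with a density difference. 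The paper therefore has to show $\Exp{\diffref}[d(x,\Phi_{T/2}(x))]=O(e^{-\lambda_1T/2})$, and this is where the Wasserstein decay of Proposition~\ref{prop:bgl-9-7-2} enters.

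\textbf{What you do differently.} You keep the exact difference $D_T=h_T-R_{T/2}\rho$ of the two time-$T/2$ marginals. Every term then carries the factor $\norm{D_T}_{L^2(\diffref)}=O(e^{-\lambda_1T/2})$, so the test function only has to be $O(e^{-\lambda_1T/2})$. This buys you two things:
\begin{itemize}
\item you bound the absolute value, which is stronger than the one-sided $\limsup$ that is stated and that the paper proves;
\item Proposition~\ref{prop:bgl-9-7-2} is never needed, since on compact $M$ mere boundedness of $\Exp{\diffref}[d^2(x,\Phi_{T/2}(x))]$ suffices.
\end{itemize}
The price is that you must control $\log R_{T/2}\sigma$ and its gradient directly. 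That forces a case split: a uniform lower bound on $R_{T/2}\sigma$ for compact $M$, and the explicit OU kernel on $\mathbb{R}^d$. The paper's linearization needs no lower bound on $R_{T/2}\sigma$ and handles both settings with one argument.

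\textbf{Two remarks on the Euclidean step.}
\begin{itemize}
\item As written, it uses the pointwise linear growth from Proposition~\ref{prop:prop-of-log-conc} rather than the hypothesis \eqref{eq:distance-asmp}. That is legitimate under Assumption~\ref{assumption}. In fact the hypothesis alone suffices: write $\norm{\nabla g_T(z)}\le C(e^{-T/2}\norm{z}+e^{-T/4})$ with $\norm{z}\le\norm{x}+\norm{\Phi_{T/2}(x)-x}$ along the segment. Absorb the $e^{-T/2}$ part using the uniform $L^\infty$ bound on $D_T$, and pair the $e^{-T/4}$ part with $\norm{D_T}_{L^2(\diffref)}$.
\item Delete the intermediate estimate involving $\abs{D_T}^{1/2}$. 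It would retain only $e^{-\lambda_1T/4}$ from $D_T$. The plain Cauchy--Schwarz bound $\norm{(1+\abs{x})^2}_{L^2(\diffref)}\norm{D_T}_{L^2(\diffref)}$ that you end with is the right step.
\end{itemize}
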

\begin{remark}
    Note that \eqref{eq:distance-asmp} holds in the settings of Theorems \ref{thm:euclidean-case} and \ref{thm:compact-manifold}. When $(M,g)$ is a compact manifold, this claim is immediate as $M$ is bounded. In the Euclidean setting, recall that Proposition \ref{prop:prop-of-log-conc} gives the existence of some $C> 0$ such that for all $T$ large enough $\norm{\nabla \varphi_{T/2}(x)} \leq C(1+\norm{x})$ for all $x \in \mathbb{R}^d$. This completes the argument.
\end{remark}

\begin{proof}
    Under $Q_{T}$, the variable $\Phi_{T/2}(x_{T/2}) \sim R_{T/2}^*\nu$. As $R_{T/2}^*\nu, \diffref \in \cP(M)$, it holds that
\begin{align*}
    \Exp{Q_{T}}\left[\log\left(\frac{d(R_{T/2}^*\nu)}{d\diffref}(\Phi_{T/2}(x_{T/2}))\right)\right] = H(R_{T/2}^*\nu|\diffref) \geq 0. 
\end{align*}
Combining this with the trivial bound $\log(1+x) \leq x$ for all $x > -1$,
\begin{align*}
     &\left(\Exp{P_{T}}-\Exp{Q_{T}}\right)\left[\log\left(\frac{d(R_{T/2}^*\nu)}{d\diffref}(\Phi_{T/2}(x_{T/2}))\right)\right] \leq \Exp{\diffref}\left[\frac{d\mu_{T/2}^{T}}{d\diffref}(x)\left(R_{T/2}\sigma(\Phi_{T/2}(x))-1 \right)\right]\\
     &=\Exp{\diffref}\left[\left(\frac{d\mu_{T/2}^{T}}{d\diffref}(x)-1\right)\left(R_{T/2}\sigma(x)-1 \right)\right]+\Exp{\diffref}\left[\frac{d\mu_{T/2}^{T}}{d\diffref}(x)\left(R_{T/2}\sigma(\Phi_{T/2}(x))-R_{T/2}\sigma(x) \right)\right],
\end{align*}
where the equality follows from noting that $\Exp{\diffref}[R_{T/2}\sigma] = 1$. We now show each term is bounded in absolute value when we multiply by $\exp(\lambda_{1}T)$ and send $T \to +\infty$. First, by Cauchy-Schwarz
\begin{align*}
    &\exp(\lambda_{1}T)\abs{\Exp{\diffref}\left[\left(\frac{d\mu_{T/2}^{T}}{d\diffref}(x)-1\right)\left(R_{T/2}\sigma(x)-1 \right)\right]} \\
    &\leq \exp(\lambda_{1}T/2)\sqrt{\chi^2(\mu_{T/2}^{T}||\diffref)} \cdot \exp(\lambda_{1}T/2)\norm{R_{T/2}\sigma(x)-1}_{L^2(\diffref)},
\end{align*}
which is bounded as $T \to +\infty$ by Proposition \ref{prop:chi-sqrd-bdd} and the standard spectral expansion for $\sigma$. 

For the second term, recall from \eqref{eq:uniform-bdd-potents} that there exists $C > 0$ such that $d\mu_{T/2}^{T}/d\diffref \leq C$ for all $T$ large enough. Hence, for all $T$ large enough,
\begin{align*}
    \abs{\Exp{\diffref}\left[\frac{d\mu_{T/2}^{T}}{d\diffref}(x)\left(R_{T/2}\sigma(\nabla \varphi_{T/2}(x))-R_{T/2}\sigma(x) \right)\right]} \leq C\norm{\nabla R_{T/2}\sigma}_{L^{\infty}(\diffref)} \Exp{\diffref}\left[d(x,\nabla \varphi_{T/2}(x))\right].
\end{align*}
Since $\norm{\sigma}_{L^{\infty}(\diffref)} < +\infty$, by Proposition \ref{prop:bgl-8-6-1} it holds that $\norm{\nabla R_{T/2}\sigma}_{\infty}$ decays like $\exp(-\lambda_1 T/2)$ as $T \to +\infty$. For the other term, we write
\begin{align*}
    \Exp{\diffref}\left[d(x,\nabla \varphi_{T/2}(x))\right] &= \Exp{R_{T/2}^*\mu}\left[d(x,\nabla \varphi_{T/2}(x))\right]+\Exp{\diffref}\left[(1-R_{T/2}\rho)d(x,\nabla \varphi_{T/2}(x))\right]
\end{align*}
By Jensen's inequality, the $\Was{2}$ optimality of $\nabla \varphi_{T/2}$, and Proposition \ref{prop:bgl-9-7-2}, the first term of the RHS is bounded above by $C\exp(-\lambda_1 T/2)$.
On the other hand, by Cauchy-Schwarz 
\begin{align*}
    \abs{\Exp{\diffref}\left[(R_{T/2}\rho-1)d(x,\nabla \varphi_{T/2}(x))\right]} \leq \norm{R_{T/2}\rho-1}_{L^2(\diffref)}\left(\Exp{\diffref}[d^2(x,\nabla \varphi_{T/2}(x))]\right)^{1/2}.
\end{align*}
This term also decays like $\exp(-\lambda_1 T/2)$ as $T \to +\infty$ thanks to \eqref{eq:distance-asmp} and the spectral decay of $R_{T/2}\rho$. 
\end{proof}

\subsection{Compact Manifold Proofs}\label{subsec:compact-manifold-proofs}
First, as promised in Section \ref{sec:preliminaries} we present an elementary proof of Proposition \ref{prop:schro-limit-facts} in the compact manifold case that does not require a curvature dimension assumption.
\begin{proof}[Proof of Proposition \ref{prop:schro-limit-facts} in compact manifold setting]
As $M$ is compact, by \cite[Exercise (10.11)]{heat-kernel-manifold-grig09}, as $T \to +\infty$, $r_{t}(x,y) \to 1$ uniformly in $x,y \in M$. 
    That is, for each $T_0 >0$ there exist $0 < c_1 < 1 < C_1 < 2$ such that 
    \begin{align}\label{eq:uniform-bdd}
        c_1 \leq r_t(x,y) \leq C_1, \quad \text{for all $t \geq T_0$, $x,y \in M$.}
    \end{align}
    Since the $\Schro$ bridge is a coupling of $\mu$ and $\nu$, for all $T \geq T_0$ 
    \begin{align*}
        \rho(x) &= \int_{M} r_{T}(x,y)a^{T}(x)b^{T}(y) \diffref(dy) \geq c_1 a^{T}(x)\int_{M} b^{T}(y) \diffref(dy) \Rightarrow \norm{a^T}_{L^{\infty}(\diffref)} \leq \frac{1}{c_1} \norm{\rho}_{L^{\infty}(\diffref)}, \\
        \sigma(y) &= \int_{M} r_{T}(x,y)a^{T}(x)b^{T}(y) \diffref(dx) \geq c_1 b^{T}(y)\int_{M} a^{T}(x) \diffref(dx) \Rightarrow \norm{a^T}_{L^{1}(\diffref)}\norm{b^{T}}_{L^{\infty}(\diffref)} \leq \frac{1}{c_1} \norm{\sigma}_{L^{\infty}(\diffref)}.
    \end{align*}
    Next, observe for all $T \geq T_0$
    \begin{align*}
        \rho(x) \leq C_{1}a^{T}(x) \int_{M} b^{T}(y)\diffref(dy) \Rightarrow \int_{M} a^{T}(x) \diffref(dx) \geq C_{1}^{-1}. 
    \end{align*}
    Hence, we obtain that $\norm{a^{T}}_{L^{\infty}(\diffref)}, \norm{b^{T}}_{L^{\infty}(\diffref)}$ stay bounded as $T \to +\infty$.
    
    For the $L^2(\diffref)$ convergence, we note that
    \begin{align*}
        \norm{a^{T}-\rho}_{L^2(\diffref)} = \norm{a^{T}-a^{T}R_{T}b^{T}}_{L^2(\diffref)} \leq \norm{a^{T}}_{L^{\infty}(\diffref) }\norm{R_{T}b^{T}-1}_{L^{2}(\diffref)} \leq \norm{a^{T}}_{L^{\infty}(\diffref)} \cdot \exp(-\lambda_1 T)\norm{b^{T}}_{L^{2}(\diffref)},
    \end{align*}
    which vanishes as $T \to +\infty$ as we have now established that $\norm{a^{T}}_{L^{\infty}(\diffref)}, \norm{b^{T}}_{L^{\infty}(\diffref)}$ stay bounded as $T \to +\infty$. Similarly,
    \begin{align*}
        \norm{b^{T}-\sigma}_{L^2(\diffref)} = \norm{b^{T}-R_{T}a^{T}b^{T}}_{L^2(\diffref)} \leq \norm{b^{T}}_{L^{\infty}(\diffref)}\norm{R_{T}a^{T}-1}_{L^2(\diffref)}.
    \end{align*}
    We then split $R_{T}a^{T}-1 = R_{T}(a^{T}-\rho)+(R_{T}\rho - 1)$, which vanishes in $L^2(\diffref)$ by the convergence of $a^{T} \to \rho$ in $L^2(\diffref)$. Note that both $L^2(\diffref)$ convergences have rate $\exp(-\lambda_1 T)$ as $T \to +\infty$.
\end{proof}

We now complete the proof of Theorems \ref{thm:compact-manifold} and \ref{thm:compact-manifold-same}.
\begin{proposition}\label{prop:heat-kernel-log-bdd}
    Let $M$ be a compact manifold. There exists $C > 0$ and $T > 0$ such that for all $t \geq T$ and $x \in M$, the following holds for all $y,z \in M$
    \begin{align*}
        \abs{\log \left(\frac{r_t(x,y)}{r_t(x,z)}\right)} \leq C \exp(-\lambda_1 t) d(y,z). 
    \end{align*}
\end{proposition}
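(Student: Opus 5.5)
Fix $T_0 > 0$ and work with $t \geq 2T_0$ (the statement's $T$ is then $2T_0$, possibly enlarged). The idea is to bound the gradient of $r_t(x,\cdot)$ by a multiple of $\exp(-\lambda_1 t)$, uniformly in $x$. We then combine this with the uniform lower bound $r_t \geq c_1 > 0$ from \eqref{eq:uniform-bdd} and a mean value argument along geodesics.

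First we use the Chapman--Kolmogorov identity
\begin{align*}
    r_t(x,y) = \int_M r_{t-T_0}(x,w)\, r_{T_0}(w,y)\,\diffref(dw) = R_{T_0}\big(r_{t-T_0}(x,\cdot)\big)(y),
\end{align*}
where we used the symmetry of $r_{T_0}$. Write $f_x := r_{t-T_0}(x,\cdot) - 1$. Since $R_{T_0}1 = 1$, we have $\nabla_y r_t(x,y) = \nabla_y R_{T_0} f_x(y)$.

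As in the proof of Proposition \ref{prop:bgl-8-6-1}, set $C_{T_0} := \sup_{w,y}\norm{\nabla_y r_{T_0}(w,y)} < +\infty$, which is finite by compactness and smoothness of the heat kernel. This gives
\begin{align*}
    \norm{\nabla_y r_t(x,y)} \leq C_{T_0}\norm{f_x}_{L^1(\diffref)} \leq C_{T_0}\norm{f_x}_{L^2(\diffref)}.
\end{align*}
To bound $\norm{f_x}_{L^2(\diffref)}$, apply the semigroup again:
\begin{align*}
    f_x = R_{t-2T_0}\big(r_{T_0}(x,\cdot)-1\big).
\end{align*}
The function $r_{T_0}(x,\cdot)-1$ has mean zero, so the spectral gap gives
\begin{align*}
    \norm{f_x}_{L^2(\diffref)} \leq \exp(-\lambda_1(t-2T_0))\, \sup_{x}\norm{r_{T_0}(x,\cdot)-1}_{L^2(\diffref)}.
\end{align*}
The supremum on the right is finite because $r_{T_0}$ is bounded on $M\times M$. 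Hence
\begin{align*}
    \sup_{x,y}\norm{\nabla_y r_t(x,y)} \leq C' \exp(-\lambda_1 t), \qquad t \geq 2T_0.
\end{align*}

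Next, for fixed $x$ the map $y \mapsto \log r_t(x,y)$ is well defined and $C^1$ because $r_t \geq c_1$. Its gradient is $\nabla_y r_t / r_t$, so its norm is at most $c_1^{-1}C'\exp(-\lambda_1 t)$. Since $M$ is compact and connected, any $y,z$ are joined by a minimizing geodesic of length $d(y,z)$ by Hopf--Rinow. Integrating the gradient along this geodesic gives
\begin{align*}
    \abs{\log r_t(x,y) - \log r_t(x,z)} \leq c_1^{-1}C'\exp(-\lambda_1 t)\, d(y,z),
\end{align*}
which is the claim with $C = c_1^{-1}C'$.

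The main obstacle is regularity. We need uniform boundedness of $\nabla_y r_{T_0}$ and joint continuity of the heat kernel on the compact manifold. This requires smoothness of the heat kernel for the operator $L$ when $U\in C^2$, so that the drift is $C^1$. We invoke standard parabolic regularity here, as the paper already does in Proposition \ref{prop:bgl-8-6-1}. If needed, the finiteness of $\sup_x\norm{r_{T_0}(x,\cdot)}_{L^2}$ can instead be obtained from the uniform upper bound $C_1$ in \eqref{eq:uniform-bdd}.
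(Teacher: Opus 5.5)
Your proof is correct and is essentially the paper's argument. The paper writes $r_t(x,\cdot)=R_{t-1}\big(r_1(x,\cdot)\big)$ and applies Proposition \ref{prop:bgl-8-6-1}, whose proof is exactly your step of bounding $\nabla R_{T_0}$ by $\sup\|\nabla r_{T_0}\|$ times an $L^2$ norm and then using the spectral gap on the centered function; it then divides by the lower bound from \eqref{eq:uniform-bdd} and integrates along a geodesic. You inline that proposition instead of citing it, which makes the uniformity in $x$ explicit through $\sup_x\|r_{T_0}(x,\cdot)-1\|_{L^2(\diffref)}<\infty$, whereas the paper relies on the $x$-independence of the constant in Proposition \ref{prop:bgl-8-6-1} and on $\sup_{x,y} r_1(x,y)<\infty$.
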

\begin{proof}
    Fix $x \in M$. We will apply Proposition \ref{prop:bgl-8-6-1} with  $f(y) = r_{1}(x,y)$. By symmetry of the transition kernel, $R_{t-1}f(z) = r_{t}(x,z)$.
    Let $T_0$ and $C$ be as in the statement of Proposition \ref{prop:bgl-8-6-1}, and without loss of generality let $T_0 \geq 1$. Then, for all $t \geq T_0+1$ and $x \in M$
    \begin{align*}
        \norm{\nabla_{z} r_{t}(x,z)} \leq C \exp(-\lambda_1 (t-1)) \norm{f}_{L^{\infty}(\diffref)}. 
    \end{align*}
    Recall \eqref{eq:uniform-bdd}. Altogether then, there is a constant $K > 0$ such that for all $t \geq T_0+1$, 
    \begin{align*}
        \norm{\nabla_z \log r_t(x,z)} = \norm{\nabla_z r_t(x,z)}/r_t(x,z) \leq K \exp(-\lambda_1 t).
    \end{align*}
    Now, let $\gamma_s : [0,1] \to M$ be a constant speed geodesic from $y$ to $z$, guaranteed to exist by completeness of $M$, then 
    \begin{align*}
        \abs{\log \left(\frac{r_t(x,y)}{r_t(x,z)}\right)} &\leq \int_0^{1} \norm{\nabla_2 \log r_t(x,\gamma_{s})} \norm{\dot{\gamma}_s} ds \leq K \exp(-\lambda_1 t)d(y,z). 
    \end{align*}
\end{proof}

We can now prove Theorem \ref{thm:compact-manifold}.
\begin{proof}[Proof of Theorem \ref{thm:compact-manifold}]
From the definitions of $dP^{T}/dR^{T}$ and $dQ^{T}/dR^{T}$ in \eqref{eq:product-decomp} and \eqref{defn:diffusion-approx-manifold},
\begin{align*}
    \frac{dP^{T}}{dQ^{T}}(x) &=  \frac{a^{T}(x_0)b^{T}(x_T)}{\rho(x_0)\sigma(x_T)}\frac{r_{T/2}(x_T,x_{T/2})}{r_{T/2}(x_T,\Phi_{T/2}(x_{T/2}))}R_{T/2}\sigma(\Phi_{T/2}(x_{T/2})).
\end{align*}
Since the marginal distribution of $x_0$ and $x_{T}$ are the same under both $P_{T}$ and $Q_{T}$, we compute that
\begin{align*}
    H(P_{T}|Q_{T})+H(Q_{T}|P_{T}) &= \left(\Exp{P_{T}}-\Exp{Q_{T}}\right)\left[\log\left(R_{T/2}\sigma(\Phi_{T/2}(x_{T/2}))\right)+\log\left(\frac{r_{T/2}(x_T,x_{T/2})}{r_{T/2}(x_T, \Phi_{T/2}(x_{T/2}))}\right)\right].
\end{align*}
Recall that the first term on the right hand side is controlled by Proposition \ref{prop:first-half-symm-rel-ent}. 

It remains to control the heat kernel term. Observe from Proposition \ref{prop:heat-kernel-log-bdd}, Jensen's inequality, and Proposition \ref{prop:bgl-9-7-2}
\begin{align*}
    \Exp{Q^T}\left[\abs{\log\left(\frac{r_{T/2}(x_T,x_{T/2})}{r_{T/2}(x_T,\Phi_{T/2}(x_{T/2}))}\right)}\right] &\leq \Exp{\diffref}\left[R_{T/2}\rho(x) \cdot K \exp(-\lambda_1 T/2)d(x,\Phi_{T/2}(x))\right] \\
    &= K\exp(-\lambda_1 T/2) \cdot C\exp(-\lambda_1 T/2). 
\end{align*}
The same bound holds for the expression but now under $\mu_{T/2}^{T}$. To see this, 
\begin{align*}
    \Exp{P^T}\left[\abs{\log\left(\frac{r_{T/2}(x_T,x_{T/2})}{r_{T/2}(x_T,\Phi_{T/2}(x_{T/2}))}\right)}\right] &\leq \Exp{\diffref}\left[\frac{d\mu_{T/2}^{T}}{d\diffref} \cdot K \exp(-\lambda_1 T/2)d(x,\Phi_{T/2}(x))\right] \\
    &\leq  K \exp(-\lambda_1 T/2) \norm{\frac{d\mu_{T/2}^{T}}{d\diffref}}_{L^{\infty}(\diffref)}\Exp{\diffref}\left[d(x,\Phi_{T/2}(x))\right].
\end{align*}
Recall that $\norm{d\mu_{T/2}^{T}/d\diffref}_{L^{\infty}(\diffref)}$ stays bounded as $T \to +\infty$ from \eqref{eq:uniform-bdd-potents}. Lastly then, split 
\begin{align*}
    \Exp{\diffref}\left[d(x,\Phi_{T/2}(x))\right] &= \Exp{R_{T/2}^*\mu}[d(x,\Phi_{T/2}(x))]+ \Exp{\diffref}\left[(1-R_{T/2}\rho)d(x,\Phi_{T/2}(x))\right].
\end{align*}
By Jensen's inequality and Proposition \ref{prop:bgl-9-7-2},
\begin{align*}
    \Exp{R_{T/2}^*\mu}[d(x,\Phi_{T/2}(x))] \leq \Was{2}(R_{T/2}^*\mu,R_{T/2}^*\nu)\leq C\exp(-\lambda_1 T/2).
\end{align*}
For the other remaining term, apply Cauchy Schwarz and the spectral decay of $(R_{T/2}\rho - 1)$ to complete the argument (remember that $M$ is bounded). 
\end{proof}

We now establish the tightness of this asymptotic by focusing on the same marginal case.
\begin{proof}[Proof of Theorem \ref{thm:compact-manifold-same}]
    Arguing as in the previous proof, thanks to the fact that $\mu = \nu$ it holds that $\Phi_{T/2} = \Id$ and we just have to consider the log density term. 
    
    \textbf{N.B.} In this proof, we make a different choice for the product decomposition of the $\Schro$ bridge. In  particular, as $\mu = \nu$, we make the symmetric choice of potentials. That is, for each $T > 0$ let $a^{T}: M \to [0,+\infty)$ such that
    \begin{align*}
        \frac{dP^{T}}{dR^{T}} &= a^{T}(x_0)a^{T}(x_{T}). 
    \end{align*}
    Now, the symmetric relative entropy  $H(Q^{T}|P^{T})+H(P^{T}|Q^{T})$ is equal to
\begin{align*}
     \left(\Exp{P_{T}}-\Exp{Q_{T}}\right)\left[\log\left(R_{T/2}\rho\right)(x_{T/2})\right] &= \Exp{\diffref}\left[\left((R_{T/2}a^{T}))^2 - R_{T/2}\rho\right) \log R_{T/2}\rho\right]. 
\end{align*}
Suppose that the following two limits in $L^2(\diffref)$ hold
\begin{align}\label{eq:same-marginal-key-limits}
    \lim\limits_{T \to +\infty} \exp(\lambda_{1}T/2)((R_{T/2}a^{T})^2-R_{T/2}\rho)= \lim\limits_{T \to +\infty} \exp(\lambda_{1}T/2)\log R_{T/2}\rho =  \sum_{i \in \mathcal{I}_1} p_i \psi_i.
\end{align}
As the $(\psi_{i}, i \geq 0)$ form an orthonormal basis of $L^2(\diffref)$, the desired claim follows. We now finish the argument by justifying the limits in \eqref{eq:same-marginal-key-limits}.

We now establish the first limit in \eqref{eq:same-marginal-key-limits}. Let $a^{T}_0 := a^{T} - \int_{M} a^{T} d\diffref$ and $\rho_0 := \rho - 1$ denote the centered versions of $a^T$ and $\rho$, respectively. Next, write the spectral expansion of $a^{T}$ as
\begin{align}
    a^{T} &= \left(\int_{M}a^{T}d\diffref \right)+\sum_{i=1}^{\infty} \alpha_{i}(T) \psi_{i}. 
\end{align}
On the basis of the spectral expansion of $a^T$ and $\rho$, as well as the fact that $a^{T} \to \rho$ in $L^2(\diffref)$ (this is still true despite the different convention from Proposition \ref{prop:schro-limit-facts})
\begin{align}\label{eq:easy-spectral-limits}
    \lim\limits_{T \to +\infty} \exp(\lambda_1 T/2)R_{T/2}\rho_0 = \sum_{i \in \mathcal{I}_1}p_i \psi_{i}, \quad \lim\limits_{T \to +\infty} \exp(\lambda_1 T/2)R_{T/2}a_{0}^{T} = \sum_{i \in \mathcal{I}_1} p_i\psi_{i}.
\end{align}
As $\int_{M} (R_{T/2}a^{T})^2 d\diffref = \int_{M} d\mu_{T/2}^{T} = 1$, the spectral expansion of $a^{T}$ gives that
\begin{align}
    1 &= \norm{R_{T/2}a^{T}}_{L^2(\diffref)}^{2}= \left(\int_{M} a^{T} d\diffref \right)^2+\sum_{i=1}^{\infty} \exp(-\lambda_i T) \alpha_{i}(T)^2. 
\end{align}
This establishes that
\begin{align}\label{eq:spec-lim1}
     \lim\limits_{T \to +\infty} \exp(\lambda_{1}T)\left(\left(\int_{M} a^{T} d\diffref \right)^2 -1 \right) &= -\sum_{i \in \mathcal{I}_1}p_i^2. 
\end{align}
Next, as $\int_{M} a^{T} d\diffref > 0$ a priori, by \eqref{eq:easy-spectral-limits} it holds in $L^2(\diffref)$ that
\begin{align}\label{eq:spec-lim2}
    \lim\limits_{T \to +\infty} \exp(\lambda_{1}T/2) \left(\int_{M}a^{T}d\diffref\right)R_{T/2}a^{T}_0 = 1 \cdot \sum_{i \in \mathcal{I}_1} p_i\psi_i = \sum_{i \in \mathcal{I}_1} p_i\psi_i. 
\end{align}
Lastly, we show in $L^2(\diffref)$ that
\begin{align}\label{eq:hyper-spectral-limit}
    \lim\limits_{T \to +\infty} \exp(\lambda_1 T/2)\norm{(R_{T/2}a_0^{T})^2}_{L^2(\diffref)} = 0.
\end{align}
To see this, we first show that for any $T > 0$, there exists $C_T > 0$ such that the following $L^{2}(\diffref) \to L^4(\diffref)$ bound holds
\begin{align}\label{eq:l2-l4-bdd}
    \norm{R_{T}f}_{L^4(\diffref)} \leq C_{T}\norm{f}_{L^2(\diffref)}.
\end{align}
This follows from the fact that
\begin{align*}
    \abs{R_T f(x)} \leq \left(\int_{M} r_{T}^2(x,y) \diffref(dy)\right)^{1/2} \cdot \norm{f}_{L^2(\diffref)} = \left(\sup \limits_{z} r_{2T}(z,z)\right) ^{1/2}\norm{f}_{L^2(\diffref)}. 
\end{align*}
Hence, it holds that
\begin{align*}
    \norm{R_T f}^{4}_{L^{4}(\diffref)} \leq \norm{R_T f}^2_{L^{\infty}(\diffref)} \norm{R_T f}^2_{L^{2}(\diffref)} \leq \left(\sup \limits_{z} r_{2T}(z,z)\right)\norm{f}_{L^2(\diffref)}^{4}. 
\end{align*}
This establishes \eqref{eq:l2-l4-bdd} with $C_T = \left(\sup \limits_{z} r_{2T}(z,z)\right)^{1/4}$. Fix some $T_0 > 0$, then  whenever $T/2 > T_0$
\begin{align*}
    \norm{(R_{T/2}a_0^{T})^2}_{L^2(\diffref)} &= \norm{R_{T/2}a_0^{T}}_{L^4(\diffref)}^2 \leq C_{T_0}^2\norm{R_{(T/2)-T_0}a_0^{T}}_{L^2(\diffref)}^2 \leq C_{T_0}^2\exp(-\lambda_1 (T-2T_0))\norm{a_0^{T}}_{L^2(\diffref)}^2.
\end{align*}

As $(a^{T}, T> 0)$ has an $L^2(\diffref)$ limit, this establishes \eqref{eq:hyper-spectral-limit}. Finally then, by \eqref{eq:easy-spectral-limits}, \eqref{eq:spec-lim1}, \eqref{eq:spec-lim2}, and \eqref{eq:hyper-spectral-limit}, we see in $L^2(\diffref)$ that
\begin{align*}
    &\lim\limits_{T \to +\infty}\exp(\lambda_{1}T/2)\left(((R_{T/2}a^{T})^2-R_{T/2}\rho)\right)\\
    &= \lim\limits_{T \to +\infty}\exp(\lambda_{1}T/2)\left((R_{T/2}a^{T}_0)^2 + 2\left(\int_M a^{T}d\diffref \right)R_{T/2}a^{T}_0+\left(\int_M a^{T}d\diffref \right)^2-1 - R_{T/2}\rho_0\right)\\
    &= \sum_{i \in \mathcal{I}_1} p_{i}\psi_{i},
\end{align*}
establishing the first limit in \eqref{eq:same-marginal-key-limits}.

Next, we establish the second limit in \eqref{eq:same-marginal-key-limits}. The following pointwise inequality holds
\begin{align}
    \abs{\log R_{T/2}\rho - \left(R_{T/2}\rho - 1\right)} \leq \frac{1}{R_{T/2}\rho}\left(R_{T/2}\rho - 1\right)^2. 
\end{align}
By \eqref{eq:uniform-bdd}, for all $T$ large enough it holds that $R_{T/2}\rho > c$ for some $c > 0$. Repeating the same argument for \eqref{eq:hyper-spectral-limit} establishes that
\begin{align}\label{eqref:hyper-contrac-density}
    \lim\limits_{T \to +\infty} \exp(\lambda_1 T/2)\norm{(R_{T/2}\rho - 1)^2}_{L^2(\diffref)} = 0.
\end{align}
Hence, thanks to the first limit in \eqref{eq:easy-spectral-limits} the second limit in \eqref{eq:same-marginal-key-limits} holds. 
\end{proof}

\subsection{Euclidean Proofs}
First, we establish the Gaussian calculation referenced after the statement of Theorem \ref{thm:euclidean-case}.
\begin{proposition}\label{prop:euc-gaussian-comp}
    In the setting of Section \ref{sec:ou-diff}, let $\mu = N(0,1)$ and $\nu = N(0,\eta^2)$. Then $P^{T}$ and $Q^{T}$ can be explicitly computed, and setting $\pi^{T} = (x_0,x_{T})_{\#}P^{T}$ and $q^{T} = (x_0,x_{T})_{\#}Q^{T}$ we compute
    \begin{align*}
        \lim\limits_{T \to +\infty }\exp(2T)\left(H(\pi^{T}|q^{T})+H(q^{T}|\pi^{T})\right) < +\infty.
    \end{align*}
    Hence, the rate for the endpoint distributions suggested by Theorem \ref{thm:euclidean-case} is not in general tight.
\end{proposition}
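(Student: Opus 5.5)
The plan is to reduce everything to the off-diagonal entries of $\Sigma_1$ and $\Sigma_2$, since $\pi^{T}$ and $q^{T}$ are centered bivariate Gaussians with the same diagonal $(1,\eta^2)$.

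\textbf{Step 1: the two covariance matrices.} For $\pi^{T}$ I would take the closed-form static entropic OT / $\Schro$ bridge between Gaussians from \cite{janati2020,zhang-sb-ouformula-2026,bunne-sbformula-26}. Equivalently, I would solve the Gaussian $\Schro$ system for quadratic potentials $a^{T},b^{T}$. Either route gives the cross-covariance $c_T$ as the root of a quadratic, which is the stated formula.

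For $q^{T}$ I would compute directly from the triplet \eqref{eq:triplet-ou}, writing $\varepsilon := e^{-T/2}$. Since $\mu=\diffref$, $R^*_{T/2}\mu=N(0,1)$ and $\Cov(x_0,x_{T/2})=e^{-T/4}$. Next, $R^*_{T/2}\nu = N(0,v)$ with $v = 1+\varepsilon(\eta^2-1)$, so $\nabla\varphi_{T/2}(x)=\sqrt v\,x$. The reversed OU step from $\nu$ gives $\mathbb{E}[x_T\mid y]= e^{-T/4}\eta^2 y/v$. Composing by the tower property yields $d_T := \Cov(x_0,x_T)= \varepsilon\eta^2/\sqrt{v}$, matching $\Sigma_2$.

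\textbf{Step 2: symmetric relative entropy.} For centered Gaussians,
\begin{align*}
H(\pi^T|q^T)+H(q^T|\pi^T)=\tfrac12\left(\Tr(\Sigma_2^{-1}\Sigma_1)+\Tr(\Sigma_1^{-1}\Sigma_2)\right)-2 .
\end{align*}
With common diagonal, $\Tr(\Sigma_2^{-1}\Sigma_1)=2(\eta^2-c_Td_T)/(\eta^2-d_T^2)$, and symmetrically for the other trace. A short rearrangement gives
\begin{align*}
(c_T-d_T)\left(\frac{c_T}{\eta^2-c_T^2}-\frac{d_T}{\eta^2-d_T^2}\right)=\frac{(c_T-d_T)^2(\eta^2+c_Td_T)}{(\eta^2-c_T^2)(\eta^2-d_T^2)}.
\end{align*}
Since $c_T,d_T\to 0$, this is $\sim (c_T-d_T)^2/\eta^2$. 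It therefore suffices to show $c_T-d_T=O(\varepsilon^2)=O(e^{-T})$.

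\textbf{Step 3: expansion (the only real work).} I would Taylor expand the square root in $c_T$:
\begin{align*}
\sqrt{(1-\varepsilon^2)^2+4\varepsilon^2\eta^2}=(1-\varepsilon^2)+\frac{2\varepsilon^2\eta^2}{1-\varepsilon^2}+O(\varepsilon^4).
\end{align*}
This gives $c_T=\varepsilon\eta^2+O(\varepsilon^3)$. Meanwhile $d_T=\varepsilon\eta^2\bigl(1+\varepsilon(\eta^2-1)\bigr)^{-1/2}=\varepsilon\eta^2-\tfrac12\varepsilon^2\eta^2(\eta^2-1)+O(\varepsilon^3)$. Hence $c_T-d_T=\tfrac12\eta^2(\eta^2-1)e^{-T}+O(e^{-3T/2})$.

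Consequently $e^{2T}$ times the symmetric entropy converges to $\eta^2(\eta^2-1)^2/4$, which is finite and vanishes exactly when $\eta^2=1$. The main obstacle is bookkeeping. One must get the OU time scaling right, where the correlation over time $T/2$ is $e^{-T/4}$, and carry the expansions to second order so the leading terms cancel correctly.
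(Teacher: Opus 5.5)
Your proposal is correct and follows the paper's proof essentially step for step. Like the paper, you take $\Sigma_1$ from the closed-form Gaussian $\Schro$ bridge and compute $\Sigma_2$ from the triplet using $\nabla\varphi_{T/2}(x)=\sqrt{v}\,x$. You then arrive at the same closed form $\frac{(c_T-d_T)^2(\eta^2+c_Td_T)}{(\eta^2-c_T^2)(\eta^2-d_T^2)}$ and the same second-order expansions of $c_T$ and $d_T$. Your explicit leading term $c_T-d_T=\tfrac12\eta^2(\eta^2-1)e^{-T}+O(e^{-3T/2})$, and hence the limiting value $\eta^2(\eta^2-1)^2/4$, is a small sharpening: the paper only records the $O(e^{-2T})$ order, whereas you also show that the limit in the statement exists.
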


\begin{proof}
    In this proof, we will justify the claim that $\pi^{T} \sim N(0,\Sigma_{1})$ and $q_{T} \sim N(0,\Sigma_2)$ with
    \begin{align}
        \Sigma_{1} = \begin{pmatrix} 1 & c_{T} \\ c_{T} & \eta^2 \end{pmatrix}, \quad \Sigma_2 = \begin{pmatrix} 1 & \frac{e^{-T/2}\eta^2}{\sqrt{1+e^{-T/2}(\eta^2-1)}} \\ \frac{e^{-T/2}\eta^2}{\sqrt{1+e^{-T/2}(\eta^2-1)}} & \eta^2 \end{pmatrix},
    \end{align}
    where $c_{T} = \frac{1}{2e^{-T/2}}\left(\sqrt{(1-e^{-T})^2+4e^{-T}\eta^2}-(1-e^{-T})\right)$.
    
    To compute the covariance matrix of $\pi^{T}$, we follow the calculation developed in \cite[Appendix A.4]{zhang-sb-ouformula-2026}. We note that similar explicit $\Schro$ bridge calculations are provided in \cite{bunne-sbformula-26,janati2020}. Let $(X_0,X_{T}) \sim \pi^{T}$, then the key quantity to compute is $\Cov(X_0,X_T)$. From the change of variables outlined in \cite[Appendix A.4, eqn. (70-72)]{zhang-sb-ouformula-2026}, we define $\bar{X}_0 = (e^{-T/2}/\sqrt{1-e^{-T}})X_0$ and $\bar{X}_T = (1/\sqrt{1-e^{-T}})X_T$. In their notation, we set $\mathcal{A} = 1$, $\mathcal{B} = \eta^2$, $a = b = 0$, $m = 0$, $A = \frac{1}{2}$, $\sigma = 1$, and $\Sigma_{T} = 1-e^{-T}$. Altogether, \cite[eqn. (64)]{zhang-sb-ouformula-2026} gives that
    \begin{align*}
        \Cov(\bar{X}_0,\bar{X}_T) &= \frac{1}{2(1-e^{-T})}\left(\sqrt{4\eta^2 e^{-T} + (1-e^{-T})^2} - (1-e^{-T})\right).
    \end{align*}
    By the bilinearity of covariance, it follows that $c_{T} = \Cov(X_0,X_T)$. 
    
    To compute the covariance matrix of $q^{T}$, we simply compute the covariance of the triplet $(X_0,X_{T/2},X_{T})$ defined in \eqref{eq:triplet-ou}. Let $(Y_t, t \geq 0)$ denote a solution to \eqref{eq:OU} with $Y_0 \sim N(0,\eta^2)$, then it holds that
    \begin{align*}
        \begin{pmatrix}
            X_0 \\ X_{T/2}
        \end{pmatrix} \sim 
        N\left(\begin{pmatrix} 0 \\ 0 \end{pmatrix},\begin{pmatrix} 1 & e^{-T/4} \\ e^{-T/4} & 1 \end{pmatrix} \right), 
        \begin{pmatrix}
            Y_0 \\ Y_{T/2}
        \end{pmatrix} \sim 
        N\left(\begin{pmatrix} 0 \\ 0 \end{pmatrix},\begin{pmatrix} \eta^2 & \eta^2 e^{-T/4} \\ \eta^2 e^{-T/4} & 1 + (\eta^2 -1)e^{-T/2} \end{pmatrix} \right).
    \end{align*}
    As $X_{T/2} \sim N(0,1)$ and $Y_{T/2} \sim N(0,1+(\eta^2-1)e^{-T/2})$, it follows that $\nabla \varphi_{T/2}(x) := \sqrt{1+(\eta^2-1)e^{-T/2}}x$. Next, by construction the conditional distribution $X_{T}|(X_{T/2},X_0)$ is equal to 
    \begin{align*}
        Y_{0}|(Y_{T/2} = \nabla \varphi_{T/2}(X_{T/2})) \sim N\left(\frac{\eta^2 e^{-T/4}}{\sqrt{1+(\eta^2-1)e^{-T/2}}}X_{T/2}, \frac{\eta^2(1- e^{-T/2})}{1+(\eta^2-1)e^{-T/2}}\right)
    \end{align*}
    Altogether, the triplet $(X_0,X_{T/2},X_{T})$ is a centered Gaussian vector with covariance matrix
    \begin{align*}
        \begin{pmatrix}
            1 & e^{-T/4} & \frac{e^{-T/2}\eta^2}{\sqrt{1+e^{-T/2}(\eta^2-1)}} \\
            e^{-T/4} & 1 & \frac{e^{-T/4}\eta^2}{\sqrt{1+e^{-T/2}(\eta^2 - 1)}} \\ \frac{e^{-T/2}\eta^2}{\sqrt{1+e^{-T/2}(\eta^2-1)}} & \frac{e^{-T/4}\eta^2}{\sqrt{1+e^{-T/2}(\eta^2 - 1)}} & \eta^2
        \end{pmatrix}.
    \end{align*}

We finally proceed to the relative entropy calculation. With $c_{T}$ defined above, define in addition $d_T =  \frac{e^{-T/2}\eta^2}{\sqrt{1+e^{-T/2}(\eta^2-1)}}$. Using the formula for the relative entropy between Gaussians, we compute for all $T > 0$ that
    \begin{align*}
        H(\pi^{T}|q^{T})+H(q^{T}|\pi^{T}) &= \frac{1}{2}\Tr(\Sigma_{1}^{-1}\Sigma_{2})+\frac{1}{2}\Tr(\Sigma_{2}^{-1}\Sigma_{1})-2\\
        &= \frac{(c_T-d_T)^2(\eta^2 + c_T d_T)}{(\eta^2-c_{T}^2)(\eta^2-d_{T}^2)}.
    \end{align*}
    It now remains to justify that the above quantity is $O(\exp(-2T))$. First, observe that the Taylor expansion of $\sqrt{(1-x)^2+4\eta^2 x}$ about $x = 0$ is given by $1 + (2\eta^2 -1)x+O(x^2)$. Hence, for large $T$ it holds that
    \begin{align}\label{eq:ct-asym}
        c_{T} &= \frac{1}{2e^{-T/2}}\left(1+(2\eta^2 - 1)e^{-T}+O(e^{-2T})-(1-e^{-T})\right) = \eta^2 e^{-T/2}+O(e^{-3T/2}). 
    \end{align}
    Similarly, from the expansion $(1+x)^{-1/2} = 1 - \frac{1}{2}x+ O(x^2)$ for $\abs{x} < 1$, we obtain that
    \begin{align}\label{eq:dt-asymp}
        d_{T} = \eta^{2}e^{-T/2}\left(1-\frac{1}{2}e^{-T/2}(\eta^2 - 1) + O(e^{-T})\right).
    \end{align}
    From \eqref{eq:ct-asym} and \eqref{eq:dt-asymp}, we deduce that the decay in the symmetric relative entropy is governed by $(c_T-d_T)^2$, and we further see that this quantity is $O(\exp(-2T))$.
\end{proof}

Next, we prove Proposition \ref{prop:prop-of-log-conc}.
\begin{proof}[Proof of Proposition \ref{prop:prop-of-log-conc}]
    Let $p$ be the Lebesgue density of $\mu$, then $p$ is bounded and compactly supported. Let $X \sim \mu$ and $Z \sim N(0,\Id)$ be independent, then 
    \begin{align*}
        e^{-t/2}\left(X+\sqrt{e^{t}-1}Z\right) \sim R_{t}^*\mu.
    \end{align*}
    Let $(p_t(\cdot,\cdot), t > 0)$ denote the standard Brownian transition densities, i.e. $p_t(x,\cdot)$ is the Lebesgue density of the $N(x,t\Id)$ random variable. Let $(P_t, t \geq 0)$ denote the corresponding semigroup. The Lebesgue density of $R_{t}^*\mu$ is equal to $e^{dt/2}P_{e^{t}-1}p(e^{t/2}x)$. Let $R> 0$ be such that $\mathrm{supp}(p)\subset B(0,R)$. By the calculations in \cite[Example 2.3]{brigati-heatflow25}, see also \cite[Section 2.1]{gozlan-fi-compact18}, for each $t > 0$
    \begin{align*}
        \frac{1}{t}\left(1-\frac{R^2}{t}\right)\Id \leq -\nabla^2 \log P_{t}p \leq \frac{1}{t}\Id. 
    \end{align*} 
    With the \eqref{eq:OU} rescaling, it holds that
    \begin{align*}
        \frac{e^{t}}{e^{t}-1}\left(1-\frac{R^2}{e^{t}-1}\right) \leq -\nabla^2 \log \left(\frac{dR_{t}^*\mu}{d\mathrm{Leb}}\right) \leq \frac{e^{t}}{e^{t}-1}\Id.
    \end{align*}
    As such, there exist $0 < \alpha < \beta$ and $T_0$ such that for all $T \geq T_0$, 
    \begin{align*}
        \alpha \Id \leq -\nabla^2 \log \left(\frac{dR_{T/2}^*\mu}{d\mathrm{Leb}}\right),-\nabla^2 \log \left(\frac{dR_{T/2}^*\nu}{d\mathrm{Leb}}\right) \leq \beta \Id.
    \end{align*}
    By the Caffarelli Contraction Theorem \cite{THEcaffarelli}, for example as stated in \cite[Theorem 1]{chewi2022entropic}, the proposition follows with $C = \sqrt{\beta/\alpha}$. 

    For the global pointwise bound, by the Mean Value Theorem it holds for all $T \geq T_0$ that
    \begin{align*}
        \norm{\nabla \varphi_{T/2}(x)} \leq \norm{\nabla \varphi_{T/2}(0)}+C\norm{x}.
    \end{align*}
    Thus, the global pointwise bound holds once we establish that $\norm{\nabla \varphi_{T/2}(0)}$ is bounded for all $T$ large enough. To see this, observe again from the Mean Value Theorem
    \begin{align*}
        \norm{\nabla \varphi_{T/2}(0)} \leq \norm{\nabla \varphi_{T/2}(x)} + C \norm{x}, \quad \text{for all $T \geq T_0$ and $x \in \mathbb{R}^{d}$}.  
    \end{align*}
    Integrating the inequality with respect to $R_{T/2}^*\mu$ to get that
    \begin{align*}
        \norm{\nabla \varphi_{T/2}(0)} &\leq \int_{\mathbb{R}^{d}} \norm{y} R_{T/2}^*\nu(dy) + C \int_{\mathbb{R}^{d}} \norm{x} R_{T/2}^*\mu(dx). 
    \end{align*}
    By Assumption \ref{assumption}, $dR_{T/2}^*\mu/d\diffref = R_{T/2}\rho$ and $dR_{T/2}^*\nu/d\diffref=R_{T/2}\sigma$ are uniformly bounded as $T \to +\infty$. Thus, as $\diffref$ has finite moments, this gives the desired pointwise bound. 
\end{proof}

Next, we prove the following heat kernel bound in the \eqref{eq:OU} setting, similar to Proposition \ref{prop:heat-kernel-log-bdd}.
\begin{proposition}\label{prop:ou-density-ratio}
    Let $(r_{t}(\cdot,\cdot), t \geq 0)$ denote the transition kernel of the OU process defined in \eqref{eq:ou-trans-dens}. Under Assumption \ref{assumption},
    \begin{align*}
        \limsup\limits_{T \to +\infty} \exp(T/2)\abs{\left(\Exp{P_T}-\Exp{Q_{T}}\right) \left[\log\left(\frac{r_{T/2}(x_T,x_{T/2})}{r_{T/2}(x_T,\nabla \varphi_{T/2}(x_{T/2}))}\right)\right]} < +\infty. 
    \end{align*}
\end{proposition}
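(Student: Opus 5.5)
The plan is to exploit the explicit Gaussian form \eqref{eq:ou-trans-dens} of the OU kernel, which turns the log-ratio into an affine function of $x_T$. Write $t=T/2$, $z=x_{T/2}$ and $g_T(z):=z-\nabla\varphi_{T/2}(z)$. Directly from \eqref{eq:ou-trans-dens},
\begin{align*}
\log\frac{r_{t}(x_T,z)}{r_t(x_T,\nabla\varphi_{T/2}(z))} = \frac{e^{T/4}\langle x_T, g_T(z)\rangle}{e^{T/2}-1} - \frac{\norm{z}^2-\norm{\nabla\varphi_{T/2}(z)}^2}{2(e^{T/2}-1)}.
\end{align*}
I will control the two pieces separately after multiplying by $\exp(T/2)$.

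\textbf{The quadratic piece.} This piece depends only on $x_{T/2}$. Its law is $\mu^T_{T/2}$ under $P^T$ and $R^*_{T/2}\mu$ under $Q^T$, and both have densities with respect to $\diffref$ bounded uniformly in large $T$, by \eqref{eq:uniform-bdd-potents} and $\rho\in L^\infty$. By Proposition \ref{prop:prop-of-log-conc}, $\norm{\nabla\varphi_{T/2}(z)}\le C(1+\norm{z})$. Hence each expectation of $\norm{z}^2-\norm{\nabla\varphi_{T/2}(z)}^2$ is bounded by a constant times $\Exp{\diffref}[1+\norm{z}^2]$. After multiplying by $e^{T/2}/(e^{T/2}-1)$, this piece is $O(1)$.

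\textbf{The cross term.} The prefactor is $e^{T/4}\cdot e^{T/2}/(e^{T/2}-1)\sim e^{T/4}$, so I need
\begin{align*}
(\Exp{P^T}-\Exp{Q^T})\big[\langle x_T,g_T(x_{T/2})\rangle\big]=O(e^{-T/4}).
\end{align*}
A crude Cauchy--Schwarz bound does not suffice, because $\Exp{\diffref}\norm{g_T}^2$ is only small at a weaker rate. Instead I will condition on $x_{T/2}=z$.
\begin{itemize}
\item Under $P^T$, the conditional law of $x_T$ has $\diffref$-density proportional to $b^T(y)r_{t}(z,y)$.
\item Under $Q^T$, it has $\diffref$-density $\sigma(y)r_t(\nabla\varphi_{T/2}(z),y)/R_t\sigma(\nabla\varphi_{T/2}(z))$.
\end{itemize}
In both cases $y$ ranges over the compact set $\mathrm{supp}(\sigma)$. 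I claim that the two conditional means are $m_\nu+O(e^{-T/4}(1+\norm{z}))$, where $m_\nu$ is the mean of $\nu$. The argument has three ingredients.
\begin{enumerate}
\item For $\norm{y}\le R$, the kernel satisfies $r_t(w,y)=c_t\,\exp\big(\langle w,y\rangle e^{T/4}/(e^{T/2}-1)\big)\,h_t(w)\,k_t(y)$. The $w$-dependent factors cancel in the normalization, and $k_t(y)=1+O(e^{-T/2})$.
\item The normalized conditional law is therefore a tilt of $b^T\diffref$ (resp.\ $\sigma\diffref$) by $\exp(O(e^{-T/4})\langle w,y\rangle)$. Its mean differs from that of $b^T\diffref/\norm{b^T}_{L^1}$ (resp.\ $\nu$) by $O(e^{-T/4}\norm{w})$ whenever $e^{-T/4}\norm{w}\le 1$, and by $O(1)$ always, since $y$ is bounded. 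Both cases are covered by $O(e^{-T/4}\norm{w})$.
\item The mean of $b^T\diffref$ converges to $m_\nu$ at rate $O(e^{-T/2})$. This follows from the $L^2$ rate in Proposition \ref{prop:schro-limit-facts}, using the spectral argument of Theorem \ref{thm:next-term-entropic-cost} and the relation $\sigma=b^TR_Ta^T$.
\end{enumerate}
Here $w=z$ or $w=\nabla\varphi_{T/2}(z)$, and the latter has linear growth.

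\textbf{Assembling the cross term.} Substituting the conditional means gives
\begin{align*}
\Big\langle m_\nu,\ \Exp{\diffref}\Big[g_T\Big(\tfrac{d\mu^T_{T/2}}{d\diffref}-R_{T/2}\rho\Big)\Big]\Big\rangle + O\Big(e^{-T/4}\,\sup_{\text{both}}\mathrm{E}\big[(1+\norm{z})\norm{g_T(z)}\big]\Big).
\end{align*}
The remainder is $O(e^{-T/4})$, using only the linear growth of $g_T$ together with bounded densities and Gaussian moments. For the main term I apply Cauchy--Schwarz in $L^2(\diffref)$.
\begin{itemize}
\item $\norm{g_T}_{L^2(\diffref)}$ is bounded, again by linear growth.
\item $\norm{d\mu^T_{T/2}/d\diffref-1}_{L^2}=O(e^{-T/4})$ by Proposition \ref{prop:chi-sqrd-bdd} with $\lambda_1=1/2$.
\item $\norm{R_{T/2}\rho-1}_{L^2}=O(e^{-T/4})$ by the spectral decay of $R_{T/2}\rho$.
\end{itemize}
Together these give the required $O(e^{-T/4})$ for the cross term, completing the bound.

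The main obstacle is the uniform conditional-mean estimate in the third paragraph. The kernel perturbation $e^{-T/4}\langle w,y\rangle$ is not uniformly small on $\mathbb{R}^d$, so the compact support of $\sigma$ must be used to tilt only a bounded variable. The $P^T$-side normalization by $R_t b^T(z)$ also needs uniform positivity, which I would take from the bounds in Proposition \ref{prop:schro-limit-facts} together with $\sigma=b^TR_Ta^T$.
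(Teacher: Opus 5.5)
Your proof is correct, but it takes a genuinely different route from the paper's.

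\textbf{How the paper argues.} The paper never uses cancellation between $\Exp{P^T}$ and $\Exp{Q^T}$. It shows that each expectation of the log-ratio is separately $O(e^{-T/2})$.

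Under $Q^T$, the law of $x_{T/2}$ is exactly $R_{T/2}^*\mu$ and the map is optimal. So Cauchy--Schwarz together with $\Exp{Q^T}\norm{x_{T/2}-\nabla\varphi_{T/2}(x_{T/2})}^2=\Was{2}^2(R_{T/2}^*\mu,R_{T/2}^*\nu)=O(e^{-T/2})$ (Proposition \ref{prop:bgl-9-7-2}) already suffices on that side.

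Under $P^T$, the paper dominates the law of $(x_{T/2},x_T)$ by $C\,r_{T/2}\,\diffref\otimes\diffref$. It then splits $r_{T/2}=1+(r_{T/2}-1)$ and uses $\norm{r_{T/2}-1}_{L^2(\diffref\otimes\diffref)}=O(e^{-T/4})$. This approximate independence reduces the cross term to $\Exp{\diffref}\norm{x_T}\cdot\Exp{\diffref}\norm{x-\nabla\varphi_{T/2}(x)}$, and the displacement is $O(e^{-T/4})$ in $L^1(\diffref)$. So the obstacle you identify (only weak $L^2$-smallness of $g_T$ under $\mu^T_{T/2}$) is bypassed by passing to $L^1(\diffref)$-smallness of $g_T$, not by conditioning.

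\textbf{What your route uses instead.} You use no smallness of $g_T$ at all, only its linear growth. Consequently you need neither the Wasserstein decay nor the Hilbert--Schmidt bound. The price is twofold. First, the compact support of $\sigma$ enters structurally, since the tilt must act on a bounded variable. Second, you need a rate for $b^T\to\sigma$, which Proposition \ref{prop:schro-limit-facts} states only qualitatively in the OU case. Your derivation does supply it: $\norm{R_Ta^T-1}_{L^2(\diffref)}\le\abs{\int a^T\,d\diffref-1}+e^{-T/2}\norm{a^T}_{L^2(\diffref)}=O(e^{-T/2})$ by \eqref{eq:limit-l1-norm-aT}.

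The paper's proof is shorter. Yours is more informative: it identifies the conditional mean of $x_T$ given $x_{T/2}=z$ as $m_\nu+O(e^{-T/4}(1+\norm{z}))$ under both measures. For this proposition it would also go through with any transport map of linear growth, optimal or not.

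\textbf{A correction to your closing remark.} Uniform positivity of $R_{T/2}b^T(z)$ in $z$ is false on $\mathbb{R}^d$, because it carries the factor $h_t(z)=\exp\big(-\norm{z}^2/(2(e^{T/2}-1))\big)\to 0$ as $\norm{z}\to\infty$. You do not need it, however:
\begin{itemize}
\item The factor $h_t$ cancels in the conditional law.
\item Take $\mathrm{supp}(\sigma)\subset B(0,R)$ and use $\norm{b^T}_{L^1(\diffref)}=1$. Then the remaining normalizer $\int b^T k_t\exp\big(\langle w,\cdot\rangle e^{T/4}/(e^{T/2}-1)\big)\,d\diffref$ is bounded below by $\tfrac12 e^{-2R}$ for $T$ large, whenever $e^{-T/4}\norm{w}\le 1$.
\item In the complementary regime, both means lie in $B(0,R)$. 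The trivial bound $2R$ on their discrepancy needs no control of the normalizer.
\end{itemize}
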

\begin{proof} 
    Recall that for \eqref{eq:OU}, $\lambda_1 = \frac{1}{2}$. Computing directly from \eqref{eq:ou-trans-dens},
    \begin{align}\label{eq:log-ou-ratio}
        \log\left(\frac{r_{T/2}(x,y)}{r_{T/2}(x,z)}\right) &= \frac{e^{-T/4}}{2(1-e^{-T/2})}\langle 2x-e^{-T/4}(y+z),y-z\rangle.
    \end{align}
    By Proposition \ref{prop:prop-of-log-conc}, there is $C > 0$ such that $\norm{\nabla \varphi_{T/2}(x)} \leq C(1+\norm{x})$ for all $T$ large enough. It then holds for all $T$ large enough that
    \begin{align*}
        \Exp{Q^{T}}\left[\norm{2x_{T}-e^{-T/4}(x_{T/2}+\nabla \varphi_{T/2}(x_{T/2}))}^2\right] \leq 4 \left(\int \norm{x}^2 d\nu + 4 \exp(-T/2)\left(\int \norm{x}^2+C(1+\norm{x})^2 \right)dR_{T/2}^*\mu\right),
    \end{align*}
    which stays bounded as $T \to +\infty$. By Cauchy-Schwarz and Proposition \ref{prop:bgl-9-7-2}, there exists $K > 0$ such that for all $T$ large enough
    \begin{align*}
        \abs{\Exp{Q_{T}}\left[\log\left(\frac{r_{T/2}(x_T,x_{T/2})}{r_{T/2}(x_T,\nabla \varphi_{T/2}(x_{T/2}))}\right)\right]} \leq K\exp(-T/4)\Was{2}(R_{T/2}^*\mu,R_{T/2}^*\nu) \leq K\exp(-T/2)\Was{2}(\mu,\nu).
    \end{align*}

    For the $P^{T}$ term, observe that under $P^T$, $(x_{T/2},x_{T})$ is distributed according to
    \begin{align*}
        R_{T/2}a^{T}(x_{T/2})b^{T}(x_{T})r_{T/2}(x_{T/2},x_{T})(\diffref \otimes \diffref)(dx_{T/2}dx_{T}).
    \end{align*}
    By \eqref{eq:uniform-bdd-potents}, there exists $T_0, C > 0$ such that for all $T \geq T_0$, the joint density of $(x_{T/2},x_{T})$ under $P^T$ is dominated by $C r_{T/2}(x_{T/2},x_{T})(\diffref \otimes \diffref)(dx_{T/2}dx_{T})$. 
    Next, observe from \eqref{eq:log-ou-ratio} that
    \begin{align*}
        \abs{\log\left(\frac{r_{T/2}(x,y)}{r_{T/2}(x,z)}\right)} \leq \exp(-T/4)\norm{x}\norm{y-z} + \exp(-T/2)\norm{y+z}\norm{y-z}
    \end{align*}    
    Combining these facts gives that
    \begin{align*}
        &\abs{\Exp{P_{T}}\left[\log\left(\frac{r_{T/2}(x_T,x_{T/2})}{r_{T/2}(x_T,\nabla \varphi_{T/2}(x_{T/2}))}\right)\right]} \\
        &\leq C\exp(-T/4)\int_{\mathbb{R}^{d} \times \mathbb{R}^{d}} \norm{x_T}\norm{x_{T/2}-\nabla \varphi_{T/2}(x_{T/2})}r_{T/2}(x_T,x_{T/2})(\diffref \otimes \diffref)(dx_{T/2}dx_{T})\\ &+C \exp(-T/2)\int_{\mathbb{R}^{d} } \norm{x+\nabla \varphi_{T/2}(x)}\norm{x-\nabla \varphi_{T/2}(x)}d\diffref(x).
    \end{align*}
    Thanks to the uniform in $T$ linear-growth bounds of Proposition \ref{prop:prop-of-log-conc} and the fact that $\diffref$ has finite moments of all orders, last term in the above display decays like $\exp(-T/2)$. Thus, it remains to show that the first integral in the above upper bound decays like $\exp(-T/4)$. Recall from the spectral expansion of $r_{T/2}(\cdot,\cdot)$ in $L^2(\diffref \otimes \diffref)$ that there is a $C > 0$ such that for all $T > 0$ large enough 
    \begin{align*}
        \norm{r_{T/2}(x,y) - 1}_{L^2(\diffref \otimes \diffref)} \leq C\exp(-T/4). 
    \end{align*}
    This then implies that
    \begin{align*}
        &\abs{\int_{\mathbb{R}^{d} \times \mathbb{R}^{d}} \norm{x_T}\norm{x_{T/2}-\nabla \varphi_{T/2}(x_{T/2})}(r_{T/2}(x_T,x_{T/2})-1)(\diffref \otimes \diffref)(dx_{T/2}dx_{T})}\\
        &\leq \left(\int_{\mathbb{R}^{d} \times \mathbb{R}^{d}} \norm{x_T}^2 \norm{x_{T/2}-\nabla \varphi_{T/2}(x_{T/2})}^{2}(\diffref \otimes \diffref)(dx_{T/2}dx_{T})\right)^{1/2}\norm{r_{T/2}(x_T,x_{T/2})-1}_{L^2(\diffref \otimes \diffref)}.
    \end{align*}
    The integral stays bounded as $T \to +\infty$, and the heat kernel term provides the $\exp(-T/4)$ decay. Altogether then, the argument is completed once we establish the existence of some $C > 0$ such that for all $T$ large enough,
    \begin{align}\label{eq:last-claim}
        \int_{\mathbb{R}^{d} \times \mathbb{R}^{d}} \norm{x_T}\norm{x_{T/2}-\nabla \varphi_{T/2}(x_{T/2})}(\diffref \otimes \diffref)(dx_{T/2}dx_{T}) \leq C \exp(-T/4).
    \end{align}
    But this follows from standard spectral arguments. Indeed, note that since $x_{T/2}$ and $x_{T}$ are independent under $\diffref \otimes \diffref$ and $\diffref$ has finite moments, it remains to analyze the convergence rate of
    \begin{align}\label{eq:last-thing-to-bdd}
        \int_{\mathbb{R}^{d}} \norm{x-\nabla \varphi_{T/2}(x)}\diffref(dx).
    \end{align}
    But now we apply the same maneuver as in the proof of Theorem \ref{thm:compact-manifold}. Namely, we bound \eqref{eq:last-thing-to-bdd} with
    \begin{align*}
        \int_{\mathbb{R}^{d}} \norm{x-\nabla \varphi_{T/2}(x)}\cdot \abs{R_{T/2}\rho(x) - 1}\diffref(dx) + \Exp{R_{T/2}\mu^*}\left[\norm{x-\nabla \varphi_{T/2}(x)}\right]
    \end{align*}
    By Jensen's inequality and Proposition \ref{prop:bgl-9-7-2}, the second term decays with rate $\exp(-T/4)$. For the first term, apply Cauchy Schwarz and use the spectral decay of $R_{T/2}\rho$ to get a rate of $\exp(-T/4)$. This establishes \eqref{eq:last-claim}, completing the argument. 
\end{proof}

Finally then Theorem \ref{thm:euclidean-case} is established in the following manner.
\begin{proof}[Proof of Theorem 2]
    Just as in the proof of Theorem \ref{thm:compact-manifold}, the sum of the two relative entropy quantities is the sum of two terms. The first term is  controlled by Proposition \ref{prop:first-half-symm-rel-ent}, and the remaining heat kernel term is bounded by Proposition \ref{prop:ou-density-ratio}.
\end{proof}

\bibliographystyle{alpha}
\bibliography{sample}

\end{document}